\documentclass{amsart}
\usepackage{setspace}
\numberwithin{equation}{section}
\usepackage{graphicx}
\usepackage{caption}
\usepackage{subcaption}
\usepackage{amssymb}
\usepackage{enumerate, xspace}
\usepackage{dsfont}
\usepackage{tikz}
\usepackage{todonotes}
\usepackage{lscape}
\usetikzlibrary{decorations.pathreplacing,calligraphy}
\usepackage[bottom,first]{draftcopy}
\usepackage{lipsum}
\usepackage[normalem]{ulem}
\usepackage[colorlinks]{hyperref}
\usepackage[graphicx]{realboxes}

\usepackage{siunitx}
\usepackage{datax}
\hfuzz=15pt

\newtheorem{theorem}{Theorem}[section]

\newtheorem{definition}[theorem]{Definition}

\newtheorem{lemma}[theorem]{Lemma}
\newtheorem{sub-lemma}[theorem]{Sub-Lemma}

\newtheorem{remark}[theorem]{Remark}
\newtheorem{numassumption}[theorem]{Numerical assumption}




\newcommand\eps{\epsilon}




\newcommand{\norm}[1]{\left\lVert#1\right\rVert}

\newcommand{\marginnotet}[1]
           {\mbox{}\marginpar{\tiny\raggedright\hspace{0pt}{{\bf Toby}$\blacktriangleright$  {\color{blue} #1}}}}

\newcommand{\logg}{\text{logg}}

\begin{document}
\title{Rigorous Computation of Linear Response for intermittent maps}
\author{Isaia Nisoli}
\address{Instituto de Matematica - UFRJ Av. Athos da Silveira Ramos 149, Centro de Tecnologia - Bloco C Cidade Universitaria - Ilha do Fund\~ao. Caixa Postal 68530 21941-909 Rio de Janeiro - RJ - Brasil}
\email{nisoli@im.ufrj.br}
\author{Toby Taylor-Crush}
\address{Department of Mathematical Sciences, Loughborough University,
Loughborough, Leicestershire, LE11 3TU, UK}
\email{T.Taylor-Crush@lboro.ac.uk}
\subjclass{Primary 37A05, 37E05}
\date{\today }
\keywords{Linear Response, Intermittent Maps, Transfer Operators, Rigorous Approximations}

\pagestyle{myheadings} 

\begin{abstract}
We present a rigorous numerical scheme for the approximation of the linear 
response of the invariant density of a map with an indifferent fixed point, 
with explicit and computed estimates for the error and all the involved constants.
\end{abstract}

\maketitle
\section{Introduction}
 
In \cite{R} Ruelle proved that for certain perturbations of uniformly hyperbolic 
deterministic dynamical systems the underlying SRB measure changes smoothly. 
He also obtained a formula for the derivative of the SRB measure, called the 
\emph{linear response formula} 
\cite{R}\footnote{See also earlier related work \cite{KP}. 
See also \cite{Got} for a comprehensive historical account including literature from physics.}.
Since then, the topic of linear response has been a very active direction of research in 
smooth ergodic theory. Indeed, the work of Ruelle was refined in the uniformly hyperbolic 
setting \cite{BL,GL}, extended to the partially hyperbolic setting \cite{D}, and has been 
a topic of deep investigation for unimodal maps, see \cite{Ba1}, the survey article \cite{Ba2}, 
the recent works \cite{ABLP, BaSm, LS, S} and references therein. 
More recently, the topic of linear response was also studied in the context of random or 
extended systems \cite{BSR, DS, GG, GS, KLP, ST, WG1}. 
Optimisation of statistichal properties through linear respone was develope in \cite{ADF, AFJ, GaPo, Klo}. 

Numerical algorithms for the approximation of linear response for uniformly expanding maps, 
via finite rank transfer operators was obtain in \cite{BGNN} and via dynamical determinants 
and periodic orbits in \cite{PV}\footnote{See also \cite{Ja} for related work on 
dynamical determinants.}, and for uniformly hyperbolic systems \cite{GLuc, Luc, Ni}. 

Our work extends the methods in \cite{BGNN} to intermittent maps far from the boundary, allowing
us to compute the linear response for LSV maps, a version
of the Manneville-Pomeau family, \cite{LSV} as the exponent at the indifferent fixed point changes.

Linear response for indifferent fixed point maps has been investigated in \cite{BS, BT, K}, 
but three important questions have to be addressed to obtain a rigorous numerical approximation scheme:
\begin{enumerate}
\item how to approximate efficiently the involved discretized operators;
\item how to bound the approximation errors involved in the discretization;
\item how to bound explicitly and efficiently the constants used in the proofs of \cite{BS, BT, K}.
\end{enumerate}

In our paper we provide answers to the three questions above for general intermittent maps 
and present an explicit computation for LSV type maps.  
Our scheme and tecniques are very flexible and can be easily 
adapted to other one dimensional nonuniformly expanding maps 
whose associated transfer operators do not admit a spectral gap (or a uniform spectral gap) 
as long as the linear response formula can be obtained via inducing with the first return map.

In the text are presented some numerical remarks, that allow the reader to get an overview
of some of the delicate points of the implementation.

The paper is divided as follows: in Section \ref{sec:hypothesis} we state the hypothesis on 
the dynamical system and state our results, in Section \ref{sec:density} we present the 
theory behind the approximation of the density for the induced map, in Section 
\ref{sec:InducedResponse} we discuss the approximation of the linear response for the induced
map, in Section \ref{sec:normalizing} we discuss pulling back the measure to the original map
and normalizing the density, in Section \ref{sec:mainthm} we give a proof of the fact that the error may be made as 
small as wanted, in Section \ref{sec:example} we compute an approximation with an explicit
error of the linear response for an LSV map; section \ref{sec:appendix} is devoted to computing
effective bounds for the constants in \cite{BS, K} and section \ref{sec:A0B0}
explains the tecnique we use to compute some of the functions involved in our approximation.

\section*{Acknowledgements}
The authors would like to thank Prof. Bahsoun and Prof. Galatolo for their guidance and
assistance, their patience and attention.
Isaia Nisoli was partially supported by CNPq, UFRJ, CAPES 
(through the programs PROEX and the CAPES-STINT project 
``Contemporary topics in non uniformly hyperbolic
dynamics'') and Hokkaido University.

\section*{Data Availability}
The package used for the computations may be found at \url{https://github.com/orkolorko/InvariantMeasures.jl}. The Jupyter notebook with the experiment can be provided under inquiry and will published online as soon as possible.

\section{Hypothesis on the map and statement of the results}\label{sec:hypothesis}

We are interested in approximating the invariant density and linear response 
for one dimensional interval maps with an indeterminate fixed point by inducing. 
In particular we wish to gain explicitly calculable error bounds in the $L^1$ norm. We use an induced map on $[0.5,1]$, to gain a map with good statistical properties to approximate an invariant density and linear response, and then using a formula used in \cite{BS} to pull back our approximation to the invariant density and linear response of the full map. We apply this method to a family of Pomeau-Manneville maps to gain an approximation of the statistics with explicit error.


\subsection{Interval maps with an inducing scheme}\label{ss:T}
We introduce now a class (family) of interval maps which are non-uniformly expanding with two branches,
for which one can construct an inducing scheme which allow it to inherit the linear response formula from 
the one for the induced system.

\begin{itemize}
\item Let $V$ be a neighbourhood of $0$. For any $\eps\in V$, $T_\eps\colon [0,1]\to[0,1]$ is a non-singular map, with respect to Lebesgue measure, $m$, with two onto branches $T_{0,\eps}\colon[0,0.5]\to[0,1]$ and $T_{1,\eps}\colon[0.5,1]\to[0,1]$. The inverse branches of  $T_{0,\eps}$, $T_{1,\eps}$ are respectively denoted by $g_{0,\eps}$ and $g_{1,\eps}$. We call $T_0:=T$ the unperturbed map, and $T_\eps$, for $\eps\not= 0$, the perturbed map.
\item We assume that for each $i=0,1$ and $j=0,1,2$ the following partial derivatives exist and satisfy the commutation relation
\begin{equation}\label{comm}
\partial_\eps g_{i,\eps}^{(j)} = (\partial_\eps g_{i,\eps})^{(j)}.
\end{equation}
\item We assume that $T_\eps$ has a unique absolutely continuous invariant measure\footnote{The $T_\eps$ absolutely continuous invariant measure is not assumed to be probabilistic; we allow for $T_\eps$ to admit a $\sigma$-finite absolutely continuous invariant measure.} (up to multiplication) whose Radon-Nikodym derivative will be denoted by $h_\eps$, and we denote for simplicity $h=h_0$.
\item Let $\hat T_\eps$, be the first return map of $T_\eps$ to $\Delta$, where $\Delta:= [0.5,1]$; i.e., for $x\in\Delta$
$$\hat T_{\eps}(x)=T_\eps^{R_\eps(x)}(x),$$
where 
$$R_\eps(x)=\inf\{n\ge 1:\, T^{n}_\eps(x)\in\Delta\}.$$
We assume that $\hat T_\eps$ has a unique absolutely continuous invariant measure (up to multiplication) with a continuous density denoted $\hat h_\eps\in C^0$.
\item Let $\Omega$  be the set of finite sequences of the form $\omega = 10^n$, for $n \in\mathbb N \cup\{0\}$. We set $g_{\omega,\eps}=g_{1,\eps}\circ g_{0,\eps}^{n}$. Then for $x\in [0,1]$ we have $T_{\eps}^{n+1}\circ g_{\omega,\eps}(x)=x$. The cylinder sets $[\omega]_{\eps} = g_{\omega, \eps}(\Delta)$, form a partition of $\Delta$ (mod $0$).
For $x\in [0,1]$, we assume
\begin{equation}\label{a4}
\sup_{\eps\in V}\sup_{x\in[0,1]}|g'_{\omega,\eps}(x)| <\infty ;
\end{equation}
\begin{equation}\label{a4.0}
\sup_{\eps\in V}\sup_{x\in[0,1]}|\partial_\eps g_{\omega,\eps}(x)| < \infty;
\end{equation}
\begin{equation}\label{a4'}
\sum_{\omega}\sup_{\eps\in V}||g'_{\omega,\eps}||_{\mathcal B} <\infty;
\end{equation}
and
\begin{equation}\label{a5}
\sum_\omega\sup_{\eps\in V}||\partial_\eps g_{\omega,\eps}'||_{\mathcal B}<\infty,
\end{equation}
\end{itemize}
where $\mathcal{B}$ denotes the set of continuous functions on $(0,1]$ with the norm $$\parallel f\parallel_{\mathcal{B}}=\sup\limits_{x\in(0,1]}|x^{\gamma}f(x)|,$$ for a fixed\footnote{In \eqref{a4'} and \eqref{a5} we need the assumptions to hold
only for a single $\gamma$.} $\gamma>0$. When equipped with the norm $\parallel \cdot\parallel_{\mathcal{B}}$, $\mathcal{B}$ is a Banach space. 

For $\Phi\in L^1$, let 
\begin{equation}\label{theF}
F_\eps(\Phi):=1_\Delta\Phi + (1-1_\Delta)\sum_{\omega\in\Omega}\Phi\circ g_{\omega,\eps}g_{\omega,\eps}'.
\end{equation}
Note that $F_\eps$ is a linear operator. In fact, for $x\in[0,1]\setminus \Delta$, the formula of $F_{\eps}$ can be re-written using the Perron-Frobenius operator of $T_\eps$: 
\begin{equation}\label{eq:renewal}
F_\eps(\Phi):=1_\Delta\Phi + (1-1_\Delta)\sum_{k\ge1}L^{k}_{\eps}(\Phi\cdot 1_{\{R_\eps>k\}}),
\end{equation}
where $L_\eps$ is the Perron-Frobenius operator associated with $T_\eps$; i.e., for $\varphi\in L^{\infty}$ and $\psi\in L^1$
$$\int\varphi\circ T_\eps\cdot\psi dm=\int \varphi\cdot L_\eps\psi dm.$$
It is given in \cite{BS} that the densities of the original system and the induced one are related (modulo normalization in the finite measure case) by
\begin{equation}\label{eq:density}
h_\eps = F_\eps(\hat h_\eps).
\end{equation}

We also define the following operator, which represents $\partial_\eps F_\eps\Phi|_{\eps=0}$ 
\begin{equation}\label{eq:Q}
Q\Phi = (1-1_\Delta) \sum_\omega \Phi'\circ g_\omega\cdot a_\omega g_\omega' + \Phi\circ g_\omega\cdot b_\omega,
\end{equation}
where $a_\omega=\partial_\eps g_{\omega,\eps}|_{\eps=0}$ and $b_\omega=\partial_\eps g_{\omega,\eps}'|_{\eps=0}$.

\subsection{Interval maps with countable number of branches}\label{ss:hatT}
We introduce here a class of interval maps which are uniformly expanding,
with a finite or countable number of branches, for which we will be able to 
prove a linear response formula. The induced map in Subsection \ref{ss:T} is a particular case of such uniformly expanding maps.\\

Let $\Delta$ be an interval and $V$ be a neighborhood of $0$. Let $\Omega$ be a finite or countable set.
We assume that the maps $\hat T_\eps\colon \Delta\to\Delta$ satisfy
\begin{itemize}
\item
For each $\eps\in V$, there exists a partition (mod 0) of $\Delta$ into open intervals $\Delta_{\omega,\eps}$, $\omega\in\Omega$ such that the restriction of $\hat T_\eps$ to $\Delta_{\omega,\eps}$ is piecewise $C^3$, onto and uniformly expanding in the sense that $\inf_\omega\inf_{\Delta_{\omega,\eps}}|\hat T_{\omega,\eps}'|>1$. We denote by $g_{\omega,\eps}$ the inverse branches of $\hat T_\eps$ on $\Delta_{\omega,\eps}$.
\item
We assume that for each $\omega\in\Omega$ and $j=0,1,2$ the following partial derivatives exist and satisfy the commutation relation\footnote{Note that \eqref{commB} is satisfied when $\hat T_\eps$
is an induced map as in Subsection~\ref{ss:T}. In particular, for each $i=0,1$ and $j=0,1,2$ the following partial derivatives exist and satisfy the commutation relation $\partial_\eps g_{i,\eps}^{(j)} = (\partial_\eps g_{i,\eps})^{(j)}$.}
\begin{equation}\label{commB}
\partial_\eps g_{\omega,\eps}^{(j)} = (\partial_\eps g_{\omega,\eps})^{(j)}.
\end{equation}
\item
We assume
\begin{equation}\label{a2}
\sup_\omega\sup_{\eps\in V}\sup_{x\in\Delta} \left|\frac{g_{\omega,\eps}''(x)}{g_{\omega,\eps}'(x)}\right|<\infty;
\end{equation}
and for $i=2,3$
\begin{equation}\label{a1}
\sum_{\omega}\sup_{\eps\in V}\sup_{x\in\Delta}|g^{(i)}_{\omega, \eps}(x)|<\infty;
\end{equation}
and for $i=1,2$
\begin{equation}\label{a3}
\sum_\omega\sup_{\eps\in V}\sup_{x\in\Delta}|\partial_\eps g_{\omega,\eps}^{(i)}(x)| <\infty. 
\end{equation}
\end{itemize}

Let $\hat L_\eps$ denote the transfer operator of the map $\hat T_\eps$; i.e., for  $\Phi\in L^1(\Delta)$ 
$$\hat L_\eps\Phi(x) :=\sum_{\omega\in\Omega}\Phi\circ g_{\omega,\eps}(x)g_{\omega,\eps}'(x)$$
for a.e. $x\in\Delta$. Under these conditions it is well known that $\hat T_\eps$ admits a unique (up to multiplication) finite absolutely continuous invariant measure. We denote its density by $\hat h_\eps$. Hence $\hat L_\eps\hat h_\eps=\hat h_\eps$. Moreover, $\hat L_{\eps}$ has a spectral gap when acting on $C^k$ and $W^{k,1}$
, $k=1,2$.
We denote the  Perron-Frobenius operator of the unperturbed map $\hat T$ by $\hat L$; i.e., $\hat L:=\hat L_0$ and let $\hat h:=\hat h_0$.   

\subsection{Linear response formula}

In \cite{BS} it is shown that the invariant density $\hat h_\eps$ of the induced map $\hat T_\eps$ is differentiable as a $C^0$ element and its linear response formula is given by
\begin{equation}\label{eq:response_ind}
\hat h^*:=(I-\hat L)^{-1}\hat L[A_0\hat h'+B_0\hat h],
\end{equation}
where $\hat h'$ is the spatial derivative of $\hat h$ and
$$A_0=-\left(\frac{\partial_{\eps}\hat T_{\eps}}{\hat T'_{\eps}}\right){\Big{|}}_{\eps=0},\hskip 0.5cm B_0= \left(\frac{\partial_{\eps}\hat T_{\eps}\cdot \hat T_{\eps}''}{\hat T_\eps'^2}-\frac{\partial_{\eps}\hat T_{\eps}'}{\hat T_\eps'}\right){\Big{|}}_{\eps=0}.$$ 
Moreover, for the original map, 
$\eps\mapsto h_\eps$ is differentiable as an element of $\mathcal B$; in particular, if the conditions hold for some $\gamma<1$
$$\lim_{\eps\to 0}||\frac{h_{\eps}-h}{\eps}-h^*||_{1}=0,$$
and $h^*$ is given by 
\footnote{Note that in the finite measure case, $h^*$ is the derivative of the non-normalized density $h_\eps$. The advantage in working with $h_{\eps}$ is reflected in keeping the operator $F_{\eps}$ linear and to accommodate the infinite measure preserving case. In the finite measure case, once the derivative of $h_{\eps}$ is obtained, the derivative of the normalized density can be easily computed. Indeed, $h_\eps=h +\eps h^*+o(\eps)$. Consequently, $\int h_\eps=\int h +\eps \int h^*+o(\eps)$. Hence, $\partial_{\eps}(\frac{h_\eps}{\int h_\eps}){|}_{\eps=0}=h^*-h\int h^*$.}

\begin{equation}\label{eq:lrf}
h^* = F_0 \hat h^*  + Q\hat h.
\end{equation}

\subsection{Main result and explicit strategy}
 We focus on the case $\gamma<1$. The goal of this work is to provide a numerical scheme that can rigorously approximate $h^*$, up to a pre-specified error $\tau>0$, in the $L^1$-norm. To obtain such a result we follow the following steps:
 \begin{enumerate}
 \item first provide a sequence of finite rank operators $\hat L_\eta$ that can be used to approximate the linear response for the induced map $\hat h^*$ in $L^1(\Delta)$. Since the formula of $\hat h^*$ involves $\hat h$ and $\hat h'$, we will design $\hat L_\eta$ so that its invariant density, $\hat h_\eta$, well approximates  $h_\eta$ in the $C^1$-norm,
 \item we pull-back to the original map by defining $F_0^{\text{app}}$ and $Q_0^{\text{app}}$ by truncating \eqref{theF} and \eqref{eq:Q}; i.e., for $\Phi\in L^1$,
 $$F_0^{\text{app}}(\Phi):=1_\Delta\Phi + (1-1_\Delta)\sum_{\omega=1}^{N^*}\Phi\circ g_{\omega,0}g_{\omega,0}'$$
 and 
 $$Q^{\text{app}}\Phi = (1-1_\Delta) \sum_{\omega=1}^{N^*} \Phi'\circ g_\omega\cdot a_\omega g_\omega' + \Phi\circ g_\omega\cdot b_\omega$$
\item finally, find $N^*$ large enough and set
\begin{equation}\label{eq:approxh^*}
h^*_\eta:=F_0^{\text{app}}\hat h_\eta^*+Q^{\text{app}}\hat h_\eta
\end{equation}
so that
\begin{equation*}
\| h^*_\eta-h^*\|_1\le \tau.
\end{equation*}
 \end{enumerate}

This strategy allows us to prove the following theorem.

\begin{theorem}\label{thm:main} For any $\tau>0$, there exists a sequence of finite rank operators $\hat L_\eta:L^{1}(\Delta)\to L^1(\Delta)$ such that for $\eta>0$ small enough and $N^*>0$ large enough 
$$\| h^*_\eta-h^*\|_1\le \tau.$$ 
\end{theorem}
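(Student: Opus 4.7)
The plan is to decompose the error by adding and subtracting $F_0^{\text{app}}\hat h^*$ and $Q^{\text{app}}\hat h$, which gives
\begin{equation*}
h^*_\eta - h^* = (F_0^{\text{app}} - F_0)\hat h^* + F_0^{\text{app}}(\hat h^*_\eta - \hat h^*) + (Q^{\text{app}} - Q)\hat h + Q^{\text{app}}(\hat h_\eta - \hat h).
\end{equation*}
The first and third summands depend only on the truncation parameter $N^*$, while the second and fourth depend only on the discretization parameter $\eta$. I would force each of the four $L^1$-norms to be at most $\tau/4$, the first two by choosing $N^*$ large, the last two by then choosing $\eta$ small with $N^*$ already fixed.

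For the truncation part, observe that $(F_0 - F_0^{\text{app}})\hat h^*$ is supported on $[0,1]\setminus\Delta$ and equals $\sum_{\omega>N^*}\hat h^*\circ g_{\omega,0}\cdot g_{\omega,0}'$. Since $\hat h^*\in C^0(\Delta)$ by \eqref{eq:response_ind}, it is bounded, and using $g_{\omega,0}'(x)\le x^{-\gamma}\|g_{\omega,0}'\|_{\mathcal{B}}$ together with $\gamma<1$ and \eqref{a4'}, one obtains
\[
\|(F_0-F_0^{\text{app}})\hat h^*\|_1 \le \|\hat h^*\|_\infty \int_0^{1/2} x^{-\gamma}\,dx \cdot \sum_{\omega>N^*}\|g_{\omega,0}'\|_{\mathcal{B}},
\]
which tends to $0$ as $N^*\to\infty$. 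The bound on $\|(Q-Q^{\text{app}})\hat h\|_1$ is analogous, splitting the sum into the $a_\omega g_\omega'$-part and the $b_\omega$-part, using $\|\hat h\|_{C^1(\Delta)}$ (finite by the standard spectral-gap theory for $\hat L$ under \eqref{a2}--\eqref{a1}) together with \eqref{a4.0} and the tails of \eqref{a4'} and \eqref{a5}.

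For the discretization part, the plan is to rely on Sections~\ref{sec:density} and \ref{sec:InducedResponse}, where the finite-rank operators $\hat L_\eta$ are constructed so that $\|\hat h_\eta - \hat h\|_{C^1}\to 0$ and $\|\hat h^*_\eta - \hat h^*\|_1\to 0$ as $\eta\to 0$. One also needs the uniform-in-$N^*$ operator bounds $\|F_0^{\text{app}}\|_{L^1\to L^1}\le 2$ (which follows from the partition identity $\sum_\omega\int_{[\omega]_0}|\Psi|\,dm = \int_\Delta |\Psi|\,dm$) and $\|Q^{\text{app}}\|_{C^1\to L^1}\le C$ (which follows from \eqref{a4.0} together with the summability in \eqref{a4'} and \eqref{a5}). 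With these in hand, the last two summands in the decomposition are bounded respectively by $2\|\hat h^*_\eta-\hat h^*\|_1$ and $C\|\hat h_\eta - \hat h\|_{C^1}$, and can be made $\le \tau/4$ for $\eta$ small enough.

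The hard part is not in this assembly step but in building $\hat L_\eta$: one needs a finite-rank approximation whose invariant density converges in $C^1$, not just $L^1$, because the linear-response formula \eqref{eq:response_ind} contains $\hat h'$ and because $Q^{\text{app}}$ applies a derivative; simultaneously the pseudoinverse $(I-\hat L_\eta)^{-1}$ must be controlled uniformly in $\eta$ on an appropriate subspace so that $\hat h^*_\eta$ converges to $\hat h^*$. This requires a smoother projection than a plain Ulam scheme, and this is where the real work lies in the preceding sections. Once that construction is granted, the four-term decomposition and the two-step parameter choice ($N^*$ first, then $\eta$) give the theorem.
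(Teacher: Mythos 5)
Your decomposition is the transpose of the paper's: the paper writes $h^*_\eta-h^*$ as $(F_0\hat h^*-F_0\hat h^*_\eta)+(F_0\hat h^*_\eta-F_0^{\text{app}}\hat h^*_\eta)+\dots$, i.e.\ it applies the \emph{full} operator to the difference of densities and evaluates the truncation error on the \emph{computed} object $\hat h^*_\eta$, whereas you apply the truncated operator to the difference and evaluate the truncation error on the exact $\hat h^*$. Both assemblies are algebraically correct and both lean on the same ingredients (summability of $\norm{g_\omega'}_{\mathcal B}$ and $\norm{b_\omega}_{\mathcal B}$, $\gamma<1$, $C^1$-convergence of $\hat h_\eta$ and $C^0$-convergence of $\hat h^*_\eta$ from Sections \ref{sec:density} and \ref{sec:InducedResponse}). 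Your version decouples the parameters more cleanly ($N^*$ first, then $\eta$); the paper's version is the one you would actually compute with, since its truncation term involves $\norm{\hat h^*_\eta}_{C^0}$, which the machine knows, rather than $\norm{\hat h^*}_\infty$, which must be bounded a priori.

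One step is wrong as stated: the bound $\norm{F_0^{\text{app}}}_{L^1\to L^1}\le 2$ does not follow from a partition identity, and is false uniformly in $N^*$. The change of variables $y=g_\omega(x)$ in $\int_{\Delta^c}|\Psi\circ g_\omega|\,g_\omega'\,dx$ produces $\int_{g_\omega(\Delta^c)}|\Psi|\,dy$, and the sets $g_\omega(\Delta^c)=g_1\circ g_0^{n}([0,1/2))$ are \emph{nested} intervals shrinking to $\{1/2\}$, not the disjoint cylinders $[\omega]_0=g_\omega(\Delta)$. Summing over $\omega$ therefore counts each $y\in\Delta$ with multiplicity comparable to the return time $R(y)$, so $\norm{F_0\Psi}_1$ behaves like $\int R\,|\Psi|$, which is not controlled by $\norm{\Psi}_1$ (for the LSV family $R\notin L^\infty$). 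The correct statement, and the one the paper uses in its estimate $(I)$, is the $C^0\to L^1$ bound $\norm{F_0\Psi-1_\Delta\Psi}_1\le \frac{1}{2^{1-\gamma}(1-\gamma)}\norm{\Psi}_{C^0}\sum_\omega\norm{g_\omega'}_{\mathcal B}$; so the second summand of your decomposition must be paid for with $\norm{\hat h^*_\eta-\hat h^*}_{C^0}$ rather than its $L^1$ norm --- which is fine, since Lemma \ref{thm:induced_resp} delivers exactly that. (Your argument also survives without any uniformity: because you fix $N^*$ before $\eta$, the crude bound $\norm{F_0^{\text{app}}}_{L^1\to L^1}\le N^*+1$ already suffices; but the claimed justification should be repaired, and the same caution applies to the $a_\omega g_\omega'$ part of $Q^{\text{app}}$.) Everything else, including your correct identification that the real work is the $C^1$-convergent finite-rank construction of the preceding sections, matches the paper.
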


\subsection{The validated numerics toolbox}
While the strategy for the approximation of the linear response may seems quite simple, to make it rigorous, i.e., with a certified 
control on the error terms so that the results have the strength of proofs, many 
different quantities have to be estimated explictly by means of a priori and 
a posteriori estimates.

The main toolbox we use for these validated estimates consists in 
\begin{enumerate}
\item Interval Arithmetics and rigorous contractors as the Interval Newton Method and the 
Shooting Method \cite{Tucker}
\item discretization of the transfer operator, using the Ulam and Chebyshev basis \cite{GN,GMNP,Wo}
\item a priori estimate on the tail of a series and rigorous bounds for a finite number of terms.
\end{enumerate}
We will introduce these methods and some of their implementation details during the proof of our result, showing the difference with the cited references when needed.

\section{Approximating the invariant density of the induced map}\label{sec:density}
To approximate the invariant density for the induced map two approximation steps are needed.
First we need to approximate the induced map, which has countable branches with a
map with a finite number of branches. Then, we will discretize the transfer operator 
of this map by using a Chebyshev approximation scheme.

\subsection{From countable branches to finite branches}\label{ss:ApproxMethod}
Let $\delta_k>0$ with $\delta_k=\left | \cup_{n=k}^{\infty}[\omega]\right|$. To simplify notation we assume without loss of generality that $\frac12\in \overline{\cup_{n=k}^{\infty}[\omega]}$. Let

$$\hat T_{\delta_k}(x)=\begin{cases}
       \hat T(x)     & \mbox{ , if } x\in[\delta_k,1],\\
       \frac12\delta_k^{-1}(x-\frac12) +\frac12& \mbox{ , if } x\in[0.5,\delta_k).\\
       \end{cases}$$
Then the transfer operator $\hat L_{\delta_k}$, associated with $T_{\delta_k}$ is 
acting on $\Phi\in L^1(\Delta)$ as: 
$$\hat L_{\delta_k}\Phi(x) :=\sum_{\overset{\omega\in\Omega}{n<k}}\Phi\circ g_{\omega,\eps}(x)g_{\omega,\eps}'(x)+\Phi\left(\delta_k(2x-1)+\frac{1}{2}\right)2\delta_k$$
for a.e. $x\in\Delta$.
\begin{lemma}\label{lem:tripleC2}
Let $\Phi\in C^1$, then 
$$\|(\hat L-\hat L_{\delta_k})\Phi\|_{C^1}\le(D+D_0D+2)\|\Phi\|_{C^1}\delta_k,$$ 
where $D_0=\Big\|\frac{g_{\omega}''}{g'_\omega}\Big\|_\infty$ and $D\geq 2\sup_{\omega}\frac{|g'_\omega(x)|}{|g'_\omega(y)|}$ for all\footnote{The existence of a uniform constant $D>0$ is implied by condition \eqref{a2}.} $x,y$ in $[0.5,1]$.
\end{lemma}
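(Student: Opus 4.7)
The plan is to expand the difference operator explicitly, bound its action on $\Phi$ and on the derivative $(\hat L - \hat L_{\delta_k})\Phi$, and then combine the two estimates in the $C^1$ norm $\|\Phi\|_{C^1} = \|\Phi\|_\infty + \|\Phi'\|_\infty$. First I would write
$$(\hat L - \hat L_{\delta_k})\Phi(x) = \sum_{n\ge k}\Phi\circ g_{\omega}(x)\,g_{\omega}'(x) - \Phi\bigl(\delta_k(2x-1)+\tfrac12\bigr)\cdot 2\delta_k,$$
since the branches with $n<k$ and the corresponding terms of $\hat L_{\delta_k}$ cancel, and the only new piece in $\hat L_{\delta_k}$ is the linear branch with inverse $x \mapsto \delta_k(2x-1) + \tfrac12$, whose derivative $2\delta_k$ appears as the weight.

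The basic tool I would use is a distortion estimate. From the assumed bound $D \ge 2\sup_\omega |g_\omega'(x)|/|g_\omega'(y)|$, averaging in $y$ over $\Delta$ (of length $1/2$) gives $g_\omega'(x) \le D\,|[\omega]|$ pointwise, and summing over $n\ge k$ yields $\sum_{n\ge k} g_\omega'(x) \le D\delta_k$ by definition of $\delta_k$. Combined with the uniform expansion $|g_\omega'|\le 1$, this also gives $\sum_{n\ge k}(g_\omega'(x))^2 \le D\delta_k$. So for the sup norm of the difference I would bound the tail sum by $D\|\Phi\|_\infty\delta_k$ and the linear term by $2\|\Phi\|_\infty\delta_k$, producing the $(D+2)$ part of the constant.

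For the derivative, I would apply the product rule term by term:
$$\frac{d}{dx}\bigl[\Phi\circ g_\omega\cdot g_\omega'\bigr] = \Phi'\circ g_\omega\cdot (g_\omega')^2 + \Phi\circ g_\omega\cdot g_\omega''.$$
The first piece is controlled by $\|\Phi'\|_\infty \sum(g_\omega')^2 \le D\delta_k\|\Phi'\|_\infty$. For the second, I rewrite $|g_\omega''| = |g_\omega''/g_\omega'|\cdot g_\omega' \le D_0\, g_\omega'$, which produces the factor $D_0 D\,\delta_k\,\|\Phi\|_\infty$. The derivative of the linear term is $4\delta_k^2\,\Phi'(\delta_k(2x-1)+\tfrac12)$, which is bounded by $2\delta_k\|\Phi'\|_\infty$ for $\delta_k\le 1/2$ (which holds for $k$ large, the regime of interest).

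Finally I would add the sup-norm estimate and the derivative estimate, collecting like coefficients:
$$\|(\hat L-\hat L_{\delta_k})\Phi\|_{C^1} \le \delta_k\bigl[(D+2+D_0 D)\|\Phi\|_\infty + (D+2)\|\Phi'\|_\infty\bigr] \le (D+D_0 D+2)\delta_k\|\Phi\|_{C^1}.$$
The main obstacle is really bookkeeping, not conceptual: one needs the distortion translation $g_\omega'(x)\le D|[\omega]|$ to be sharp enough so that the tail summation produces exactly $D\delta_k$ (rather than a worse multiple), and one needs to notice that the $(g_\omega')^2$ term can be absorbed into a first-power sum via $g_\omega'\le 1$, so that the single constant $D$ suffices in both the function and derivative estimates.
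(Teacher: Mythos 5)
Your proof is correct and follows essentially the same route as the paper's: the same decomposition into the tail sum plus the artificial linear branch, the same product-rule expansion for the derivative with $|g_\omega''|\le D_0|g_\omega'|$, and the same distortion estimate $|g_\omega'(x)|\le D\,|[\omega]|$ (which the paper derives via the mean value theorem rather than by averaging over $\Delta$, an equivalent step), followed by summing $|[\omega]|$ over the discarded branches to get $\delta_k$. The only cosmetic difference is that you absorb the $4\delta_k^2$ term via $\delta_k\le 1/2$ explicitly, while the paper absorbs it into the final $C^1$ bound; both are fine.
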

\begin{proof}
First notice that
\begin{equation}\label{eq:est1}
\begin{split}
\left|(\hat L-\hat L_{\delta_k})\Phi\right|&= \left|\sum_{\overset{\omega\in\Omega}{n\ge k}}\Phi\circ g_{\omega}(x)g_{\omega}'(x)-\Phi(\delta_k(2x-1)+\frac12)2\delta_k\right|\\
&\le \sum_{\overset{\omega\in\Omega}{n\ge k}}|\Phi\circ g_{\omega}(x)|\cdot|g_{\omega}'(x)|+|\Phi(\delta_k(2x-1)+\frac12)|2\delta_k
\end{split}
\end{equation}
and
\begin{equation}\label{eq:est2}
\begin{split}
&\left|\left((\hat L-\hat L_{\delta_k})\Phi\right)'\right|=\\ 
&\left|\sum_{\overset{\omega\in\Omega}{n\ge k}}\left(\Phi'\circ g_{\omega}(x)(g_{\omega}'(x))^2+\Phi\circ g_{\omega}(x)g_{\omega}''(x)\right)\right. 
\left. -|\Phi'(\delta_k(2x-1)+\frac12)4\delta_k^2|\right|\\
&\le\sum_{\overset{\omega\in\Omega}{n\ge k}}|\Phi'\circ g_{\omega}(x)|\cdot (g_{\omega}'(x))^2+ \sup_\omega\Big\|\frac{g_{\omega}''}{g'_\omega}\Big\|_\infty\sum_{\overset{\omega\in\Omega}{n\ge k}}|\Phi\circ g_{\omega}(x)|\cdot|g_\omega'(x)|\\
&+|\Phi'(\delta_k(2x-1)+\frac12)|4\delta_k^2.
\end{split}
\end{equation}
Now notice that by the Mean Value Theorem, $\exists\, \xi_\omega\in(\frac12,1)$ such that
$$|g_\omega(1)-g_\omega(\frac12)|=|g'_\omega(\xi_\omega)|/2.$$
Therefore, 
\begin{equation}\label{eq:est3}
|g'_\omega(x)|\le 2 
|g_\omega(1)-g_\omega(\frac12)|\cdot \sup_\omega\frac{|g'_\omega (x)|}{|g'_\omega(\xi_\omega)|}:= D\cdot |g_\omega(1)-g_\omega(\frac12)|.
\end{equation}
Thus, using \eqref{eq:est3} in \eqref{eq:est1} and \eqref{eq:est2}, we obtain

\begin{equation}\label{eq:est4}
\begin{split}
&\left|(\hat L-\hat L_{\delta_k})\Phi\right|+\left|\left((\hat L-\hat L_{\delta_k})\Phi\right)'\right|\le\|\Phi\|_{C^0}D\sum_{\overset{\omega\in\Omega}{n\ge k}}|g_\omega(1)-g_\omega(\frac12)|+\|\Phi\|_{C^0}2\delta_k\\
&\hskip 0.5cm+\left(\|\Phi'\|_{C^0}D+\|\Phi\|_{C^0}D\sup_\omega\Big\|\frac{g_{\omega}''}{g'_\omega}\Big\|_\infty\right)\sum_{\overset{\omega\in\Omega}{n\ge k}}\cdot |g_\omega(1)-g_\omega(\frac12)|+\|\Phi'\|_{C^0}4\delta_k^2\\
&\hskip 0.5cm =(D+D_0D+2)\|\Phi\|_{C^0}\delta_k+\|\Phi'\|_{C^0}4\delta_k^2\le(D+D_0D+2)\|\Phi\|_{C^1}\delta_k.
\end{split}
\end{equation}
\end{proof}
The next lemma shows that using the above information, the densities $\hat h$ and $\hat h_{\delta_k}$ can be made arbitrarily close in $C^1$.

\begin{lemma}\label{lem:GeneralisedFixedPointLemma}
For two operators, $L_1$ and $L_2$, with fixed points $h_1$ and $h_2$ normalised with respect to $||\cdot||_{1}$, and a shared Lasota-Yorke innequality
\begin{equation*}
\norm{L_i^nf}_{s}\leq A\lambda^n\norm{f}_{s}+B\norm{f}_{w}
\end{equation*}
for $i\in \{1,2\}$, there is a $C^*$ such that $\norm{L_i^nf}_{s}\leq C^*\norm{f}_{s}$,
and for any $N\ge 1$ we have
\begin{equation*}
||h_1-h_2||_{s}\leq ||L_1^{N}(h_1-h_2)||_{s}+NC^*||(L_1-L_2) h_2||_{s}.
\end{equation*}
Furthermore if $\norm{L_1^N|_{U^0}}_{s}\leq C_N<1$ then we can have
\begin{equation*}
||h_1-h_2||_{s}\leq \frac{NC^*||(L_1-L_2) h_2||_{s}}{1-C_N}.
\end{equation*}
\end{lemma}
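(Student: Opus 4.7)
The plan is to exploit the fact that both $h_1$ and $h_2$ are fixed points so the difference can be expanded via a telescoping identity, and then to apply the uniform bound $\|L_i^n f\|_s \le C^* \|f\|_s$ term by term.

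First I would write, using $L_1^N h_1 = h_1$ and $L_2^N h_2 = h_2$,
\begin{equation*}
h_1 - h_2 \;=\; L_1^N h_1 - L_2^N h_2 \;=\; L_1^N(h_1 - h_2) \;+\; (L_1^N - L_2^N)h_2,
\end{equation*}
so by the triangle inequality
\begin{equation*}
\|h_1 - h_2\|_s \;\le\; \|L_1^N(h_1 - h_2)\|_s + \|(L_1^N - L_2^N)h_2\|_s.
\end{equation*}
Next I would apply the standard telescoping identity
\begin{equation*}
L_1^N - L_2^N \;=\; \sum_{k=0}^{N-1} L_1^{k}\,(L_1 - L_2)\,L_2^{N-1-k},
\end{equation*}
and use the fact that $L_2^{N-1-k}h_2 = h_2$ to collapse each term, giving
\begin{equation*}
\|(L_1^N - L_2^N)h_2\|_s \;\le\; \sum_{k=0}^{N-1} \|L_1^k (L_1-L_2)h_2\|_s \;\le\; N\,C^{*}\,\|(L_1-L_2)h_2\|_s,
\end{equation*}
where the last step uses the uniform bound $\|L_1^k f\|_s\le C^*\|f\|_s$ (which follows from the shared Lasota--Yorke inequality together with the fact that iterations of $L_1$ remain bounded in the weak norm). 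Combining the two estimates yields the first inequality of the lemma.

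For the second inequality, I would observe that since $h_1$ and $h_2$ are both normalized so that $\|h_i\|_1 = 1$ (and both are transfer-operator fixed points, hence of common sign and equal mass), their difference $h_1 - h_2$ lies in the zero-average subspace $U^0$ on which the contraction hypothesis $\|L_1^N|_{U^0}\|_s \le C_N < 1$ applies. Therefore
\begin{equation*}
\|L_1^N(h_1-h_2)\|_s \le C_N\|h_1-h_2\|_s,
\end{equation*}
and substituting this into the first inequality and rearranging gives
\begin{equation*}
(1-C_N)\,\|h_1-h_2\|_s \;\le\; N\,C^{*}\,\|(L_1-L_2)h_2\|_s,
\end{equation*}
which is the stated bound.

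The only genuinely delicate point is the telescoping step, specifically keeping track of on which side of $(L_1 - L_2)$ each operator sits so that $h_2$'s invariance under $L_2$ can be used to avoid incurring a factor depending on $\|h_2\|_s$; a symmetric choice placing $L_1^{N-1-k}$ on the right would instead force one to control $\|h_2\|_s$. Everything else is a routine application of the uniform bound $C^*$ and of the contraction hypothesis on $U^0$.
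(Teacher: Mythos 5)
Your proposal is correct and follows essentially the same route as the paper: the same decomposition $h_1-h_2 = L_1^N(h_1-h_2)+(L_1^N-L_2^N)h_2$, the same telescoping identity with $L_1$ powers on the left so that $L_2^{k}h_2=h_2$ collapses the sum, the uniform bound $C^*$ on each of the $N$ terms, and the contraction on $U^0$ (justified, as in the paper, by $\|h_1\|_1=\|h_2\|_1$) to absorb the first term and rearrange.
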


\begin{proof}
The value of $C^*$ is given by $A\lambda+B$, and the distance between the two fixed points is shown as follows,
\begin{eqnarray*}
||h_1-h_2||_{s} &\leq &||L_1^{N}h_1-L_2^{N}h_2||_{s} \\
&\leq &||L_1^{N}(h_1-h_2)||_{s}+||(L_1^{N}-L_2^{N})h_2||_{s}.
\end{eqnarray*}%
Note that
\begin{eqnarray*}
(L_1^{N}-L_2^{N})h_2
&=&\sum_{k=1}^{N}L_1^{N-k}(L_1-L_2)L_2^{k-1}h_2\\
&=&\sum_{k=1}^{N}L_1^{N-k}(L_1-L_2)h_2.
\end{eqnarray*}%
Consequently, 
\begin{eqnarray*}
||(L_1^{N}-L_2^{N}) h_2||_{s} &\leq
&\sum_{k=1}^{N}C^*||(L_1-L_2)h_2||_{s} \\
&\leq &NC^*||(L_1-L_2)h_2||_{s}.
\end{eqnarray*}
Given that $||h_1||_1=||h_2||_1$ we have that $h_1-h_2\in U^0$ and therefore we can bound $||L_1^{N}(h_1-h_2)||_{s}$ by $C_N||h_1-h_2||_{s}$, rearranging gives us the last result.
\end{proof}

\begin{remark}\label{rem:approx1}
The operators $\hat L$ and $\hat L_{\delta_k}$ admit a uniform Lasota-Yorke inequality,
\begin{equation*}
\norm{\hat L^nf}_{C^1}\leq A\lambda^n\norm{f}_{C^1}+B\norm{f}_{C^0}
\end{equation*}
as shown in section \ref{a:Cstar}, where a value for $C^*$ is found. Bounds on $\norm{L_{\delta_k}|_{U^0}}_{C^1}$ can be found by techniques described in section \ref{NR:Convergence}.
The $C^{1}$ norms of $\hat h_{\delta_k}$ and $\hat h$ can be estimated using the Lasota-Yorke inequalities. We can then use lemma \ref{lem:tripleC2} to make the error in $||\hat h_{\delta_k}-\hat h||_{C^{1}}$ as small as we like.
\end{remark}
Next we define a finite rank operator to obtain $\hat h_n$ so that $\|\hat h_n-\hat h_{\delta_k}\|_{C^1}$ can be made as small as required.

\subsection{Approximating the invariant density for $\hat{T}_{\delta_k}$}
To approximate the invariant density, we will discretize the operator $\hat{L}_{\delta_k}$
using the basis of the Chebyshev polynomials of the first kind.
The Chebyshev basis is a basis for the space of polynomials with a main advantage: given
a continuous function $f$ on $[-1,1]$ the interpolating polynomial on the Chebyshev points
are ``near-best'' approximants with respect to $||.||_{\infty}$ \cite[Theorem 16.1]{Tr}; moreover if the function $f$ is regular enough the coefficients of the interpolant decay ``fast'' and
are easily computed by means of the Fast Fourier Transform. 

Before going forward, some observations are in order, since Chebyshev polynomials do not solve all the problems involved with approximation: to apply this approximation scheme we need to prove a priori that our stationary density is regular enough and keep track of all the errors involved in the computation of the coefficients. Moreover, evaluating a Chebyshev polynomial of high degree rigorously is a delicate matter \cite{LedouxMoroz}.

\subsubsection{Chebyshev interpolation and projection}
The material for this section comes from \cite{Tr}, \cite{Hi} and \cite{XCW}. 

Given an $f\in W^{k,p}$ from $[-1,1]\to \mathbb{C}$, we can define a 
function $\mathcal{F}(\theta)$ on $[0, 2\pi]$ by
\begin{equation*}
\mathcal{F}(\theta)=f(\cos(\theta)).
\end{equation*}
The Fast Fourier Transform (FFT) on a grid of size $2N$ allows us to compute the coefficients $a_k$ of the trigonometric polynomial interpolating $\mathcal{F}$ on an equispaced grid $\theta_i = (2\pi i)/(2N)$, for $i$ in $0,\ldots, 2N-1$.

Let $x_i = \cos(\theta_i)$ for $i$ in $1, \ldots, N$; 
observe that $\mathcal{F}(\theta_i) = \mathcal{F}(\theta_{2N-i}) = f(x_i)$. 
We define the Chebyshev polynomials by the relation
\[
T_n(\cos(\theta))=\cos(n\theta), 
\]
then, if we denote by $b_0 = a_0/2$, $b_{N-1} = a_{N-1}/2$ and $b_i = a_i$ for 
all $i=1, \ldots, N-2$:
\[
p(x) = \sum_{k=0}^{N-1} b_k T_k(x),
\]
where the $a_k$ are the ones computed by the FFT is the interpolating polynomial of $f$ on the grid given by the $x_i$.

\begin{definition}
Let $f\in W^{k,1}$, $k>1$, we define the \textbf{Chebyshev (interpolating) projection}
\begin{equation*}
\pi_n f = \sum_{k=0}^n a_k T_k(x).
\end{equation*}
\end{definition}

\begin{remark}
If, instead of the FFT we had taken the Fourier transform of $\mathcal{F}$, the Fourier coefficients $\hat{a}_k$ would define coefficients $\hat{b}_k$, the Chebyshev orthogonal expansion
\[
f(x) = \sum_{k=0}^{+\infty}\hat{b}_k T_k(x),
\]
and the \textbf{Chebyshev projection}
\[
\hat{\pi}_n f = \sum_{k=0}^n \hat{b}_k T_k(x).
\]
The coefficients $a_k$ and $\hat{a}_k$ are related by the aliasing relation:
\[
   a_k = \sum_{p\in \mathbb{Z}} \hat{a}_{k+p 2n},
\] 
a-priori knowledge of the regularity of $f$ allows to estimate the aliasing error above.
\end{remark}

This foundational Theorem from \cite{Tr} estimates the decay rate of the Chebyshev coefficients.
\begin{theorem}\label{th:Tr}
For an integer $\nu\geq0$, let $f$ and its derivatives through $f^{(\nu-1)}$ be absolutely continuous on $[-1,1]$ and suppose the $\nu^{th}$ derivative $f^{(\nu)}$ is of bounded variation $V$. Then for $k\geq\nu+1$, the Chebyshev coefficients of $f$ satisfy
\begin{equation*}
|\hat{b}_k|\leq \frac{2V}{\pi k(k-1)\dots (k-\nu)}\leq \frac{2V}{\pi(k-\nu)^{\nu+1}}.
\end{equation*}
\end{theorem}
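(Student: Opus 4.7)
The plan is to switch to the angular variable $\theta=\arccos x$ on $[0,\pi]$ and induct on $\nu$. Using $T_k(x)=\cos(k\theta)$ together with $dx/\sqrt{1-x^2}=-d\theta$, I will first rewrite the Chebyshev coefficient as
\[
\hat b_k = \frac{2}{\pi}\int_{-1}^{1}\frac{f(x)T_k(x)}{\sqrt{1-x^2}}\,dx = \frac{2}{\pi}\int_0^\pi f(\cos\theta)\cos(k\theta)\,d\theta.
\]
Since $\theta\mapsto\cos\theta$ is a monotone bijection from $[0,\pi]$ onto $[-1,1]$, the total variation of $f^{(\nu)}\circ\cos$ on $[0,\pi]$ equals the total variation $V$ of $f^{(\nu)}$ on $[-1,1]$.

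For the base case $\nu=0$, where $f$ is merely BV, I would interpret the $\theta$-integral in the Riemann--Stieltjes sense and integrate by parts against the primitive $\sin(k\theta)/k$. The boundary terms vanish because $\sin(k\theta)=0$ at $\theta=0,\pi$, leaving
\[
\hat b_k=-\frac{2}{\pi k}\int_0^\pi \sin(k\theta)\,d[f(\cos\theta)], \qquad |\hat b_k|\le\frac{2V}{\pi k}.
\]

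For the inductive step, assume the bound for order $\nu-1$ and let $f$ satisfy the hypotheses of order $\nu\ge 1$. Since $f$ is absolutely continuous, a single Lebesgue integration by parts combined with the product-to-sum identity $2\sin\theta\sin(k\theta)=\cos((k-1)\theta)-\cos((k+1)\theta)$ produces the recursion
\[
\hat b_k = \frac{1}{2k}\bigl(\hat b_{k-1}^{(1)}-\hat b_{k+1}^{(1)}\bigr),
\]
where $\hat b_j^{(1)}$ is the $j$-th Chebyshev coefficient of $f'$. Because $f'$ has $\nu-1$ absolutely continuous derivatives with $(f')^{(\nu-1)}=f^{(\nu)}$ of variation $V$, the inductive hypothesis applies to $f'$ at indices $k\pm 1\ge\nu$ (so for $k\ge\nu+1$) and yields
\[
|\hat b_{k-1}^{(1)}|\le\frac{2V}{\pi(k-1)(k-2)\cdots(k-\nu)},\qquad |\hat b_{k+1}^{(1)}|\le\frac{2V}{\pi(k+1)k(k-1)\cdots(k-\nu+2)}.
\]
A factor-by-factor comparison shows that the second denominator dominates the first (each factor of $(k+1)k\cdots(k-\nu+2)$ exceeds the corresponding factor of $(k-1)(k-2)\cdots(k-\nu)$ by exactly $2$), so both terms are controlled by $2V/[\pi(k-1)(k-2)\cdots(k-\nu)]$. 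Substituting into the recursion closes the induction:
\[
|\hat b_k|\le\frac{1}{2k}\cdot\frac{4V}{\pi(k-1)(k-2)\cdots(k-\nu)}=\frac{2V}{\pi k(k-1)\cdots(k-\nu)}.
\]
The second inequality in the statement is immediate, since each of the $\nu+1$ factors $k,k-1,\dots,k-\nu$ is at least $k-\nu$, giving $k(k-1)\cdots(k-\nu)\ge(k-\nu)^{\nu+1}$.

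The only genuinely delicate point will be the base case, where one must read the integration by parts in the Riemann--Stieltjes sense because $f$ is only assumed BV rather than absolutely continuous; for all larger $\nu$ the integration by parts is the usual Lebesgue one, and the closing step is a short algebraic comparison with a small amount of slack to spare.
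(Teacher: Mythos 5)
Your proof is correct. The paper does not prove this statement at all---it quotes it from \cite{Tr} (it is Trefethen's coefficient-decay theorem)---and your argument is essentially the standard proof given in that reference: transplant to $\theta=\arccos x$, treat $\nu=0$ by Riemann--Stieltjes integration by parts against $\sin(k\theta)/k$ (using that the variation of $f\circ\cos$ equals that of $f$ since $\cos$ is monotone on $[0,\pi]$), and induct on $\nu$ via the recursion $\hat b_k=\bigl(\hat b^{(1)}_{k-1}-\hat b^{(1)}_{k+1}\bigr)/(2k)$, with the index bookkeeping ($k\pm1\ge\nu$ when $k\ge\nu+1$) and the factor-by-factor comparison of the two denominators handled correctly.
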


The decay rate of Chebyshev coefficients allows us to estimate the projection error in $C^0$ and
$C^1$ norm.
\begin{theorem}\label{thm:approx_error_C0}
If $f$ satisfies the conditions of Theorem \ref{th:Tr}, with $V$ again the total variation of $f^{(\nu)}$ for some $\nu\geq 1$, then for any $n\geq \nu$, its Chebyshev projection satisfies
\begin{equation*}
\norm{f-\pi_n f}_\infty\leq \frac{2V}{\pi \nu n(n-1)\dots(n+1-\nu)}
\end{equation*}
\end{theorem}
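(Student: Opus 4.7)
The plan is to reduce the estimate to a tail-of-series bound on Chebyshev coefficients, and then to evaluate that tail exactly by a telescoping sum.

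\medskip

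First, I would work with the orthogonal Chebyshev projection $\hat\pi_n f = \sum_{k=0}^n \hat b_k T_k$, for which $f - \hat\pi_n f = \sum_{k=n+1}^\infty \hat b_k T_k$. Since $\|T_k\|_\infty = 1$ on $[-1,1]$, the triangle inequality yields
\begin{equation*}
\|f - \hat\pi_n f\|_\infty \le \sum_{k=n+1}^\infty |\hat b_k|.
\end{equation*}
The hypothesis $n\ge \nu$ ensures that every $k \ge n+1$ satisfies $k \ge \nu+1$, so Theorem \ref{th:Tr} applies uniformly and gives $|\hat b_k| \le 2V/[\pi\, k(k-1)\cdots(k-\nu)]$ term-by-term.

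\medskip

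Second, I would evaluate the resulting tail sum in closed form. Setting $Q(k) := 1/[(k-1)(k-2)\cdots(k-\nu)]$, a direct algebraic check produces the telescoping identity
\begin{equation*}
Q(k) - Q(k+1) \;=\; \frac{k - (k-\nu)}{k(k-1)\cdots(k-\nu)} \;=\; \frac{\nu}{k(k-1)\cdots(k-\nu)}.
\end{equation*}
Because $Q(k) \to 0$ as $k \to \infty$, summing from $k=n+1$ collapses the series to
\begin{equation*}
\sum_{k=n+1}^\infty \frac{1}{k(k-1)\cdots(k-\nu)} \;=\; \frac{Q(n+1)}{\nu} \;=\; \frac{1}{\nu\, n(n-1)\cdots(n+1-\nu)}.
\end{equation*}
Multiplying through by $2V/\pi$ delivers the stated bound.

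\medskip

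The main obstacle is actually notational: the paper's $\pi_n$ is the interpolating projection, whereas the clean argument above is naturally stated for the orthogonal projection $\hat\pi_n$. To close this gap I would invoke the aliasing relation $a_k = \sum_{p\in\mathbb Z} \hat b_{k+2pn}$ from the remark following the definition of $\pi_n$: this re-expresses $\pi_n f - \hat\pi_n f$ in terms of the tail coefficients $\{\hat b_j\}_{j>n}$, so the difference is again dominated by the same tail sum already controlled by the telescoping identity (at worst up to a bounded multiplicative factor). The remaining ingredients — the bound $\|T_k\|_\infty = 1$, the applicability of Theorem \ref{th:Tr} under the hypothesis $n \ge \nu$, and the verification of the telescoping identity — are routine.
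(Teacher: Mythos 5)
Your core computation is correct and is in fact the standard argument for this statement: the paper does not prove Theorem \ref{thm:approx_error_C0} itself but imports it from \cite{Tr}, and the proof there is exactly your reduction to the coefficient tail via $\|T_k\|_\infty=1$, the term-by-term application of Theorem \ref{th:Tr} (valid since $k\ge n+1\ge \nu+1$), and the telescoping evaluation
\begin{equation*}
\sum_{k=n+1}^\infty \frac{1}{k(k-1)\cdots(k-\nu)}=\frac{1}{\nu\, n(n-1)\cdots(n+1-\nu)},
\end{equation*}
whose verification via $Q(k)-Q(k+1)=\nu/[k(k-1)\cdots(k-\nu)]$ is right. For the orthogonal projection $\hat\pi_n$ this already yields the bound with the constant $2V/\pi$, and nothing more is needed.

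The genuine gap is your final paragraph. The aliasing relation does not transfer the bound from $\hat\pi_n$ to the interpolating projection $\pi_n$ ``up to a bounded multiplicative factor'' that you can then ignore: the constant in the statement is explicit, so any loss matters. Carrying out the aliasing accounting, each neglected coefficient $\hat b_k$ with $k>n$ is counted once in the truncation error $f-\hat\pi_n f$ and once more through its aliased contribution to some retained coefficient $a_j$, $j\le n$, so the best you get for the interpolant is twice the tail sum, i.e.\ the constant $4V/\pi$ — which is precisely why Theorem 7.2 of \cite{Tr} states $2V$ for projections and $4V$ for interpolants. Hence your argument proves the stated inequality only if $\pi_n$ is read as the orthogonal (truncation) projection, in which case your first two steps are a complete proof and the last paragraph should simply be dropped; if $\pi_n$ is taken literally as the interpolating projection defined in the paper, the route you sketch cannot recover the constant $2V$, and you would have to either accept $4V$ or flag the factor-of-two discrepancy in the statement itself.
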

The following theorem is a consequence of combining theorem \ref{th:Tr}, with the proof of theorem 2.3 from \cite{XCW},
\begin{theorem}\label{thm:approx_error_C1}
If $f,f',\dots,f^{(\nu-1)}$ are absolutely continuous on $[-1,1]$ and if $\norm{f^{(\nu)}}_1=V<\infty$ for some $\nu\geq 0$, then for each $n\geq \nu+1$, we have that for $\nu>2$
\begin{equation*}
\norm{f'-(\pi_n f)'}_\infty\leq \frac{4(n+1)V}{n(\nu-2)\pi (n-2)(n-3)\dots (n+1-\nu)}.
\end{equation*}
\end{theorem}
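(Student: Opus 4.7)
The plan is to adapt the argument from the proof of Theorem 2.3 in \cite{XCW}, replacing its coefficient estimate with the sharper bound from Theorem \ref{th:Tr} above, and then summing the resulting series carefully to extract the stated factorial-type denominator.

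First I would expand $f$ in its (infinite) Chebyshev orthogonal series $f = \sum_{k=0}^{\infty}\hat{b}_k T_k$ and use the aliasing identity $a_k = \hat{b}_k + \sum_{p\neq 0}\hat{b}_{k+2np}$ that relates the interpolation coefficients $a_k$ produced by the FFT on the Chebyshev grid to the orthogonal-expansion coefficients $\hat{b}_k$ (this is the relation recalled in the remark following the definition of $\pi_n$). Subtracting $\pi_n f$ from $f$, this gives an explicit representation of the error $f-\pi_n f$ purely in terms of the tail coefficients $\{\hat{b}_j\}_{j\geq n+1}$.

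Next I would differentiate term-by-term. Using the standard identity $T_k' = k U_{k-1}$ together with $|U_{k-1}(x)|\leq k$ on $[-1,1]$, I obtain $|T_k'(x)|\leq k^2$. Inserting the coefficient bound $|\hat{b}_k|\leq \frac{2V}{\pi(k-\nu)^{\nu+1}}$ from Theorem \ref{th:Tr} splits the pointwise derivative error into the orthogonal-projection tail
\begin{equation*}
\sum_{k\geq n+1}\frac{2V k^{2}}{\pi(k-\nu)^{\nu+1}}
\end{equation*}
plus an aliasing correction, the latter controlled by the same kind of sum shifted by $2n$ in the index and therefore subdominant.

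Then I would evaluate the resulting scalar sum. The key algebraic step is to cancel the $k^2$ in the numerator against two of the factors of $(k-\nu)$ in the denominator, reducing the problem to controlling $\sum_{k\geq n+1}(k-\nu)^{-(\nu-1)}$; applying a telescoping inequality (equivalently, an integral comparison with $\int_n^\infty (x-\nu)^{-(\nu-1)}\,dx$) recovers both the full factorial-type denominator $(n-2)(n-3)\cdots(n+1-\nu)$ and the prefactor $1/(\nu-2)$ which is precisely the one that requires $\nu>2$ for convergence. The $(n+1)/n$ factor in the statement comes from bounding the leading term of this telescope and from passing between the value at $k=n+1$ and the product form. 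The aliased piece is absorbed into the same estimate, contributing only to the universal constant $4$.

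The main obstacle is the bookkeeping rather than any single hard inequality: getting the exact product $(n-2)(n-3)\cdots(n+1-\nu)$ instead of a loose $O(n^{-(\nu-2)})$ bound requires summation-by-parts (or a careful Abel rearrangement) that exactly mirrors the one in the proof of Theorem 2.3 of \cite{XCW}, and one must verify at each step that the aliasing contribution is subsumed into the tail-sum bound rather than producing a separate term of the same order. Once that bookkeeping is carried out with Theorem \ref{th:Tr} substituted for the corresponding coefficient bound in \cite{XCW}, the stated inequality follows.
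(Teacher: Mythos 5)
Your overall architecture is the same as the paper's: the paper also takes from the proof of Theorem 2.3 of \cite{XCW} the intermediate estimate $\norm{f'-(\pi_n f)'}_\infty \le 2\sum_{k\ge n+1}|a_k|k^2$ (which is exactly what your aliasing-plus-$|T_k'|\le k^2$ reconstruction produces, including the explanation of the factor $2$) and then inserts the coefficient decay of Theorem \ref{th:Tr}. So up to that point you are re-deriving what the paper simply cites.

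The gap is in the final summation. You propose to insert the weakened form $|\hat b_k|\le \frac{2V}{\pi(k-\nu)^{\nu+1}}$, ``cancel $k^2$ against two factors of $(k-\nu)$'', and compare with $\int_n^\infty (x-\nu)^{-(\nu-1)}\,dx$. That route cannot yield the stated constants: the cancellation costs a factor $\bigl(\tfrac{k}{k-\nu}\bigr)^2\le\bigl(\tfrac{n+1}{n+1-\nu}\bigr)^2$, not $\tfrac{n+1}{n}$, and the integral comparison gives $\tfrac{1}{(\nu-2)(n-\nu)^{\nu-2}}$, which is strictly larger than $\tfrac{1}{(\nu-2)(n-2)(n-3)\cdots(n+1-\nu)}$ because every factor $n-2,\dots,n+1-\nu$ of the product exceeds $n-\nu$. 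Hence the argument as described only proves a weaker inequality; the ``full factorial-type denominator'' cannot be recovered from the power-form coefficient bound, no matter how carefully the tail is summed. To obtain the theorem as stated you must keep the product form of Theorem \ref{th:Tr}, $|\hat b_k|\le \frac{2V}{\pi k(k-1)\cdots(k-\nu)}$: cancel a single $k$, use $\tfrac{k}{k-1}\le\tfrac{n+1}{n}$ for $k\ge n+1$, and then sum $\sum_{k\ge n+1}\frac{1}{(k-2)(k-3)\cdots(k-\nu)}$ exactly via the telescoping identity $\frac{\nu-2}{(k-2)\cdots(k-\nu)}=\frac{1}{(k-3)\cdots(k-\nu)}-\frac{1}{(k-2)\cdots(k-\nu+1)}$, which evaluates the tail to $\frac{1}{(\nu-2)(n-2)(n-3)\cdots(n+1-\nu)}$ and gives the claimed bound (this is, modulo typographical slips in its final display, exactly the computation behind the paper's proof).
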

\begin{proof}
From the proof of theorem 2.3 from \cite{XCW} we have
\begin{equation*}
\norm{f'-(\pi_n f)'}_\infty\leq 2\sum_{k=n+1}^\infty |a_k|k^2
\end{equation*}
and theorem \ref{th:Tr} then gives
\begin{align*}
\norm{f'-(\pi_n f)'}_\infty\leq &\sum_{k=n+1}^\infty \frac{4Vk^2}{\pi k(k-1)\dots (k-\nu)}\\
\leq &\frac{4(n+1)V}{n(k-2)\pi (n-2)(n-3)\dots (n+1-k)}.
\end{align*}
\end{proof}
We can use these theorems to bound the error of Chebyshev projections in the $C_1$ norm.

\subsubsection{Numerical remarks: FFT and Chebyshev}


It is important to have an explicit estimate of the error on the coefficients introduced by 
the FFT.
The main issue here is that, when we computing Chebyshev points and evaluating the function $f$
are not exact operation. To compute rigorous inclusions of the true mathematical value, we use Interval Arithmetics \cite{Tucker}.

This means that we need to compute the FFT of a vector of intervals, not of floating point numbers.
The following is the consequence of a classical result from \cite{Hi} that allows us
to find a vector of intervals that encloses the Fast Fourier Transform of any element
of the vector of the values. This allows us to use optimized implementations of the FFT
algorithm as FFTW \cite{FJ}.

\begin{lemma}
Let $\tilde{f}$ be a vector of intervals of dimension N, $f_m$ the vector of their midpoints, $f_r$ the vector of their radiuses. Let $\hat a$ be the computed FFT of $f_m$. Then
\begin{equation*}
\norm{\hat a-\bar a}_2\leq \frac{\log_2(N)}{\sqrt{N}}(\frac{\eta}{1-\eta}\norm{f_m}_2+\norm{f_r}_2).
\end{equation*}
where $\bar{a}$ 
is the exact FFT for any $f\in \tilde{f}$, $\eta=\mu+\gamma_4(\sqrt{2}+\mu)$ with $\mu$ the absolute error in the computation of the twiddle factors and $\gamma_4=4u/(1-4u)$ where $u$ is the unit roundoff.
\end{lemma}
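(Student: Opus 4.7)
The plan is to split the error $\hat a - \bar a$ into two contributions: a round-off contribution coming from running the FFT algorithm in floating point on the midpoint vector, and an ``interval width'' contribution coming from the fact that the exact FFT $\bar a$ can be of any representative $f\in\tilde f$, not just $f_m$. Writing $F$ for the exact DFT operator in the normalization used by the implementation and $\tilde F(f_m)$ for the floating-point output $\hat a$, we have
\begin{equation*}
\hat a - \bar a = \bigl(\tilde F(f_m) - F f_m\bigr) + F(f_m - f),
\end{equation*}
so by the triangle inequality
\begin{equation*}
\|\hat a - \bar a\|_2 \leq \|\tilde F(f_m) - F f_m\|_2 + \|F(f_m - f)\|_2.
\end{equation*}

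For the first term I would quote the Cooley--Tukey round-off analysis from Higham \cite{Hi}. Under the hypothesis that the twiddle factors are themselves computed with absolute error at most $\mu$ and the arithmetic obeys the standard floating-point model with unit roundoff $u$, each butterfly contributes a relative perturbation of size at most $\eta=\mu+\gamma_4(\sqrt2+\mu)$, and a radix-$2$ FFT performs $\log_2 N$ stages. Tracking these perturbations through the unitary (properly normalized) factorization of $F$ and applying Parseval gives a bound of the form
\begin{equation*}
\|\tilde F(f_m) - F f_m\|_2 \leq \frac{\eta\log_2 N}{1-\eta}\,\|F f_m\|_2 \leq \frac{\eta\log_2 N}{(1-\eta)\sqrt N}\,\|f_m\|_2,
\end{equation*}
where in the last step I use that in the normalization used by the implementation $F$ has operator $\ell_2$-norm $1/\sqrt N$ (this is precisely the factor that turns Higham's bound, which is relative to $\|F f_m\|_2$, into the stated absolute bound).

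For the second term I use the same operator-norm estimate on $F$ together with the hypothesis $|f_i-f_{m,i}|\le f_{r,i}$ componentwise, which gives
\begin{equation*}
\|F(f_m-f)\|_2 \leq \frac{1}{\sqrt N}\|f_m-f\|_2 \leq \frac{1}{\sqrt N}\|f_r\|_2 \leq \frac{\log_2 N}{\sqrt N}\|f_r\|_2,
\end{equation*}
where the final (wasteful) step of inserting the factor $\log_2 N \ge 1$ is done purely to pull out the common factor $\log_2(N)/\sqrt N$ in the combined bound. Adding the two estimates yields the claim.

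The main obstacle is matching Higham's textbook analysis to the precise error model used here: in particular verifying that the stated $\eta$ is exactly the per-stage amplification produced by combining the $\gamma_4$ bound on a complex butterfly with the absolute twiddle-factor error $\mu$, and confirming the normalization convention of the FFT so that the operator-norm bound $\|F\|_{2\to 2}=1/\sqrt N$ (and hence the $1/\sqrt N$ factor in the conclusion) is correct. Once the bookkeeping of $\eta$, $\log_2 N$ and the normalization is pinned down, the argument is a straightforward triangle inequality.
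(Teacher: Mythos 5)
Your proposal matches the paper's treatment: the paper gives no written proof of this lemma, presenting it as a direct consequence of Higham's classical FFT round-off analysis, and your decomposition of $\hat a-\bar a$ into the floating-point error on the midpoint vector (bounded via Higham) plus the exact transform of the enclosure error (bounded by $\norm{f_r}_2$ through the scaled-unitary norm $1/\sqrt{N}$ of the normalized DFT) is precisely the intended argument. The only bookkeeping caveat, which you already flag, is that Higham's theorem gives the relative factor $t\eta/(1-t\eta)$ with $t=\log_2 N$ rather than $t\,\eta/(1-\eta)$; this discrepancy is present in the paper's statement as well and is immaterial at the magnitudes of $\eta$ involved.
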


\subsubsection{Bounding the error on the invariant density}
Let $\pi_n$ be the Chebyshev projection and let
\begin{equation*}
\hat L_\eta=\pi_n \hat L_{\delta_k} \pi_n
\end{equation*}
be our finite rank approximation of $\hat L_{\delta_k}$.
\begin{lemma}\label{lem:WLY}
If $1/\hat{T}_{\delta_k}$ is of class $C^\nu$ then $\hat L_{\delta_k}$ admits Lasota-Yorke like inequalities of the form
\begin{equation*}
\norm{(\hat L_{\delta_k}^n f)^{(k)}}_1\leq (\lambda^k)^n\norm{f^{(k)}}_1+ \tilde A_k \norm{f}_{W^{k-1,1}},
\end{equation*} 
for some $\nu\in\mathbb{N}$ for $k=1,\dots,\nu$.
This implies that if $h_{\delta_k}$ is a fixed point of $\hat{L}_{\delta_k}$
\[
||h_{\delta_k}||_{W^{k, 1}}\leq \frac{A_{k}}{1-\lambda^k}||h_{\delta_k}||_{W^{k-1, 1}}   
\]
\end{lemma}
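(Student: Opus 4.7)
\medskip\noindent\textbf{Proof proposal.} The plan is to establish a single-step Lasota--Yorke-type inequality for $\hat L_{\delta_k}$ of the form
\[
\norm{(\hat L_{\delta_k} f)^{(k)}}_1 \le \lambda^k\,\norm{f^{(k)}}_1 + A_k\,\norm{f}_{W^{k-1,1}},
\]
and then iterate it to obtain the $n$-step form. Since $\hat L_{\delta_k} f(x) = \sum_\omega f\circ g_\omega(x)\, g_\omega'(x)$ is a finite sum of $C^\nu$ contributions (because $\hat T_{\delta_k}$ has finitely many branches and $1/\hat T_{\delta_k}\in C^\nu$), I would differentiate $k$ times branchwise using the Fa\`a di Bruno / generalised product rule. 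The highest-order term is the ``principal term''
\[
\sum_\omega f^{(k)}\!\circ g_\omega \cdot (g_\omega')^{k+1},
\]
and all remaining terms involve $f^{(j)}\circ g_\omega$ for $j<k$ multiplied by polynomials in the derivatives $g_\omega',\ldots,g_\omega^{(k)}$; the latter are controlled uniformly in $\omega$ by the $C^\nu$ regularity of $1/\hat T_{\delta_k}$.

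To extract the contraction factor I would integrate in absolute value and perform the change of variables $y = \hat T_{\delta_k}(x)$ on each branch, which absorbs one factor of $g_\omega'$ and yields
\[
\int_\Delta \Bigl|\sum_\omega f^{(k)}\!\circ g_\omega \cdot (g_\omega')^{k+1}\Bigr|\,dm \le \Bigl(\sup_\omega \norm{g_\omega'}_\infty\Bigr)^{k}\norm{f^{(k)}}_1,
\]
which gives the required $\lambda^k$-contraction with $\lambda = \sup_\omega \norm{g_\omega'}_\infty < 1$. The remaining lower-order terms, after the same change of variables, are bounded by a constant (depending on the $C^\nu$ norms of the inverse branches and on $\nu$) times $\norm{f}_{W^{k-1,1}}$, producing $A_k$. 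Iterating the single-step inequality and absorbing the sub-geometric growth of the lower-order Sobolev norms via an induction on $k\le \nu$ (using the already-established LY inequalities at orders $0,1,\ldots,k-1$ to control $\norm{\hat L_{\delta_k}^{j}f}_{W^{k-1,1}}$ by $\norm{f}_{W^{k-1,1}}$ up to a constant) gives the $n$-step form with $\tilde A_k$ independent of $n$.

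For the fixed-point consequence, apply the one-step inequality to $f = h_{\delta_k}$ and use $\hat L_{\delta_k} h_{\delta_k} = h_{\delta_k}$ to obtain
\[
\norm{h_{\delta_k}^{(k)}}_1 \le \lambda^k \norm{h_{\delta_k}^{(k)}}_1 + A_k\,\norm{h_{\delta_k}}_{W^{k-1,1}},
\]
from which the claim follows by rearrangement and the definition $\norm{h}_{W^{k,1}} = \norm{h}_{W^{k-1,1}} + \norm{h^{(k)}}_1$ (absorbing the extra summand into the constant). The main technical obstacle is the careful bookkeeping of the Fa\`a di Bruno terms to show that the lower-order contributions can genuinely be absorbed into $\norm{f}_{W^{k-1,1}}$ with a constant that depends only on $\norm{1/\hat T_{\delta_k}}_{C^\nu}$ and not on $\delta_k$, so that the same LY inequality holds uniformly in $\delta_k$ -- this uniformity is what makes the bound useful for Lemma~\ref{lem:GeneralisedFixedPointLemma} downstream.
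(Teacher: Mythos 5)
Your proposal is correct and is essentially the argument the paper has in mind: the paper does not write out a detailed proof of this lemma, but the technique it uses (Subsection~\ref{ss:autoly}) is exactly your computation, organised via the recursion $(L_k f)' = L_{k+1}f' + kL_k(fD)$ of \eqref{eq:der} from \cite{BKL}, whose iterates separate the principal term $L_{k+1}f^{(k)}$ (contracting by $\lambda^k$ after the change of variables, since $\|L_{k+1}g\|_1\le\|(1/\hat T')^k\|_\infty\|g\|_1$) from the distortion terms absorbed into $\norm{f}_{W^{k-1,1}}$, in place of your hand-written Fa\`a di Bruno bookkeeping. Your remaining steps (induction on $k$ to pass to the $n$-step inequality with $\tilde A_k$ independent of $n$, uniformity in $\delta_k$ because the added linear branch only improves the relevant suprema, and the rearrangement for the fixed point, with the harmless extra summand absorbed into the constant) all match what the paper uses downstream.
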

\begin{remark}
The Lasota-Yorke inequalities give us an upper bound on the $W^{k,1}$ norm of the fixed point.
This, together with Theorems \ref{thm:approx_error_C0} and \ref{thm:approx_error_C1} permit us 
to control the discretization error.
To estimate our error, we need to compute the constants of this Lasota-Yorke inequality explicitly, we refer to Subsection \ref{ss:autoly} for the tecnique we used.
\end{remark}

\begin{remark}\label{rem:hetabound}
We can use theorems \ref{thm:approx_error_C0} and \ref{thm:approx_error_C1}, together with lemma \ref{lem:WLY} to get a bound on $||\hat L_{\delta_k}-\hat L_n||_{C^1}$ and the techniques from section \ref{NR:Convergence} to with lemma \ref{lem:GeneralisedFixedPointLemma} using operators $\hat L_{\delta_k}$ and $\hat L_n$ in order to bound $\norm{\hat h_{\delta_k}-\hat h_n}_{C^1}$ explicitly.

This approach is now quite established, a full treatment can be found in \cite{GN, GMNP}. 
\end{remark}
\begin{remark}\label{rem:integralinjection}
The discretized operator obtained by the Chebyshev discretization does not preserve the
value of the integral.
To solve this issue, as in \cite{GMNP} we correct the behaviour of the discretized operator 
by defining a new operator
\[
\hat{Q}_n f = \hat{L}_{n}f + 1 \cdot (\int f dx- \int \hat{L}_{n}f dx)   
\]
which is guaranteed to preserve the space of average $0$ measure and has eigenvalue $1$,
since the row vector that contains the integrals of the basis elements is preserved by multiplication on the left.
\end{remark}

\subsubsection{Numerical remarks: convergence rates}\label{NR:Convergence}
The problem of bounding the error in $C^1$ is now reduced to estimate 
$c_N$ such that $\norm{\hat L_{n}^N|_{U^0}}_{C^1}\leq c_N$.
Since the operator $L_{n}$ is of finite rank, we can use numerical methods to 
compute these quantities in a rigorous way.

Given a basis $\{e_i\}_i$ of $U^0$ normalized with respect to the $C^1$ norm,
a generic function $v$ in $U^0$ is written as $v=\sum_{i=1}^N b_i e_i$. We want to find a
constants $C_k$ such that
\begin{equation*}
||\hat L_{n}^k v||_{C^1}\leq C_k||v||_{C^1}.
\end{equation*}
If a $c_k$ exists such that for all basis elements $e_i$ we have
\begin{equation*}
||\hat L_{n}^k e_i||_{C^1}\leq c_k.
\end{equation*}
then 
\begin{equation*}
||\hat L_{n}^k v||_{C^1}\leq c_k\sum_{i=1}^N|b_i|\leq c_k ||b||_{\ell^1}
\end{equation*}
where the $\ell_1$ norm is the linear algebra norm on the coefficients; we will exhibit a constant $D$ such that $||.||_{\ell^1}\leq D||.||_{C^1}$;
then $C_k \leq Dc_k$.

A basis of $U^0$ in the Chebyshev basis  is given by
\begin{align*}
e_i=&\frac{g_i}{\norm{g_i}_{C^1}}, \quad\textrm{where}\quad g_i(x)=T_i(x)-\int_{-1}^1T_i(x)dx.
\end{align*}
We can link the Chebyshev coefficients $a_i$ and $b_i$ by
\begin{equation*}
a_0=-\sum_{i=1}^{N}b_i\int_{-1}^1T_i(x)dx
\end{equation*}
and $a_i=b_i$ for $i>0$.
We can use theorem \ref{th:Tr} to say
\begin{equation*}
|a_k|\leq \frac{\norm{v}_{C^1}}{k}
\end{equation*}
and therefore
\begin{equation*}
||b||_{\ell^1} = \sum_{k=1}^N|b_i|\leq \sum_{k=1}^N\frac{\norm{v}_{C^1}}{k}\leq \log(N+1)\norm{v}_{C^1}
\end{equation*}
so we have $D=\log(N+1)$ and
\begin{equation*}
\norm{\hat L_{n}^k v}_{C^1}\leq c_k\log(N+1)\norm{v}_{C^1}.
\end{equation*}

Computationally if we take $N$ functions
\begin{equation*}
\hat e_i=\frac{g_i}{\norm{g_i}_{C^1}^-},
\end{equation*}
where $\norm{g_i}_{C^1}^-$ is a lower bound on the $C^1$ norm of $g_i$ and if we calculate each
\begin{equation*}
\norm{\hat L_{n}^k e_i}_{C^1}
\end{equation*}
then take the maximum value and call it $\hat c_k$, then $\hat c_k\log(N+1)$ is an upper bound on the $C^1$ contraction of $\norm{\hat L_{n}^k|_{U^0}}_{C^1}$.

It is important to explain how we compute an upper bound for the $C^1$ norm: we use a classical
optimization algorithm in IntervalArithmetics \cite{Tucker} that allows us to give a certified upper
bound, implemented in the Julia package IntervalOptimisation.jl.
The main issue here is that the Clenshaw algorithm is prone to overestimation when evaluated on intervals \cite{LedouxMoroz}
; to solve this we extended the algorithms in 
\cite{LedouxMoroz} to get tighter bound for the maximum of a Chebyshev polynomial and its
derivative.

\section{Approximating the linear response for the induced system}\label{sec:InducedResponse}
We now provide an approximation of $\hat h^*$ in the $C^0$-norm, through the use of the hat
approximation; we refer to \cite{GMNP,GN} for an in deep treatment of the hat discretization.
\begin{definition}\label{def:Hat}
The hat projection, is a projection $\Pi_h: C^0([0.5,1])\to C^0([0.5,1])$;
let $\{x_i\}$ be $h:=1/\eta+1$ equispaced points in $[0.5, 1]$, such that
$|x_i-x_{i-1}|=\eta$.

We define  
\[
\phi_i(x) = \left\{
\begin{array}{cc} 
\frac{|x-x_i|}{\eta} & \textrm{if }|x-x_i|\leq \eta \\
0 & \textrm{if }|x-x_i|> \eta
\end{array}\right.   
\]
\begin{equation*}
\Pi_h f(x)= \sum_{i=0}^h f(x_i)\phi_i(x).
\end{equation*}
The Hat discretization of the transfer operator $\hat L_{\delta_k}$ is defined by
\[
\hat L_h :=\Pi_h\hat L_{\delta_k}\Pi_h.
\]

In a similar fashion as in Remark \ref{rem:integralinjection}, this operator does not preserve the integral
so an auxiliary operator is defined, $\hat P_h:C^0([0.5, 1])\to C^0([0.5, 1])$
\[
\hat P_h f = \hat L_w f + 1\cdot \left(\int \hat L_w f - \int \hat L f\right)
\] 
where $1$ in the equation above is the constant function equal to $1$ on $[0.5, 1]$.

Since $L_{\delta_k}, \hat L_w$ and $\hat P_h$ satisfy a uniform Lasota-Yorke inequality
with respect to the Lipschitz seminorm and the $C^0$ norm and 
\[
||\pi f-f||_{\infty}\leq \textrm{Lip(f)}\eta   
\]
there exists a constant $K$ such that 
\[
||\hat P_h f-\hat L_{\delta_k}f||_{\infty}\leq K (||f||_{C^0}+\textrm{Lip}(f))\eta;  
\]
we refer again to \cite{GMNP, GN}.
\end{definition}

\subsection{Error in approximating the linear response}   
In order to approximate the linear response in the induced map we will need to use an approximation.

\begin{definition}
We define 
\[
W(x)=\hat L [A_0\hat h'+B_0\hat h](x).
\]
\end{definition}
\begin{definition}
Let
\begin{equation*}
   W_h(x)=a_0\phi_0(x)+\sum_{i=0}^h\frac{a_{i-1}+a_i}{2}\phi_i(x)+a_n\phi_n(x),
\end{equation*}
   where $\phi_i(x)$ are the hat functions in the definition of \ref{def:Hat} and
   \begin{align*}
   a_i =&\sum_{\omega\leq k} \frac{1}{\eta}\int_{g_\omega(I_i)}A_0(z)\cdot \hat h_n'(\zeta)+B_0(z)\hat h_n(\zeta)dz+\frac{1}{\eta}\int_{\delta_k(2I_i-1)+1/2}[A_0\hat h'_\eta+B_0\hat h_n](z)dz,
   \end{align*}
   with $\zeta$ chosen arbitrarily in $g_{\omega}(I_i)$ for each $\omega, I_i$.
\end{definition}
\begin{remark}\label{rem:WW_h}
It is possible to bound
\[
\left|a_i-\sum_{\omega}\frac{1}{\eta} \int_{g_\omega(I_i)}A_0(z)\cdot \hat h_n'(z)+B_0(z)\hat h_n(z)dz \right|= \left|a_i-\frac{1}{\eta}\int_{I_i} \hat L(A_0 h'+B_0 h) \right|
\]
as a decreasing function of $k$ and $\eta$, as done in subsection \ref{a:estimateshstar3}.
Let 
\[
\tilde{a}_i = \frac{1}{\eta}\int_{I_i} \hat L(A_0 h'+B_0 h);
\]
and 
\[
W_{\infty}(x) = \sum_{i=0}^h \tilde{a}_i \phi_i(x).
\]
Then 
\begin{align*}
||W_h-W||_{C^0}&\leq ||W_h-W_{\infty}||_{C^0}+||W_{\infty}-\Pi_w W||_{C^0}+||\Pi_w W-W||_{C^0}\\
&\leq ||W_h-W_{\infty}||_{C^0}+2\textrm{Lip}(W)\eta.
\end{align*}
This implies that the $C^0$ distance between $W_h$ and $W$ can be made as small as we 
want.
\end{remark}
   
\begin{definition}
Let 
\begin{equation}\label{eq:inducedresponseapprox}
\hat h_\eta^*:=\sum_{n=0}^{l^*}\hat Q_{w}^{n} W_h
\end{equation}
where $\hat P_h$ is the hat approximation of $\hat L_{\delta_k}$.
\end{definition}

%

\begin{lemma}\label{thm:induced_resp}
Recall from \eqref{eq:response_ind} that
\begin{equation*}
\hat h^* =(I-\hat L)^{-1}\hat L[A_0\hat h'+B_0\hat h]
\end{equation*}
We have
\begin{equation}
\begin{split}
\|\hat h_\eta^*-\hat h^*\|_{C^0}\leq&\sum_{n=0}^{l^*}\sum_{i=0}^{n}\norm{(\hat L-\hat L_{\delta_k})\hat P_h^{n-i}W_h}_{C^0}+\sum_{n=0}^{l^*}\sum_{i=0}^{n}\norm{(\hat L_{\delta_k}-\hat Q_{w})\hat P_h^{n-i}W_h}_{C^0}\\
+&l^*\norm{(W_h-W)}_{C^0}+\sum_{n=l^*+1}^{\infty}\norm{\hat L^n W}_{C^0}.
\end{split}
\end{equation}
\end{lemma}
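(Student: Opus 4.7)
The plan is to decompose $\hat h^* - \hat h_\eta^*$ into three error sources: (i) the truncation of the Neumann series at index $l^*$; (ii) the error in the inhomogeneous term $W$ vs.\ $W_h$; and (iii) the error from replacing the true induced transfer operator $\hat L$ with the corrected finite-rank approximation $\hat Q_w$, pathing through the intermediate operator $\hat L_{\delta_k}$. The final bound then arises from telescoping the operator errors.

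First I would rewrite $\hat h^* = (I-\hat L)^{-1}W = \sum_{n=0}^{\infty}\hat L^n W$, which is legitimate since $\hat L$ has a spectral gap (the formula \eqref{eq:response_ind} is already meaningful and the series converges in $C^0$). Then I would split
\begin{equation*}
\hat h^* - \hat h_\eta^* \;=\; \sum_{n=l^*+1}^{\infty}\hat L^n W \;+\; \sum_{n=0}^{l^*}\bigl(\hat L^n W - \hat Q_w^n W_h\bigr),
\end{equation*}
which immediately produces the tail $\sum_{n=l^*+1}^{\infty}\|\hat L^n W\|_{C^0}$. For the finite sum I would add and subtract $\hat L^n W_h$:
\begin{equation*}
\hat L^n W - \hat Q_w^n W_h \;=\; \hat L^n(W-W_h) \;+\; (\hat L^n - \hat Q_w^n)W_h.
\end{equation*}
Using that $\hat L$ is a Markov operator (so $\|\hat L^n\|_{C^0\to C^0}$ is uniformly controlled), summing the first piece over $n=0,\dots,l^*$ yields the term $l^*\|W-W_h\|_{C^0}$.

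For the second piece I would apply the standard telescoping identity
\begin{equation*}
\hat L^n - \hat Q_w^n \;=\; \sum_{i=0}^{n-1}\hat L^{n-1-i}(\hat L - \hat Q_w)\,\hat Q_w^i,
\end{equation*}
and then split the single-step error through the countable-branch approximation:
\begin{equation*}
\hat L - \hat Q_w \;=\; (\hat L - \hat L_{\delta_k}) \;+\; (\hat L_{\delta_k} - \hat Q_w).
\end{equation*}
Applying both splittings to $W_h$ and taking $C^0$ norms, using once more the uniform control on $\|\hat L^{n-1-i}\|_{C^0\to C^0}$, yields precisely the two double sums in the statement (with $i$ reindexed so that $\hat Q_w^{n-i}$ sits between the error factor and $W_h$). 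Combining the four pieces gives the stated bound.

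The main obstacle I anticipate is the uniform $C^0$ control of the iterates $\hat L^{n-1-i}$ that is swept under the rug when passing from the telescoping identity to the norm inequality: one must argue, either via the spectral gap of $\hat L$ on the relevant invariant subspace or via the uniform Lasota--Yorke estimates of Lemma~\ref{lem:WLY} applied to $\hat L$, that $\|\hat L^j\|_{C^0\to C^0}$ is bounded by a fixed constant. A secondary bookkeeping issue is reconciling the notation between $\hat P_h$, $\hat Q_w$, and $\hat L_h$ so that the telescoping is carried out with the integral-preserving corrected operator (as in Remark~\ref{rem:integralinjection}); this is a notational rather than a conceptual difficulty, since the correction only ensures that the operator acts consistently on the space of candidate densities and does not change the argument structure.
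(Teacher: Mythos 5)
Your proposal is correct and follows essentially the same route as the paper: truncate the Neumann series $\hat h^*=\sum_n \hat L^n W$ at $l^*$ to get the tail term, add and subtract $\hat L^n W_h$ to isolate $l^*\|W-W_h\|_{C^0}$, telescope $\hat L^n-\hat P_h^n$, and split the one-step error through $\hat L_{\delta_k}$. The two points you flag as potential obstacles are handled in the paper exactly as you anticipate (the iterates are controlled via $\|\hat L^i|_{U^0}\|_{C^0}$, implicitly bounded by $1$ in the statement, and the $\hat P_h$/$\hat Q_w$/$\hat L_w$ notation is indeed used interchangeably there), so there is nothing missing.
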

\begin{proof}
First of all we observe that  by \eqref{eq:response_ind} and the definition of $\hat h_{\eta}^*$ we have
\begin{align*}
\norm{\hat h^*-\hat h_\eta^*}_{C^0}=&\norm{\sum_{n=0}^\infty \hat L^n W-\sum_{n=0}^{l^*}\hat P_h^n W_h}_{C^0}\\
\leq&\norm{\sum_{n=0}^{l^*}\hat L^n W-\sum_{n=0}^{l^*}\hat P_h^n W_h}_{C^0}+\sum_{n=l^*+1}^{\infty}\norm{\hat L^n W}_{C^0}
\end{align*}

The estimate follows by direct calculation. Indeed, 
using the triangle inequality and the second resolvent identity, we have that
\begin{align*}
&\norm{\sum_{n=0}^{l^*}\hat L^n W_h-\sum_{n=0}^{l^*}\hat P_h^n W_h}_{C^0}+\norm{\sum_{n=0}^{l^*}(\hat L^n W-\hat L^n W_h)}_{C^0}+\sum_{n=l^*+1}^{\infty}\norm{\hat L^n W}_{C^0}\\
\leq& \sum_{n=0}^{l^*}\sum_{i=0}^{n}\norm{\hat L^i(\hat P_h-\hat L)\hat P_h^{n-i}W_h}_{C^0}+\sum_{n=0}^{l^*}\norm{\hat L^n(W_h-W)}_{C^0}+\sum_{n=l^*+1}^{\infty}\norm{\hat L^n W}_{C^0}\\
\leq& \sum_{n=0}^{l^*}\sum_{i=0}^{n}\norm{\hat L^i|_{U^0}}_{C^0}\norm{(\hat P_h-\hat L_{\delta_k})\hat P_h^{n-i}W_h}_{C^0}+\sum_{n=0}^{l^*}\sum_{i=0}^{n}\norm{\hat L^i|_{U^0}}_{C^0}\norm{(\hat L_{\delta_k}-\hat L)\hat P_h^{n-i}W_h}_{C^0}\\
&+\sum_{n=0}^{l^*}\norm{\hat L^n|_{U^0}}_{C^0}\norm{(W_h-W)}_{C^0}+\sum_{n=l^*+1}^{\infty}\norm{\hat L^n W}_{C^0}
\end{align*}
\end{proof}
\begin{remark} The estimates in Lemma \ref{thm:induced_resp} can all be made as small as desired. Indeed, notice that $W$ is a zero average $C^1$ function; therefore
\begin{itemize}
\item the Lipschitz and $C^0$ norm of $P_h W_h$ can be estimated explictly,
\item the last summand $\sum_{n=l^*+1}^{\infty}\|\hat L^{n+1}W||_{C^0}$ can be made small, for sufficiently large $l^*$, since $\hat L$ admits a spectral gap when acting on $C^0$. Once this term is estimated, $l^*$ is fixed once and for all;
\item the summand $\sum_{n=0}^{l^*}\sum_{i=1}^{n}\|(\hat L-\hat L_{\delta_k})\hat P_h^{n-i}W_h\|_{C^0}$ can be made small by choosing $\delta_k$ small enough;
\item the summand $\sum_{n=0}^{l^*}\sum_{i=1}^{n}\|(\hat L_{\delta_k}-\hat P_h)\hat P_h^{n-i}W_h\|_{C^0}$ can be made small by choosing $\eta$, the size of the hat discretization, small enough;
\item the term $\norm{\hat L^n|_{U^0}}_{C^0}\norm{(W_h-W)}_{C^0}$ can be made small by reducing $\delta_k$ and $\eta$.
\end{itemize}
\end{remark}
\section{Normalising the density and the linear response.}\label{sec:normalizing}
Ultimately the goal is to approximate the dynamics of the system, so we would like the invariant measure to be a probability measure. This is not always possible for maps with indeterminate fixed points, however it was shown in \cite{BT} that a fixed point of the transfer operator of an LSV map is bounded above by $Cx^{-\alpha}$, for some constant $C$, giving a maximum integral of $\frac{C}{1-\alpha}$, so we can make our calculated density a probability density by normalising with respect to its integral, which will give us a new error of
\begin{align*}
\norm{\frac{h_n}{\int h_n dm}-\frac{h}{\int h dm}}_1&=\norm{\frac{h_n}{\int h_n dm}-\frac{h}{\int h_\eta dm}+h\frac{\int h dm-\int h_n dm}{\int h dm \int h_n dm}}_1\\
\leq&\frac{\norm{h-h_n}_1}{\int h_n dm}+\frac{\norm{h}_1}{\int h dm}\frac{\norm{h-h_n}_1}{\int h_n dm}\leq2\frac{\norm{h-h_n}_1}{\int h_n dm}
\end{align*}
and if we ensure that the integral is preserved throughout the approximation then the error is $\frac{\norm{h^*-h^*_\eta}_1}{\int h^*_\eta dm}$. 

Since the Chebyshev approximation does not preserve the integral we use the first estimate to 
bound 
\[
\norm{\frac{h_n}{\int h_n dm}-\frac{h}{\int h dm}}_1
\]
where we calculate the integral of $h_n$ by
\begin{equation*}
\int_0^1F^{app}\hat h_ndx = \int_{0.5}^1\hat h_ndx+\sum_{|\omega|=1}^{N^*}H_n\circ g_\omega(1)-H_n\circ g_\omega(0.5)
\end{equation*}
where $H_n(x)=\int_{0.5}^xh_ndx$.

The linear response for the normalised invariant density is then $\rho^*$ such that
\begin{equation*}
 \lim_{\epsilon\to 0}\norm{\frac{\frac{h}{\int h dm}-\frac{h_\epsilon}{\int h_\epsilon dm}}{\epsilon}-\rho^*}_1 = 0
\end{equation*}
we get from this
\begin{align*}
\norm{\frac{\frac{h}{\int h dm}-\frac{h_\epsilon}{\int h_\epsilon dm}}{\epsilon}-\rho^*}_1=&\norm{\frac{\frac{h}{\int h dm}-\frac{h_\epsilon}{\int h+\epsilon h^*+o(\epsilon^2) dm}}{\epsilon}-\rho^*}_1\\
=&\norm{\frac{\frac{h}{\int h dm}-\frac{h_\epsilon}{\int h dm}}{\epsilon}-\frac{h_\epsilon\int [h^*+o(\epsilon)] dm}{\int h dm (\int [h+\epsilon h^*+o(\epsilon^2)] dm)}-\rho^*}_1
\end{align*}
which tells us that
\begin{equation*}
\rho^*=\frac{h^*}{\int h dm}-\frac{h\int h^* dm}{(\int h dm)^2}.
\end{equation*}

Letting 
\[A=\frac{h^*}{\int h dm}-\frac{h^*_\eta}{\int h_\eta dm}\quad\,  B=-\frac{h\int h^* dm}{(\int h dm)^2}+\frac{h_n\int h^*_\eta dm}{(\int h_n dm)^2}\] 
the error on the normalised linear response is calculated as follows,
\begin{align*}
&\norm{\frac{h^*}{\int h dm}-\frac{h\int h^* dm}{(\int h dm)^2}-\frac{h^*_\eta}{\int h_n dm}+\frac{h_n\int h^*_\eta dm}{(\int h_n dm)^2}}_1\\
=&\norm{\frac{h^*}{\int h_n dm}-\frac{h^*_\eta}{\int h_n dm}+\frac{h^*\int (h_n-h)dm}{\int h dm \int h_n dm}+B}_1\\
=& \norm{\frac{h^*-h^*_\eta}{\int h_n dm}+\frac{h^*\int (h_n-h)dm}{\int h dm \int h_n dm}+B}_1\\
=&\norm{A+\frac{h_n\int h^*_\eta dm}{(\int h_n dm)^2}-\frac{h\int h^* dm}{(\int h dm)^2}}_1,
\end{align*}
the expression above can be bounded from above by
\begin{align*}
&\norm{A+\frac{h_n\int h^*_\eta dm}{(\int h_n dm)^2}-\frac{(h_n+(h-h_n))(\int h^*_\eta dm-\norm{h^*-h^*_\eta}_1)}{(\int h_n dm+\norm{h-h_n}_1)^2}}_1\\
=&\left\lVert A-\frac{(h-h_n)(\int h^*_\eta dm-\norm{h^*-h^*_\eta}_1)-h_n\norm{h^*-h^*_\eta}_1}{(\int h_n dm+\norm{h-h_n}_1)^2}\right.\\
&+\left.\frac{h_n \int h_\eta^*dm(2\int h_n dm \norm{h-h_n}_1+ \norm{h-h_n}_1^2)}{(\int h_n dm+\norm{h-h_n}_1)^2(\int h_n)^2}\right\rVert_1.\\
\end{align*}

This allows us to bound the $L^1$ error of the normalised linear response by
\begin{align}\label{eq:finalbound}
&\frac{\norm{h^*-h_\eta^*}_1}{\norm{h_n}_1}+\frac{(\norm{h_\eta^*}_1+\norm{h^*-h^*_\eta}_1)\norm{h-h_n}_1}{(\norm{h_n}_1-\norm{h-h_n}_1)\norm{h_n}_1}\\
+&\frac{\norm{h-h_n}_1(\norm{h^*_\eta}_1-\norm{h^*-h^*_\eta}_1)}{(\norm{h_n}_1+\norm{h-h_n}_1)^2}+\frac{\norm{h_n}_1\norm{h^*-h^*_\eta}_1}{(\norm{h_n}_1+\norm{h-h_n}_1)^2}\nonumber\\
+&\frac{2\norm{h_\eta^*}_1\norm{h-h_n}_1}{(\norm{h_n}_1+\norm{h-h_n}_1)^2}+\frac{\norm{h_\eta^*}_1\norm{h-h_n}_1^2}{(\norm{h_n}_1+\norm{h-h_n}_1)^2\norm{h_n}_1}\nonumber
\end{align}

\section{Proof of Theorem \ref{thm:main}}\label{sec:mainthm}
In this section we give a proof of the main result in the paper, i.e., that we can approximate as well
as we want the linear response.
\begin{proof}[Proof of Theorem \ref{thm:main}]
Using \eqref{eq:approxh^*} we have
\begin{equation}
\begin{split}
\|h^*-h^*_\eta\|_1&\le\|F_0\hat h^*-F_0^{\text{app}}\hat h_\eta^*\|_1+\|Q\hat h-Q^{\text{app}}\hat h_n\|_1\\
&\le\|F_0\hat h^*-F_0\hat h^*_\eta\|_1+\|F_0\hat h^*_\eta-F_0^{\text{app}}\hat h_\eta^*\|_1\\
&\hskip 0.5cm+\|Q\hat h-Q\hat h_n\|_1+\|Q\hat h_n-Q^{\text{app}}\hat h_n\|_1\\
&:=(I)+(II)+(III)+(IV).
\end{split}
\end{equation}
By \eqref{theF}, we get
\begin{equation}\label{eq:estI}
\begin{split}
(I)&\le \|\hat h^*-\hat h^*_\eta\|_1+ \sum_{\omega\in\Omega}\int_{\Delta^c}\left|\left(\hat h^*\circ g_{\omega}-\hat h^*_\eta\circ g_{\omega}\right)g_{\omega}'\right|dx\\
&\le\|\hat h^*-\hat h^*_\eta\|_1+\|\hat h^*-\hat h^*_\eta\|_{C^0}\cdot \sum_{\omega}\|g_\omega'\|_{\mathcal B}\int_{\Delta^c}x^{-\gamma} dx\\
&\le\|\hat h^*-\hat h^*_\eta\|_1+\|\hat h^*-\hat h^*_\eta\|_{C^0}\cdot \frac{1}{2^{1-\gamma}(1-\gamma)} \cdot \sum_{\omega}\|g_\omega'\|_{\mathcal B}.
\end{split}
\end{equation}
Using \eqref{theF} again, we have
\begin{equation}\label{eq:estII}
\begin{split}
(II)&\le  \sum_{|[\omega]|>N^*}\int_{\Delta^c}\left|\hat h^*_\eta\circ g_{\omega}g_{\omega}'\right|dx\le\|\hat h^*_\eta\|_{C^0}\cdot \sum_{|[\omega]|>N^*}\|g_\omega'\|_{\mathcal B}\int_{\Delta^c}x^{-\gamma} dx\\
&\le \frac{1}{2^{1-\gamma}(1-\gamma)}\cdot\|\hat h^*_\eta\|_{C^0}\cdot \sum_{|[\omega]|>N^*}\|g_\omega'\|_{\mathcal B}.
\end{split}
\end{equation}
Note that by \eqref{a4'}, one can choose $N^*$ large enough so that $(II)$ is sufficiently small. Using \eqref{eq:Q}, we have
\begin{equation*}
\begin{split}
(III)&\le \sum_{\omega\in\Omega}\int_{\Delta^c}\left |\left(\hat h'\circ g_\omega-\hat h_n'\circ g_\omega\right) \cdot a_\omega g_\omega' \right|dx+ \sum_{\omega\in\Omega}\int_{\Delta^c}\left|\left(\hat h\circ g_\omega-\hat h_n\circ g_\omega\right)\cdot b_\omega\right|dx.
\end{split}
\end{equation*}
Now using \eqref{a4.0}, \eqref{a5}, and the change of variables $y_\omega=g_{\omega}(x)$ we get\begin{equation}\label{eq:estIII}
\begin{split}
(III)&\le\sup_{\omega}|a_\omega| \sum_{\omega\in\Omega}\int_{[\omega]_0}\left |\hat h'( y_\omega)-\hat h_n'(y_\omega) \right|dy_\omega+ \|\hat h-\hat h_n\|_{C^0}\cdot \sum_{\omega\in\Omega}\| b_\omega\|_{\mathcal B}\int_{\Delta^c}x^{-\gamma} dx\\
&=\sup_{\omega}|a_\omega|\cdot\|\hat h'-\hat h'_n\|_1+\frac{1}{2^{1-\gamma}(1-\gamma)}\cdot \sum_{\omega\in\Omega}\| b_\omega\|_{\mathcal B}\cdot \|\hat h-\hat h_n\|_{C^0}\\
&\le \max\left\{\sup_{\omega}|a_\omega|, \frac{1}{2^{1-\gamma}(1-\gamma)}\cdot \sum_{\omega\in\Omega}\| b_\omega\|_{\mathcal B}\right\}\cdot \|\hat h-\hat h_n\|_{C^1}.
\end{split}
\end{equation}
Finally, using \eqref{eq:Q} again, we have
\begin{equation*}
\begin{split}
(IV)&\le \sum_{|[\omega]|>N^*}\int_{\Delta^c}\left|\hat h_n'\circ g_\omega \cdot a_\omega g_\omega' \right|dx+ \sum_{|[\omega]|>N^*}\int_{\Delta^c}\left|\hat h_n\circ g_\omega\cdot b_\omega\right|dx.
\end{split}
\end{equation*}
Using \eqref{a4.0} and \eqref{a4'} in the first integral, and using \eqref{a5} in the second integral, we choose $N^*$ large enough and get
\begin{equation}\label{eq:estIV}
(IV)\le \frac{1}{2^{1-\gamma}(1-\gamma)}\left[\sup_{\omega}|a_\omega|\cdot \|\hat h'_n\|_{C^0}\cdot \sum_{|[\omega]|>N^*}\|g_\omega'\|_{\mathcal B}+\|\hat h_n\|_{C^0}\cdot \sum_{|[\omega]|>\in N^*}\| b_\omega\|_{\mathcal B}\right].
\end{equation}
Choosing $l^*$ in \ref{eq:inducedresponseapprox} to make $\sum_{n=l^*+1}\norm{\hat L^nW}_{C^1}$ small enough, followed by $k$ and $\eta$ to make \eqref{eq:estI} and \eqref{eq:estIII} small enough, then choosing $N^*$ in \eqref{eq:estII} and \eqref{eq:estIV} so $(I)+(II)+(III)+(IV)\le \tau$ completing the proof.
\end{proof}

\section{Application to an example}\label{sec:example}
In this section we will apply our algorithm to a classical example of maps with
an indifferent fixed point, strictly related to Pomeau-Manneville maps, the Liverani-Saussol-Vaienti
map.
The behaviour of this map is determined by the exponent $\alpha$; if $\alpha\in (0, 1)$
it is a non-uniformly expanding map with an absolutely continuous invariant probability measure;
if $\alpha\geq 1$ there is an absolutely continuous invariant infinite measure.

\subsection{Definition of the map and the induced map}\label{s:IMDef}

The equation of the map is 
\begin{equation}\label{eq:PMmap}
   T(x)=\begin{cases}
   x(1+2^{\alpha}x^{\alpha})\mbox{ if $x\in[0,\frac{1}{2}]$}\\
   2x-1 \mbox{ if $x\in(\frac{1}{2},1]$}
   \end{cases}.
\end{equation}

\begin{numassumption}
We fix $\alpha=\datax{alph}$ in our example. This is the value 
corresponding to $\epsilon=0$ in the previous section.
\end{numassumption}

\begin{figure}
   \begin{subfigure}[b]{0.45\textwidth}
     \includegraphics[width=55mm,height=50mm]{./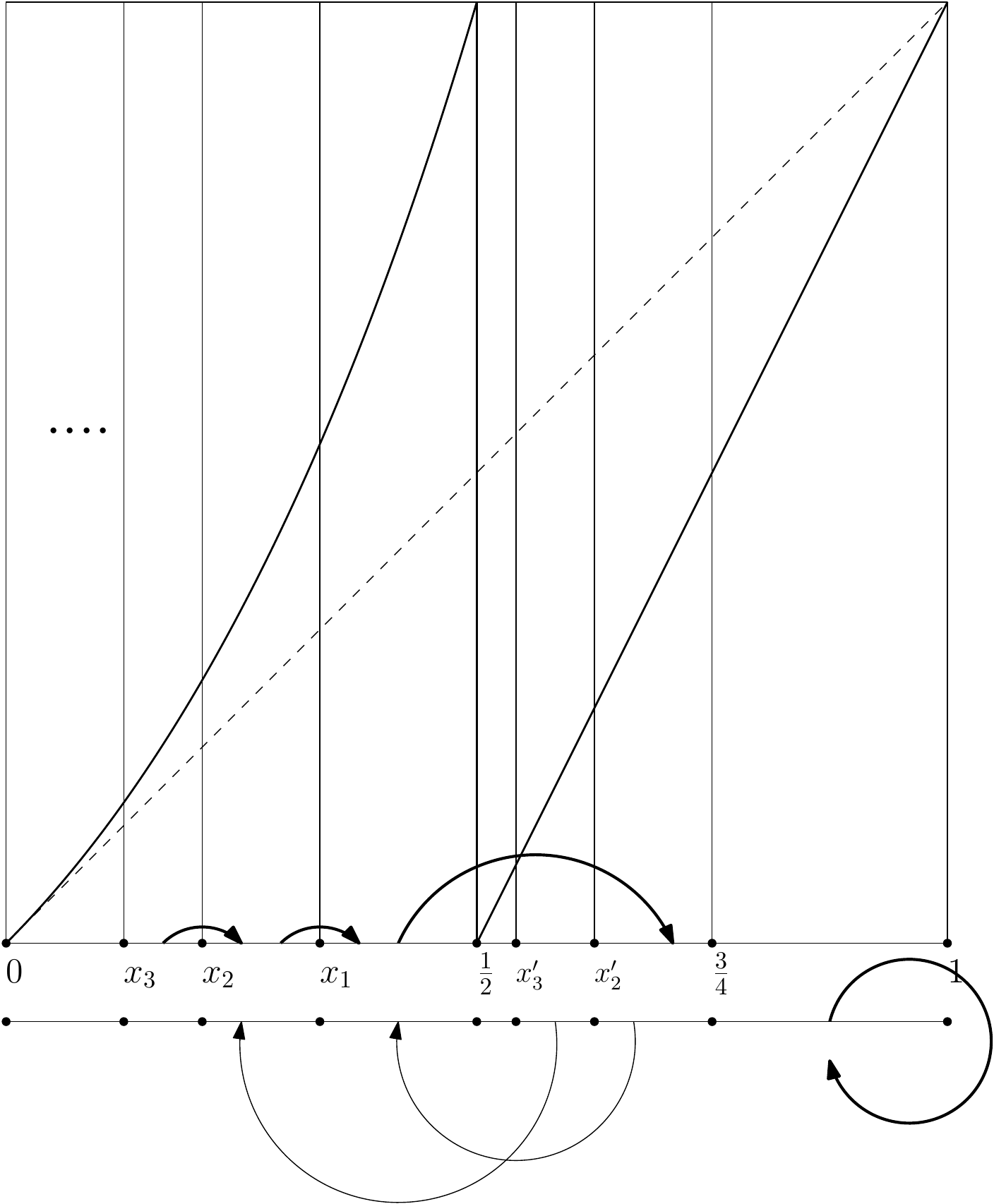}
    \caption{The map $T$}
   \end{subfigure}
   \begin{subfigure}[b]{0.45\textwidth}
    \includegraphics[width=55mm,height=50mm]{./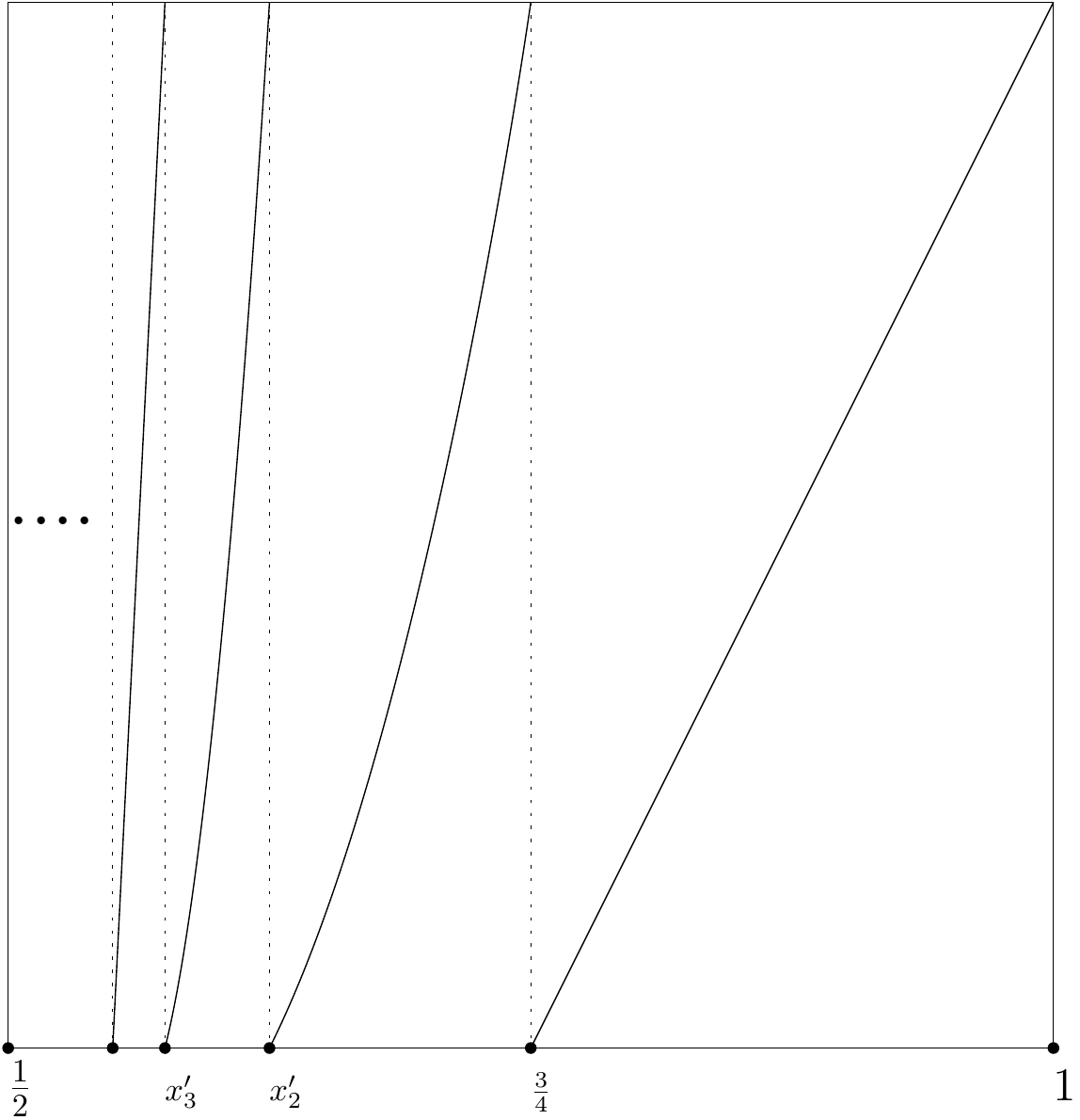} 
    \caption{The induced map $\hat T$}
   \end{subfigure}  
   \caption{The inducing scheme for $T$ in \eqref{eq:PMmap}.}
   \label{fig:PMinducing}
 \end{figure}
 
We construct the inducing scheme as in Subsection \ref{ss:T}.
Let  $x'_0=1$, $x'_1=\frac{3}{4}$,  and
$$
x'_n=g_{\omega}(\frac12) \,\, \text{for} \,\, n\ge 2.
$$  
Letting $\omega = 10^n$ and $g_{\omega}=g_{1}\circ g_{0}^{n}$,
then cylinder set $[\omega]$ is given by
$g_{\omega}([0.5, 1]) = (x'_n, x'_{n-1}]$.

Then $\hat T:\Delta\to\Delta$ is a piecewise smooth and onto map with countable number of branches and it satisfies all the assumptions of subsection \ref{ss:T}. See Figure \ref{fig:PMinducing} for a pictorial representation of the above inducing scheme.

\subsubsection{Numerical remark: the Shooting Method}\label{ShootingMethod}
To approximate rigorously the operators in this paper we need a rigorous way to approximate
long orbits given a coding, i.e.,
we need to be able to compute
\[
x = g_{\omega} (y) = g_1\circ g_0^{n-1}(y)
\]
i.e. we need to be able to compute $x\in [0.5, 1]$ such that
\[
T^{n+1}(x) = y, \quad T^i(x)\in [0, 0.5]
\]
for $i\in\{1,\dots,n\}$.

To solve this problem efficiently and obtain tight bounds on $x$ is tricky taking preimages sequentially leads to propagation of errors and the computed interval ends up being not usable.

The main idea is to substitute the equation above with the following system of equations
(this tecnique is called the Shooting Method, and we were introduced to it by W. Tucker)
\[
\left\{
\begin{array}{cc}
T(x_1) - x_2 = 0 & x_1\in [0.5, 1] \\
T(x_2) - x_3 = 0 & x_2\in [0, 0.5] \\
T(x_3) -x_4 = 0 & x_3\in [0, 0.5] \\
\vdots & \\
T(x_n) -y = 0  & x_n\in [0, 0.5].
\end{array}
\right.
\]

We will use the rigorous Newton method \cite{Tucker}, to simultaneously enclose $x_1, \ldots, x_n$. This way
we are solving a unique system of equations instead of propagating backwards the error through solving equations with a ``fat'' variable.
Given a function $\phi:\mathbb{R}^n\to \mathbb{R}^n$ and a vector of intervals $\hat{x} = (\hat{x}_1, \ldots, \hat{x}_n)$
the rigorous Newton step is given by
\[
N(\hat{x}) = \hat{x} \cap \textrm{mid}(\hat{x})-D\phi(\hat{x})^{-1}\phi(\textrm{mid}(\hat{x})),
\]
where the intersection between interval vectors  is meant componentwise and $\textrm{mid}$ is a function
that sends a vector of intervals to the vector of their midpoints \cite{Tucker}.

In our specific case, the shooting method is numerically well behaved: denoting by
$\phi(x_1, \ldots, x_n) = (T(x_1)-x_2, \ldots, T(x_n)-y)^T$, the Jacobian $D\phi$ is given by
a bidiagonal matrix, whose $i-th$ diagonal entry is $T'(x_i)$ and the superdiagonal entries are constant
and equal to $-1$.
In particular, this guarantees us that the Jacobian is invertible, since its eigenvalues correspond
to the diagonal elements and these are bounded away from $0$.
Moreover a bidiagonal system is solved in time $O(n)$ by backsubstitution, with small numerical error,
and these assumptions guarantee that the interval Newton method converges.

This allows us to compute tight enclosure of $g_{\omega} (y)$, $g'_{\omega} (y)$, which allows us to compute
discretizations of the transfer operator.

\subsection{Computing the error when taking a finite number of branches}\label{ss:ApproxPM}
Since we cannot calculate values for maps with infinitely many branches on the computer we use an approximating map as described in subsection \ref{ss:ApproxMethod},
\begin{figure}
   \includegraphics[width=55mm,height=50mm]{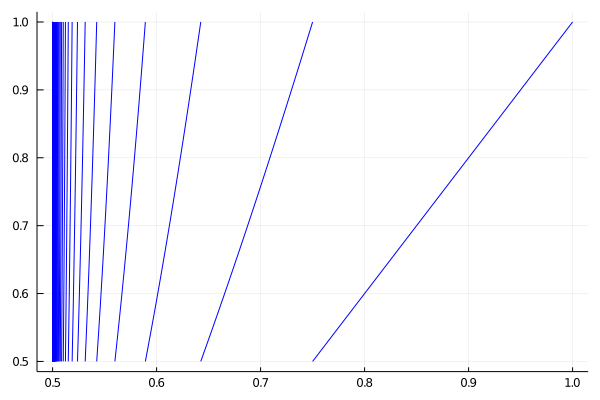} 
   \caption{The approximation map with $k=\datax{branches}$}
   \label{fig:ApproxMap}
\end{figure}
 this is depicted in figure \ref{fig:ApproxMap} for $\alpha=\datax{alph}$. To calculate bounds on the $C^1$ distance between the systems these maps define we use lemma \ref{lem:tripleC2} and find $D$ and $D_0$ for LSV maps. 
 Estimating these bounds efficiently is delicate since it involves estimating 
 the sum (and the tail) of converging series whose general term is going to zero slowly.
 The estimates in literature \cite{BS, K} give rise to values that are impractical for our computations; as an example, the value of the constant $C_8$ in \cite{K} computed according
 to their proof is of the order of $10^{269}$, which makes its use in our computations unfeasible, therefore some work is needed to give sharper bounds for the constants.
 Since these estimates are quite technical and need the introduction of specific notations, we separate them in the appendix not to hinder the flow of the sections.

In section \ref{a:DD0numbers} we bound $D_0\leq\datax{D0}$ and $D\leq\datax{D}$, so 
\begin{equation*}
\norm{(\hat L-\hat L_{\delta_k})}_{C^1}\leq (D+D_0D+2)\delta_k\leq \text{\datax{Lemma31Const}}\delta_k
\end{equation*}
can be made as small as needed by increasing $k$. 

Choosing $k=\datax{branches}$ gives 
$\norm{\hat L-\hat L_{\delta_k}}_{C^1}\leq$ \datax{L31}.

\subsubsection{Bounding $\norm{\hat h_{\delta_k}-\hat h}_{C^1}$}
In section \ref{a:Cstar} we prove the following Lasota-Yorke inequality
\[\norm{\hat L^n_{\delta_k}f}_{C^1}\leq \text{$\datax{AppendixM}$}\cdot (0.5)^n\norm{f}_{C^1}+\text{$\datax{AppendixM2}$}\norm{f}_\infty\].

The Lasota-Yorke inequality implies that 
\[
\norm{\hat L^n_{\delta_k}f}_{C^1}\leq \textrm{$\datax{AppendixCstar}$},
\]
which together with Lemma \ref{rem:approx1}, and the fact proved in section \ref{a:C1Contraction} by using the methods from \cite{GNS}  that 
\[
   \norm{\hat L_{\delta_k}^{\text{$\datax{C1N}$}}|_{U^0}}_{C^1}\leq \text{$\datax{LkC1contraction}$}
\]
allowing us to bound
\begin{equation}\label{eq:deltaherr}
\norm{\hat h-\hat h_{\delta_k}}_{C^1}\leq \frac{\text{\datax{C1N}}\cdot\text{\datax{AppendixCstar}}}{1-\text{\datax{LkC1contraction}}}\norm{(\hat L_{\delta_k}-\hat L)\hat h}_{C^1}\leq \text{\datax{LkLC1}}\norm{\hat h}_{C^1}
\end{equation}
Observing that 
$\norm{\hat h}_{C^1}\leq\norm{\hat h_{\delta_k}}_{C^1}+\norm{\hat h-\hat h_{\delta_k}}_{C^1}$ 
and a bound on $\norm{\hat h_{\delta_k}}_{C^1}$ in section \ref{a:Cstar} 
gives us a final error of \datax{h_error_1}.

\subsubsection{Computing the discretization error}\label{ss:DiscOp}
The truncated operator $\hat L_{\delta_k}$ satisfies the following 
Lasota-Yorke like inequalities\footnote{it is straightforward to see that these inequalities
imply Lasota-Yorke inequalities on $W^{k,1}$ with weak norm $W^{k-1,1}$.}
 \begin{align*}
||(\hat L_{\delta_k}^n)f'||_1\leq& \lambda^n ||f'||_1+1.785 ||f||_1\\
||(\hat L_{\delta_k}^n)f''||_1\leq &\lambda^{n}||f''||_1+0.3076||f'||_1+6.57||f||_1\\
||(\hat L_{\delta_k}f)'''||_1 \leq &\lambda^{3n}||f'''||_1+0.145||f''||_1+1.98||f'||_1+36.96||f||_1 \\
||(\hat L_{\delta_k}^nf)^{(4)}||_1 \leq& \lambda^{4n}||f^{(4)}||_1+0.057||f^{(3)}||_1+1.49||f''||_1+16.97||f'||_1+559.4||f||_1\\
||(\hat L_{\delta_k}^nf)^{(5)}||_1\leq& \lambda^{5n}||f^{(5)}||_1+0.0199||f^{(4)}||_1+0.85||f^{(3)}||_1\\
+&17.57||f''||_1 +794.59||f'||_1+10086||f||_1 \\
||(\hat L_{\delta_k}^nf)^{(6)}||_1\leq &\lambda^{6n}||f^{(6)}||_1+0.0066||f^{(5)}||_1+0.41||f^{(4)}||_1\\
+&13.33||f^{(3)}||_1+895||f''||_1 +24840.2||f'||_1+684431||f||_1.
\end{align*}

Since we know $\norm{h_{\delta_k}}_1=1$ for $f$ a probability density, we can use these to get a bound on $\norm{h_{\delta_k}^{(6)}}_1$ which is calculated to be \datax{V}. 
Denoting by $\hat L_n = \pi_n \hat L_{\delta_k} \pi_n$ the discretized operator on the base 
of Chebyshev polynomials of the first kind of degree up to $n$, the
same Lasota-Yorke inequalities allow us to compute 
\[
\norm{\hat L_{\delta_k}-\hat L_n}_{C^1}\leq \text{\datax{LYerror}}.
\]

This, together with the computed bounds on the $C^1$ mixing rate in table \ref{tab:C1Contraction}
\begin{table}[h!]
  \begin{center}
    \caption{Calculated contraction rates of our discretised operators.}
    \label{tab:C1Contraction}
    \begin{tabular}{l|c} 
      \textbf{k} & $\norm{\hat L_n^k|_{U^0}}_{C^1}$ \\
      \hline
      1 & \datax{cc1}\\
      2 & \datax{cc2}\\
      3 & \datax{cc3}\\
      4 & \datax{cc4}\\
      5 & \datax{cc5}\\
      6 & \datax{cc6}\\
      7 & \datax{cc7}\\
      8 & \datax{cc8}\\
      9 & \datax{cc9}\\
      10 & \datax{cc10}\\
      11 & \datax{cc11}\\

    \end{tabular}
  \end{center}
\end{table}
%
gives us an error of
\begin{equation*}
\norm{\hat h-\hat h_n}_{C^1}\leq\text{\datax{heta_error}};
\end{equation*}
in figure \ref{fig:ApproxDensity} a plot of the approximated density is presented.
\begin{figure}
   \includegraphics[width=55mm]{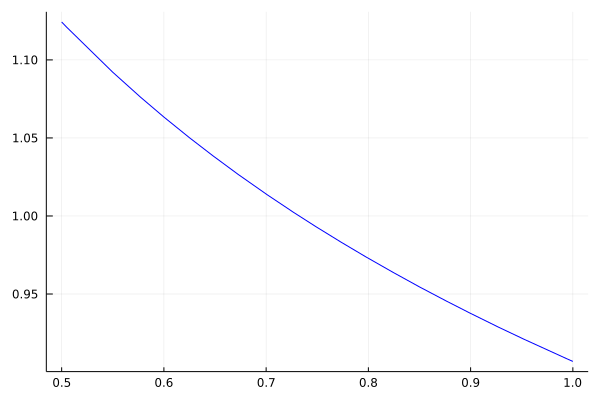}
   \caption{The invariant density of the induced map as calculated according to section \ref{ss:ApproxMethod}.}
   \label{fig:ApproxDensity}
\end{figure}
\subsubsection{Numerical remark: automated Lasota-Yorke inequalities}\label{ss:autoly}
We detail a way to automatically calculate Lasota-Yorke type inequalities for transfer operators in $W^{k,1}$.
Following \cite{BKL} let
\[
 L_k f = \sum_{y\in T^{-1}(x)}\frac{f(y)}{|T'(y)|^k}.
\]
From this follows
\begin{equation}\label{eq:der}
(L_k f)' =  L _{k+1} f' + k  L _{k}(f D), 
\end{equation}
where $D = (1/T'(x))'$ is the distorsion.
We use the formula above to compute symbolical expressions for the derivatives
$(L_1 f)^{(l)}$. 

We use Interval Arithmetic and higher order Automatic Differentiation \cite{Tucker}(as implemented in TaylorSeries.jl) to compute bounds for 
\[
\norm{(1/T')^{(l)}}_{\infty}.   
\]
This allows us to bound the coefficients of the Lasota-Yorke inequalities.

\subsection{Approximating the linear response for the induced map}\label{ss:IPMResponse}
\begin{figure}
   \includegraphics[width=55mm]{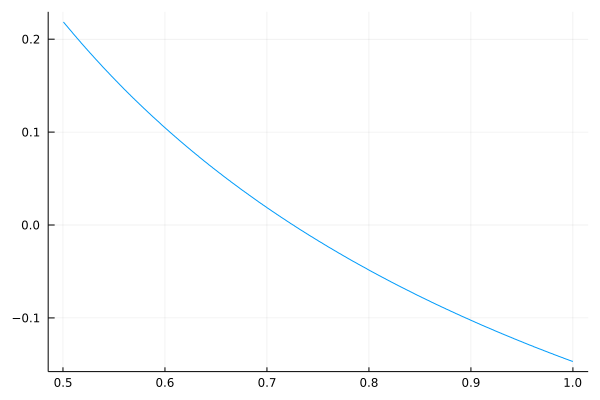} 
   \caption{The linear response of the induced map as calculated according to Section \ref{sec:InducedResponse}.}
   \label{fig:ApproxResponse}
\end{figure}
We approximate the linear response of our induced map using \eqref{eq:inducedresponseapprox} which uses the 
discretized operator $\hat P_h$, associated to the hat discretization of size $\eta=$\datax{eta}. 
We get our error from lemma \ref{thm:induced_resp}, which gives us four terms that need to be bound, each of which is done in the appendix, \ref{a:IR1}, \ref{a:IR2}, \ref{a:estimateshstar3} and \ref{a:estimateshstar4} for $l^*=$\datax{lstar} and $k=$\datax{branches}:\marginnotet{Some changes here}
\begin{enumerate}
\item $\sum_{n=0}^{l^*}\sum_{i=0}^n\norm{\hat L^i|_{U^0}}_{C^0}\norm{(\hat L_w-\hat L_{\delta_k})\hat L_w^{n-i}W_h}_{C^0} \leq $\datax{hstar_error_1}
\item $\sum_{n=1}^{l^*}\sum_{i=0}^{n-1}\norm{\hat L^i|_{U^0}}_{C^0}\norm{(\hat L_{\delta_k}-\hat L)\hat L_w^{n-i}W_h}_{C^0}\leq $\datax{hstar_error_2}
\item $\sum_{n=0}^{l^*}\norm{\hat L^n|_{U^0}}_{C^0}\norm{W_h-W}_{C^0}\leq$\datax{hstar_error_3}
\item $\sum_{n=l^*+1}^{\infty}\norm{\hat L^n W}_{C^0}\leq$\datax{hstar_error_4}
\end{enumerate}
this gives us a total error $\norm{\hat h^*- \hat h_\eta^*}_{C^0}\leq$ \datax{hetastar_error}

\subsubsection{The contraction rates of $\hat L_{\delta_k}$ in the $C^0$ norm.}\label{a:C1Contraction}
In order to bound $\norm{\hat L^n_{\delta_k}|_{U^0}}_{C^0}$ we use lemma 7.13 from \cite{BGNN} to bound
\begin{equation*}
\norm{(\hat L_{\delta_k}^m-\hat L_n^m)f}_{C^0}\leq A_m\norm{f}_{C^1}+B_m\norm{f}_{C^0} 
\end{equation*}
from which we can use $C_{c,m}$ to bound $\norm{\hat L_{\delta_k}^m|_{U^0}}_{C^0}\leq A\norm{f}_{C^1}+(B+C_{c,m})\norm{f}_{C^0} $
and the Lasota-Yorke inequality \eqref{eq:LYC2} from section \ref{a:Cstar}, and use the small matrix method from \cite{GN}. We have
\[
\begin{pmatrix}
\norm{\hat L^m_{\delta_k} f}_{C^1}\\
\norm{\hat L^m_{\delta_k} f}_{C^0}
\end{pmatrix}\leq
\begin{pmatrix}
M\lambda^{m} & D\\
A_n & B_n+C_{c,m}
\end{pmatrix}
\begin{pmatrix}
\norm{f}_{C^1}\\
\norm{f}_{C^0}
\end{pmatrix}.
\]
Choosing the value of $n$ that minimises equation \eqref{eq:deltaherr}, we take $C_{c,4}\leq$\datax{cc4} from section \ref{ss:DiscOp}, together with the calculation $\norm{(\hat L_{\delta_k}^{4}-\hat L_n^{4})f}_{C^0}\leq $\datax{LB131}$\norm{f}_{C^1}+$\datax{LB132}$\norm{f}_{C^0}$, which gives the largest eigenvalue of the small matrix $\rho=$\datax{LkC1contraction}.

\subsubsection{The contraction rates of $\hat L$ in the $C^0$ norm.}\label{a:InducedC1Contraction}
We can use $\norm{\hat L_{\delta_k}^{n}|_{U^0}}_{C^0}\leq \rho_{n,C^0}$ and parts of lemma \ref{lem:tripleC2} to get 
\begin{equation*}
\norm{\hat L^n|_{U^0}}_{C^0}\leq \norm{\hat L_{\delta_k}^{n}|_{U^0}}_{C^0}+\norm{\hat L^n_{\delta_k}-\hat L^n}_{C^0}\leq nC^*(2+D)\delta_k+\rho_{n,C^0}.
\end{equation*}
These give us $\norm{\hat L^{n}|_{U^0}}_{C^0}\leq$\datax{lstarcontract} for $n=$\datax{lstarn}.

\subsubsection{Bounding Item (1)}\label{a:IR1}
We can bound  $\sum_{n=0}^{l^*}\sum_{i=0}^n\norm{\hat L^i|_{U^0}}_{C^0}\norm{(\hat P_h-\hat L_{\delta_k})\hat P_h^{n-i}W_h}_{C^0}$ by
%
\begin{align*}
&\sum_{n=0}^{l^*}\sum_{i=0}^n\norm{\hat L^i (\hat P_h-\hat L_{\delta_k})\hat P_h^{n-i}W_h}_{C^0}\\
\leq&\eta K\sum_{n=0}^{l^*}\sum_{i=0}^n \norm{\hat L^i|_{U^0}}_{C^0}(\textrm{Lip}(\hat P_h^{n-i}W_h) + \norm{\hat P_h^{n-i}W_h}_{C^0})
\end{align*}
where $K=2(D+1+\lambda(D+1))$.

We can calculate $\textrm{Lip}{(\hat P_h^{n-i}W_h})$ and $\norm{\hat P_h^{n-i}W_h}_{C^0}$
explictly by using validated numerical methods, since 
$W_h$ explicitly represented on the computer, so we can compute an enclosure of $\hat P_h W_h$ by rigorous matrix multiplication; for a function $f$ in the hat basis with coefficients $v_i$ we have explicit functions that allow us to compute
the $\textrm{Lip}$ and $C^{0}$ norm, i.e.: 
\[
\quad \textrm{Lip}(f) = \max \frac{|v_{i+1}-v_i|}{\eta}, \quad ||f||_{C^0} = \max_i |v_i|.
\] 
%
%
 We calculate this to give
\begin{equation*}
\sum_{n=0}^{l^*}\sum_{i=0}^n\norm{\hat L^i (\hat P_h-\hat L_{\delta_k})\hat P_h^{n-i}W_h}_{C^0}\leq\text{\datax{hstar_error_1}}
\end{equation*}
\subsubsection{Bounding Item (2)}\label{a:IR2}
To bound $\sum_{n=1}^{l^*}\sum_{i=0}^{n-1}\norm{\hat L^i(\hat L_{\delta_k}-\hat L)\hat P_h^{n-i}W_h}_{C^0}$, observe
\begin{align*}
&\sum_{n=0}^{l^*}\sum_{i=0}^n\norm{\hat L^i(\hat L_{\delta_k}-\hat L)\hat P_h^{n-i}W_h}_{C^0}\\
\leq&\sum_{n=0}^{l^*}\sum_{i=0}^n\norm{\hat L^i}_{C^0}\norm{(\hat L_{\delta_k}-\hat L)\hat P_h^{n-i}W_h}_{C^0}
\end{align*}
and
\begin{align*}
&\norm{(\hat L_{\delta_k}-\hat L)f}_{C^0}\\
\leq&\norm{f(\delta_k(2x-1)+\frac 1 2)2\delta_k-\sum_{|\omega|>N^*}f\circ g_\omega|g_\omega'|}_{C^0}\\
\leq&2\delta_k\norm{f(\delta_k(2x-1)+\frac 1 2)}_{C^0}+\sum_{|\omega|>N^*}\norm{f\circ g_\omega |g_\omega'|}_{C^0}\\
\leq&(2\delta_k+\sum_{|\omega|>N^*}|g_\omega'|)\norm{f}_{C^0}
\end{align*}
and therefore we have an upper bound of
\begin{equation*}
(2\delta_k+\sum_{|\omega|>N^*}|g_\omega'|)\sum_{n=0}^{l^*}\sum_{i=0}^n\norm{\hat L^i}_{C^0}\norm{\hat P_h^{n-i}W_h}_{C^0}
\end{equation*}
As in the estimate for item (1) we can compute $\norm{\hat P_h^{n-i}W_h}_{C^0}$ explicitly, which gives us 
\[
\sum_{n=1}^{l^*}\sum_{i=0}^{n-1}\norm{\hat L^i(\hat L_{\delta_k}-\hat L)\hat P_h^{n-i}W_h}_{C^0}\leq
\text{\datax{hstar_error_2}}.
\]
%
%
%
%
%
\subsubsection{Bounding Item (3)}\label{a:estimateshstar3}
We bound $\norm{W_h-W}_{C^0}$ by the following
\begin{equation*}
\norm{W_h-W}_{C^0}\leq\norm{W_h-W_{\infty}}_{C^0}+2\textrm{Lip}(W)\eta,
\end{equation*}
as explained in remark \ref{rem:WW_h}.

Remark that $\textrm{Lip}(W)=\norm{W'}_{\infty}$ that we computed in section 
\ref{a:WprimeBound} to be bounded by \datax{WprimeBound}.

Recall that 
\[
W_h = \sum_{i=0}^h a_i \phi_i(x)   
\]
and
\[
W_{\infty} = \sum_{i=0}^h \tilde{a}_i \phi_i(x)   
\]
where 
\[
a_i =\sum_{\omega\leq k} \frac{1}{\eta}\int_{g_\omega(I_i)}A_0(z)\cdot \hat h_n'(\zeta)+B_0(z)\hat h_n(\zeta)dz+\frac{1}{\eta}\int_{\delta_k(2I_i-1)+1/2}[A_0\hat h'_\eta+B_0\hat h_n](z)dz   
\]
and 
\[
\tilde{a}_i = \int_{I_i} \hat L (A_0 h'+B_0 h) = \sum_{\omega} \frac{1}{\eta}\int_{g_\omega(I_i)}A_0(x)\cdot \hat h'(x)+B_0(x)\hat h(x)dx
\]

Then we have that 
\[
||W_h-W_{\infty}||_{C^0}\leq \sup|a_i-\tilde{a}_i|.   
\]

We introduce auxiliary quantities 
\[
a_{i,*}= \sum_{\omega} \frac{1}{\eta}\int_{g_\omega(I_i)}A_0(x)\cdot \hat h'(\zeta)+B_0(x)\hat h(\zeta)dx
\]
where the $\zeta$ in $g_{\omega}(I_i)$ is chosen as in the computation of $a_i$ for all
$i$ and for all $\omega<k$, and arbitrarily for all $\omega\geq k$.

Then 
\begin{align*}
&|\tilde{a}_{i}-a_{i,*}|= \left|\sum_{\omega} \frac{1}{\eta}\int_{g_\omega(I_i)}A_0(x)\cdot \hat h'(x)+B_0(x)\hat h(x)dx-\int_{g_\omega(I_i)}A_0(x)\cdot \hat h_n'(\zeta)+B_0(x)\hat h_n(\zeta)dx\right|\\
=&\left|\sum_{\omega} \frac{1}{\eta}\int_{g_\omega(I_i)}A_0(x)\cdot (\hat h'(x)-\hat h'_n(\zeta))+B_0(x)(\hat h(x)-\hat h_n(\zeta))dx\right|\\
\leq&\sum_{\omega} (|g_{\omega}(I_i)|\norm{\hat h''}_{\infty}+\norm{\hat h'-\hat h_n'}_{\infty})\frac{1}{\eta}\left|\int_{g_\omega(I_i)}A_0(x)dx\right|\\
&+(|g_{\omega}(I_i)|\norm{\hat h'}_{\infty}+\norm{\hat h-\hat h_n}_{\infty})\frac{1}{\eta}\left|\int_{g_\omega(I_i)}B_0(x)dx\right|.
\end{align*}


Recalling that $A_0=\partial_{\epsilon}g_\omega(\hat T_{\epsilon})$ $B_0=\partial_{\epsilon} g_\omega'(\hat T_\epsilon)\cdot \hat T'_\epsilon$ 
\footnote{Note that while this appears to depend on $\omega$, we choose the value of $\omega$ coresponding to the appropriate branch of $\hat T_\epsilon$.}  
we get
\begin{align*}
&\sup_{i}\sum_{\omega} (|g_{\omega}(I_i)|\norm{\hat h''}_{\infty}+\norm{\hat h'-\hat h_n'}_{\infty})\frac{1}{\eta}\left|\int_{g_\omega(I_i)}A_0(x)dx\right|\\
&+(|g_{\omega}(I_i)|\norm{\hat h'}_{\infty}+\norm{\hat h-\hat h_n}_{\infty})\frac{1}{\eta}\left|\int_{g_\omega(I_i)}B_0(x)dx\right|\\
\leq&\sup_{i}\sum_{\omega} (|g_{\omega}(I_i)|\norm{\hat h''}_{\infty}+\norm{\hat h'-\hat h_n'}_{\infty})\frac{1}{\eta}\left|\int_{I_i}\partial_{\epsilon}g_\omega(x)\hat T_{\epsilon}'(x)\right|\\
&+(|g_{\omega}(I_i)|\norm{\hat h'}_{\infty}+\norm{\hat h-\hat h_n}_{\infty})\frac{1}{\eta}\left|\int_{I_i}\partial_{\epsilon} g_\omega'(x)dx\right|\\
\leq&(|g_\omega(I_i)|\norm{\hat h}_{C^2}+\norm{\hat h-\hat h_n}_{C^1})\sum_{\omega}|\partial_{\epsilon}g_\omega\hat T_{\epsilon}'|+|\partial_{\epsilon} g_\omega'|
\end{align*}
which we calculate using methods from sections \ref{a:WprimeBound} and \ref{sec:A0B0} for $\eta=$\datax{eta} to get
\begin{equation*}
   |\tilde{a}_i-a_{i, *}|\leq\text{\datax{W1}}.
\end{equation*}

Now, denote by $b$ the linear branch of the truncated induced LSV map; then 
\[
|a_i-a_{i,*}| = \left| \frac{1}{\eta}\int_{b^{-1}(I_i)}A_0(y)h'_n(\zeta) + B_0(y)\hat h_n(\zeta)dy-\frac{1}{\eta}\sum_{|\omega|>k}\int_{g_{\omega}(I_i)}A_0(y) h'_n(\zeta)+B_0(y)\hat h_n(\zeta)dy\right|
\]
for all $i$.

We will only compute a bound for the $B_0$ terms, the $A_0$ terms are similar.

\begin{align*}
&\left|\frac{1}{\eta}\int_{b^{-1}(I_i)}B_0(y)\hat h_n(\zeta)dy-\frac{1}{\eta}\sum_{|\omega|>k}\int_{g_{\omega}(I_i)}B_0(y)\hat h_n(\zeta)dy\right|\\
\leq&\left|\frac{1}{\eta}\int_{b^{-1}(I_i)}B_0(y)\hat h_n(\zeta)dy\right|+\left|\frac{1}{\eta}\sum_{|\omega|>k}\int_{g_{\omega}(I_i)}B_0(y)\hat h_n(\zeta)dy\right|\\
\leq&\norm{\hat h_n}_{C^0}\left(\left|\frac{1}{\eta}\int_{b^{-1}(I_i)}B_0(y)dy\right|+\sum_{|\omega|>k}|\partial_\epsilon g_\omega'|\right)\\
\leq&\norm{\hat h_n}_{C^0}\left(\left|\frac{\delta_k}{2\eta}\int_{I_i}B_0(b^{-1}(y))dy\right|+\sum_{|\omega|>k}|\partial_\epsilon g_\omega'|\right)\\
\leq&\norm{\hat h_n}_{C^0}\left(\frac{\delta_k}{2\eta}\norm{B_0}_1+\sum_{|\omega|>k}|\partial_\epsilon g_\omega'|\right)\\
\end{align*}
which including the $A_0$ term in a similar fashion we get

\begin{equation*}
\norm{\hat h_n}_{C^1}\left(\frac{\delta_k}{2\eta}\max \{\norm{A_0}_1,\norm{B_0}_1\}+\sum_{|\omega|>k}(|\partial_\epsilon g_\omega'|+|\partial_{\epsilon}g_\omega\cdot g_\omega'|)\right).
\end{equation*}
Taking $k=$\datax{branches} this allows us to prove that $\norm{W_h-W}_{C^0}\leq$\datax{hstar_error_3}.

\subsubsection{Bounding $\norm{W'}_{C^0}$}\label{a:WprimeBound}
 For calculating $\norm{W'}_{C^0}$, first we observe that for the $B_0$ term
 \begin{align*}
(\hat L[(B_0\hat h)\circ g_\omega\cdot g_\omega'])'&=\sum([ \partial_{\epsilon}g'_\omega(T_\epsilon)T_\epsilon'\hat h]\circ g_\omega\cdot g_\omega')'\\
&=\sum (\partial_{\epsilon}g'_\omega(\hat h\circ g_\omega))'\\
&=\sum \partial_\epsilon g_\omega''(\hat h\circ g_\omega)+\partial_\epsilon g_\omega' \hat h'\circ g_\omega g_\omega'\\
&\leq \norm{\hat h}_{C^1}\max\{\sum \partial_\epsilon g_\omega'',\sum \partial_\epsilon g_\omega' g_\omega'\}
\end{align*}
from \cite{K} lemma 5.8 and 5.3 we have that 
\begin{equation*}
\sum \partial_\epsilon g_\omega''\leq C\sum _{r=1}^\infty \frac{4(1-\log(0.5))}{r^{(\alpha+1)/\alpha}}\leq C4(1-\log(0.5))\zeta''(9)
\end{equation*}
so it remains to find this $C$.

From the proof of lemma 5.8 we have
\begin{align*}
\partial_\epsilon z_r''=&[2\alpha+1+\alpha(\alpha+1)\log(2z_{r+1})]2^\alpha z_{r+1}^{\alpha-1}(z_{r+1}')^2\\
+&(\alpha-1)\alpha(\alpha+1)2^\alpha z_{r+1}^{\alpha-2}(z_{r+1}')^2\partial_\epsilon z_{r+1}+2\alpha(\alpha+1)2^\alpha z_{r+1}^{\alpha-1}z_{r+1}'\partial_\epsilon z_{r+1}'\\
+&(1+(\alpha+1)\log (2z_{r+1}))2^\alpha z_{r+1}^\alpha z_{r+1}''+\alpha(\alpha+1)2^\alpha z_{r+1}^{\alpha-1}z_{r+1}''\partial_\epsilon z_{r+1}\\
+&(1+(\alpha+1)2^\alpha z_{r+1}^\alpha)\partial_{\epsilon}z_{r+1}''
\end{align*}
which we divide by $z_{r}'=[1+(\alpha+1)2^\alpha z_{r+1}^\alpha]z_{r+1}'$ to get
\begin{align*}
\left | \frac{\partial_\epsilon z_r''}{z_{r}'}-\frac{\partial_\epsilon z_{r+1}''}{z_{r+1}'}\right |\leq&\frac{2\alpha+1+\alpha(\alpha+1)\log(2))}{\alpha+1}\left |z_{r+1}^{\alpha-1}z_{r+1}'\right |+\alpha\left |\log(z_{r+1})z_{r+1}^{\alpha-1}z_{r+1}'\right |\\
+&(\alpha-1)\alpha(\alpha+1)2^\alpha \left|z_{r+1}^{\alpha-2}z_{r+1}'\partial_\epsilon z_{r+1}\right|+2\alpha(\alpha+1)2^\alpha |z_{r+1}^{\alpha-1}\partial_\epsilon z_{r+1}'|\\
+&\frac{(2+(\alpha+1)\log(2)))2^\alpha}{1+(\alpha+1)2^\alpha }\left|\frac{ z_{r+1}''}{z_{r+1}'}\right|+\left|\frac {\log (z_{r+1})z_{r+1}''}{z_{r+1}'}\right|\\
+&\alpha \left|\frac{ z_{r+1}^{\alpha-1}z_{r+1}''\partial_\epsilon z_{r+1}}{z_{r+1}'}\right|
\end{align*}
so we must find bounds on $\left |z_{r+1}^{\alpha-1}z_{r+1}'\right |$, $\left |\log(z_{r+1})z_{r+1}^{\alpha-1}z_{r+1}'\right |$, $ \left|z_{r+1}^{\alpha-2}z_{r+1}'\partial_\epsilon z_{r+1}\right|$, $\left|\frac{ z_{r+1}''}{z_{r+1}'}\right|$, $\left|\frac {\log (z_{r+1})z_{r+1}''}{z_{r+1}'}\right|$ and $ \left|\frac{ z_{r+1}^{\alpha-1}z_{r+1}''\partial_\epsilon z_{r+1}}{z_{r+1}'}\right|$. 
We do this by \cite{K} lemmas 5.2, 5.3, 5.4, 5.6 and 5.7.

Lemma 5.2 gives
\begin{equation*}
\frac{C_1^{1/\alpha}z_0}{(r+1)^{1/\alpha}}\leq z_{r+1}\leq \frac{C_2^{1/\alpha}}{(r+1)^{1/\alpha}}.
\end{equation*}

Lemma 5.3 gives
\begin{equation*}
z_{r+1}'\leq C_8(1+(r+1)z_0^\alpha \alpha 2^\alpha)^{-(\alpha+1)/\alpha}\leq C_8 (\alpha 2^\alpha)^{-(\alpha+1)/\alpha}(r+1)^{-(\alpha+1)/\alpha}z_0^{-(\alpha+1)}.
\end{equation*}

Lemma 5.4 gives
\begin{equation*}
\left | \frac{z_{r+1}''}{z_{r+1}'} \right | \leq \alpha(\alpha+1)2^\alpha C_2^{(\alpha-1)/\alpha}C_8(\alpha 2^\alpha)^{-(\alpha+1)/\alpha} \frac{ z_0^{-2}}{\max \{r+1,1\}}
\end{equation*}

Lemma 5.6 gives
\begin{equation*}
|\partial_\epsilon z_r| \leq \frac{2^\alpha C_2^{1/\alpha}}{\alpha(\alpha+1)2^{\alpha -1}}\frac{logg(rz_0^\alpha)}{r^{1/\alpha}}(logg(r)-\log(z_0)).
\end{equation*}

Lemma 5.7 gives
\begin{equation*}
\left | \frac{\partial_\epsilon z_{r+1}'}{z_{r+1}'} \right |((\alpha+2)2^\alpha+2\frac{\alpha+1}{1-\alpha}C_2^{1/\alpha})[logg(rz_0^\alpha)]^2(logg(r)-\log(z_0))
\end{equation*}

Combining these gives
\begin{align*}
\left |z_{r+1}^{\alpha-1}z_{r+1}'\right |\leq&C_2^{(\alpha-1)/\alpha}C_8 (\alpha 2^\alpha)^{-(\alpha+1)/\alpha}(r+1)^{-2}z_0^{-(\alpha+1)}\\
\left |\log(z_{r+1})z_{r+1}^{\alpha-1}z_{r+1}'\right |\leq&C_3C_2^{(\alpha-1)/\alpha}C_8 (\alpha 2^\alpha)^{-(\alpha+1)/\alpha}(r+1)^{-2}z_0^{-(\alpha+1)}(logg(r+1)-\log(z_0))\\
\left |z_{r+1}^{\alpha-2}z_{r+1}'\partial_\epsilon z_{r+1} \right |\leq&\frac{C_2^{(\alpha-1)/\alpha}C_8(\alpha2^\alpha)^{(\alpha+1)/\alpha}}{\alpha(\alpha+1)2^{\alpha-1}}(r+1)^{-2}z_0^{-\alpha-1}logg(rz_0^\alpha)(logg(r)-\log (z_0))\\
\left | \frac{z_{r+1}''}{z_{r+1}'} \right | \leq& \alpha(\alpha+1)2^\alpha C_2^{(\alpha-1)/\alpha}C_8(\alpha 2^\alpha)^{-(\alpha+1)/\alpha} \frac{ z_0^{-2}}{\max \{r+1,1\}}\\
\left|\frac {\log (z_{r+1})z_{r+1}''}{z_{r+1}'}\right|\leq&\alpha(\alpha+1)2^\alpha C_2^{(\alpha-1)/\alpha}C_8(\alpha 2^\alpha)^{-(\alpha+1)/\alpha}C_3 \frac{ z_0^{-2}}{\max \{r+1,1\}}(\log(r+1)-\log(z_0))\\
 \left|\frac{ z_{r+1}^{\alpha-1}z_{r+1}''\partial_\epsilon z_{r+1}}{z_{r+1}'}\right|\leq&C_8^2(\alpha 2^\alpha)^{-2(\alpha-1)/\alpha}C_2^{2-1/\alpha}2^{\alpha-1}z_0^{\alpha-3}r^{-1/\alpha-3}.
\end{align*}
We use these to bound the tail and calculate rigorously a finite number of terms of
$\sum \partial_\epsilon g_\omega''$ allowing us to prove that 
$\sum \partial_\epsilon g_\omega''\leq$\datax{gppaSum} and $|I_i|\norm{W'}_\infty\leq$\datax{WprimeBound}.

\subsubsection{Bounding Item (4)}\label{a:estimateshstar4}
Using bounds on $\norm{\hat L|_{U^0}}_{C^0}\leq C_n$ from section \ref{a:InducedC1Contraction} we get
\begin{align*}
\sum_{n=l^*}^\infty \norm{\hat L^n W}_{C^0}\leq l^*\frac{C_{l^*}\norm{W}_{C^0}}{1-C_{l^*}}.
\end{align*}
To bound $\norm{W}_{C^0}$ we use
\begin{align*}
\norm{\hat L[A_0\hat h'+B_0\hat h]}_{C^0}\leq \norm{\hat h}_{C^1}\sum_{\omega}(|\partial_\epsilon g_\omega'|+|\partial_{\epsilon}g_\omega\cdot g_\omega'|).
\end{align*}
We have from subsection \ref{a:InducedC1Contraction} that $\norm{\hat L^{n}|_{U^0}}_{C^0}\leq$\datax{lstarcontract}, for $n=$\datax{lstarn} so we choose $l^*$ to be a multiple of \datax{lstarn} which gives for $l^*=$\datax{lstar}
\[
\sum_{n=l^*}^{\infty}\norm{\hat L^n W}_{C^0}\leq \text{\datax{hstar_error_4}}.
\]
%
%
\subsubsection{Calculating $\norm{A_0}_1$ and $\norm{B_0}_1$}\label{a:A0int}
For calculating $\norm{\hat h^*-\hat h_\eta^*}_1$ we need bounds on $\norm{A_0}_1$ and $\norm{B_0}_1$, as used in subsections \ref{a:estimateshstar3} and \ref{a:estimateshstar4}. We have a method to calculate the values of $A_0$ and $B_0$ from section \ref{sec:A0B0}, since $B_0=A_0'$ we can calculate the integral of $\int_{[a,b]} B_0(x) dx = A_0(b)-A_0(a)$. In order to calculate the integral of $A_0$ we approximate it by taking $k = \datax{A0Accuracy}$ evenly spaced values in each partition element $I_i$, we then take $\frac{|I_j|}{k}\sum_{j=1}^kA_0(x_j)$ as the value of the integral on $I_i$. This has an $L^1$ error of $\frac{|I_i|}{k}\textrm{Var} (A_0)$. Taking our approximation of the integral of $A_0$ and adding $\frac{|I_i|}{k}\textrm{Var}(A_0)$ gives an upper bound of $\norm{A_0}_1$.
\subsection{Pulling back to the original map}
To get the invariant density and linear response for the full map we must pull them back to the unit interval with $F$ and $Q$ from subsection \ref{ss:T}. The invariant density is fairly straight forward to calculate and find the error. We want a bound on $\norm{h-F_0^{app}\hat h_n}_1$ for which we can use a bound from (I) in the proof of theorem \ref{thm:main}.
\begin{equation*}
\norm{F_0\hat h-F_0^{app}\hat h_n}_1\leq 2\norm{\hat h_n-\hat h}_1+\frac{1}{2^{1-\gamma}(1-\gamma)}\norm{\hat h_n}_{C^0}\sum_{\omega>N^*}\norm{g'_\omega}_B
\end{equation*}
We use the bounds for $\gamma=$\datax{gamma} $\frac{1}{2^{1-\gamma}(1-\gamma)}\leq \datax{gamma_const}$, $\sum_{\omega>N^*}\norm{g'_\omega}_B\leq$\datax{gprime_tail}  as calculated in section \ref{a:gomegaprime}  and $\norm{\hat h_n}_{C^0} \leq $\datax{heta_max}, which gives the second term to be bounded by \datax{hpbII}. The first term we can bound by $2\norm{\hat h_n-\hat h}_{C^1}\leq$\datax{hpbI} as calculated in section \ref{ss:DiscOp}, giving us $\norm{h-F_0^{app}\hat h_n}_{1}\leq $\datax{hpb}.
%
%

As seen in theorem \ref{thm:main} pulling back the linear response requires the following bounds
\begin{align*}
\norm{h^*-h_\eta^*}_1\leq&\norm{F_0\hat h^*-F_0\hat h_\eta^*}_1+\norm{F_0\hat h_\eta^*-F_0^{app}\hat h_\eta^*}_1\\
&+\norm{Q\hat h-Q\hat h_n}_1+\norm{Q\hat h_n-Q^{app}\hat h_n}_1.
\end{align*}
We bound these in section \ref{a:PB} giving
\begin{enumerate}
\item $\norm{F_0\hat h^*-F_0\hat h_\eta^*}_1\leq$\datax{I}
\item $\norm{F_0\hat h_\eta^*-F_0^{app}\hat h_\eta^*}_1\leq$\datax{II}
\item $\norm{Q\hat h-Q\hat h_n}_1\leq$\datax{III}
\item $\norm{Q\hat h_n-Q^{app}\hat h_n}_1\leq$\datax{IV}
\end{enumerate}
This gives us $\norm{h^*-h^*_\eta}_1\leq$\datax{hetastar_pb_error}.
\subsubsection{Bounding Items (1) and (2)}\label{a:PB}
It is given in theorem \ref{thm:main} that 
\begin{equation*}\norm{F_0\hat h^*-F_0\hat h^*_\eta}_1\leq 2\norm{\hat h^*-\hat h^*_\eta}_1,\end{equation*} 
which we have from subsection \ref{ss:IPMResponse} is bounded by $2\cdot\text{\datax{hetastar_error}}=\text{\datax{I}}$.\marginnotet{some changes here}

We also have from theorem \ref{thm:main} that $\norm{F_0\hat h_\eta^*-F_0^{app}\hat h_\eta^*}_1$ is bounded by 
\begin{equation*}
\norm{\hat h^*_\eta}_{C^0}\frac{1}{2^{1-\gamma}(1-\gamma)}\sum_{\omega>N^*}\norm{g'_\omega}_{B}.\end{equation*}

We can compute explicitly $\norm{\hat h^*_\eta}_{C^0}$ and it is bounded by \datax{hetastar_max}, $\frac{1}{2^{1-\gamma}(1-\gamma)}\leq$\datax{gamma_const} and for $N^*=\datax{Nstar}$, $\sum_{\omega>N^*}\norm{g'_\omega}_B\leq$\datax{gprime_tail} as is shown in section \ref{a:gomegaprime}. 
These give us the bound $\norm{F_0\hat h_\eta^*-F_0^{app}\hat h_\eta^*}_1\leq$ \datax{II}.

\subsubsection{Bounding Items (3) and (4)}

We can bound $\norm{Q\hat h-Q\hat h_n}_1$ by
\begin{equation*}
\frac{1}{2^{1-\gamma}(1-\gamma)}\sum_{\omega}\norm{b_\omega}_{B}\norm{\hat h-\hat h_n}_{C^1}
\end{equation*}
for which we will need a bound on $\sum_{\omega}\norm{b_\omega}_{B}$.
In section \ref{a:bomega} we show is less than \datax{bomega_sum}; the bounds from earlier $\frac{1}{2^{1-\gamma}(1-\gamma)}\leq$\datax{gamma_const} and $\norm{\hat h-\hat h_n}_{C^1}\leq$\datax{heta_error} allow us to prove that 
\[
\norm{Q\hat h-Q\hat h_n}_1\leq\text{\datax{III}}.
\]

We now bound
\begin{equation*}
\norm{Q\hat h_n-Q^{app}\hat h_n}_1\leq \frac{1}{2^{1-\gamma}(1-\gamma)}\left[\sup_\omega|a_\omega|\cdot\norm{\hat h'_n}_{C^0}\cdot\sum_{|\omega|>N^*}\norm{g_\omega '}_B+\norm{\hat h_n}_{C^0}\sum_{\omega>N^*}\norm{b_\omega}_{B}\right].
\end{equation*}

We need a bound on $\sum_{\omega}|a_\omega|$ which we show is bounded by $\datax{aomega_max}$ in section \ref{a:aomega}, and the bounds from computer approximations of $\hat h_n$ which gives us $\norm{\hat h_n}_{C^0}\leq$\datax{heta_max} and $\norm{\hat h'_n}_{C^0}\leq$\datax{hetaprime_max}. 

We can use the same method from section \ref{a:bomega} to calculate $\sum_{\omega>N^*}\norm{b_\omega}_{B}\leq$ \datax{bomega_tail}. All this together gives us 
\[
\norm{Q\hat h_n-Q^{app}\hat h_n}_1\leq\text{\datax{IV}}.
\]

\begin{figure}
   \includegraphics[width=55mm]{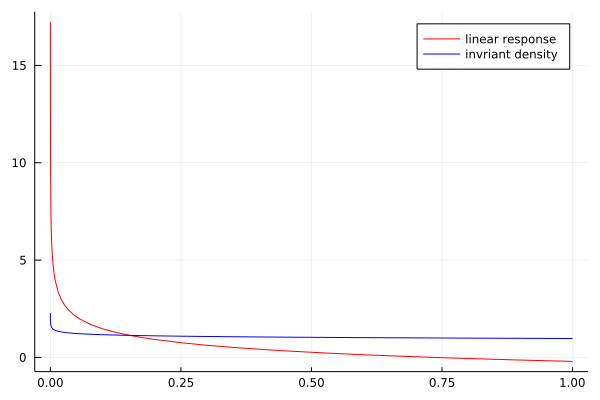}
   \caption{The linear response and invariant density of the LSV map for $\alpha=$\datax{alph}, calculated according to section \ref{ss:T} with $L^1$ error for the invariant density of \datax{hpb} and $L^1$ error for the linear response of \datax{hetastar_pb_error}.}
\end{figure}

\subsection{Normalizing the density and the linear response}
In this subsection we follow the estimates in Section \ref{sec:normalizing}.

First of all, we compute
\[
\norm{\frac{h_n}{\int h_n dm}-\frac{h}{\int h dm}}_1 \leq \textrm{\datax{normalised_pb}}.        
\]

Following through the calculations we bound \eqref{eq:finalbound}, the $L^1$ error on the normalized
linear response by \datax{normalisedstar_pb}.

\begin{figure}
   \includegraphics[width=55mm]{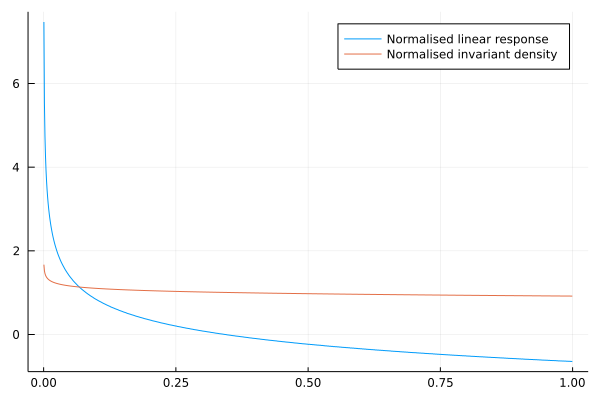}
   \caption{This shows the normalised invariant density of the LSV map at $\alpha=$\datax{alph} with the linear response of the normalised density.}
   \label{fig:Normalised}
\end{figure}

\section{Appendix: Effective bounds for \cite{K,BS}}\label{sec:appendix}
In this section we will use often the following notation following \cite{K}. Let $T_0$ be the left branch of the map $T$, let $z\in [0,1]$
and 
\[
z_r := T_0^{-r}(z).
\] 
By $(.)'$ we denote the derivative with respect to $z$.
To simplify the lookup of constants, they are presented in table \ref{tab:constants}.

\begin{table}[h!]
   \begin{center}
     \caption{Table of constants.}
     \label{tab:constants}
     \rotatebox{90}{
     \begin{tabular}{l|c|r} 
       \textbf{Label} & \textbf{Description} & \textbf{Value}\\
       \hline
       $\alpha$ & Parameter for the LSV map. & \datax{alph}\\
       $\gamma$ & Parameter for $\norm{\cdot}_B$. & \datax{gamma}\\
    $\delta_k$ & The size of the interval different from the true induced map. &\datax{delta_k}\\
       $\eta$ & Partition size for Ulam discretization. & \datax{eta}\\
    $N^*$ & The number of branches used to approximate operators $F$ and $Q$.& \datax{Nstar}\\
    $l^*$ & The number of itterations use to calculate $\hat h^*_\eta$.& \datax{lstar}\\
    $\nu$ & The number of Lasota-Yorke innequalities used to bound error in the Chebyshev projection.& \datax{Chebyshev_ν}\\
    $n$ & The number of Chebyshev polynomials used for the Chebyshev discretization.&\datax{Chebyshev_n}\\
    $C_1$ &$\frac{1}{1+\alpha2^\alpha}$&\datax{C_1}\\
    $C_2$ &$\frac{1}{\alpha(1-\alpha)2^{\alpha-1}}$&\datax{C_2}\\
    $C_3$ & $\frac{1}{\alpha}+\log{(C_1^{-1/\alpha})}$&\datax{C_3}\\
    $C_4$ &$\frac{1}{\alpha}-\frac{\log{(C_1^{1/\alpha}})}{\log{(2)}}$&\datax{C_4}\\
    $C_5$ &$2^\alpha$&\datax{C_5}\\
    $C_6$ &$(\alpha+1)2^\alpha$&\datax{C_6}\\
    $C_7$ &$\alpha(\alpha+1)2^\alpha$&\datax{C_7}\\
    $C_8$ & A computed value from section \ref{a:gomegaprime}. &\datax{C8}\\
    $C_{10}$ &$C_2\cdot C_5\cdot C_8$&\datax{C_10}\\
    $C_{11}$ &$C_2\cdot C_6\cdot C_8\frac{-\log {(1/C_2)+1}}{\alpha}$&\datax{C_11}\\
    $C_{12}$ &$2^\alpha C_2^2\cdot C_4\cdot C_7\cdot C_8\cdot $&\datax{C_12}\\
    $C_{sum}$ & $\frac{(\frac{1+\alpha-\gamma}{\gamma})^{-\gamma/\alpha+1/\alpha+1}(\alpha2^\alpha)^{-\gamma/\alpha}}{(\frac{1+\alpha-\gamma}{\gamma})^{1/\alpha+1}}$ & \datax{SummableConst}\\
    $D_0$ &A bound on the distortion of the branches of the induced map. &\datax{D0}\\
    $D$ &A bound on the distortion of the inverse of the branches of the induced map.&\datax{D}\\
     \end{tabular}
     }
   \end{center}
 \end{table}

\subsection{Estimating the tail $\sum_{\omega>N^*}\norm{g'_\omega}_{B}$}\label{a:gomegaprime}
For this we look at \cite{BS} lemma 5.2  which gives 
\begin{equation*}
\norm{g'_\omega}_B\leq C_8 \sup_{z\in (0,0.5]}z^\gamma (1+nz^\alpha \alpha 2^\alpha)^{-1/\alpha-1}.
\end{equation*}
using calculations from section \ref{a:bomega}
\begin{align*}
&\sup_{z\in (0,0.5]}z^\gamma (1+nz^\alpha \alpha 2^\alpha)^{-1/\alpha-1}\\
\leq&\sup_{z\in (0,0.5]}\frac{z^{\gamma-1-\alpha}(\alpha 2^\alpha)^{-1/\alpha-1}}{(z^{-\alpha}\alpha^{-1} 2^{-\alpha}+n)^{1/\alpha+1}}\\
\leq&C_{sum} n^{-\gamma/\alpha}\\
\end{align*}
so $\sum_{n>N^*}\norm{g'_\omega}_B\leq C_8 \cdot C_{sum}[\zeta(\gamma/\alpha)-\sum_{j=1}^{N^*}j^{-\gamma/\alpha}]$. 

The constant $C_8$ comes from \cite{K} where it is shown to be finite, but, when we calculate $C_8$ according to their proof we get 
$C_8=\exp{(1+(\alpha+1)^2 2^{2\alpha}C_2^2\frac{\pi^2}{6})}$, which is of order $10^{269}$.

Therefore we need a sharper bound for $C_8$. We start similarly
\begin{align*}
z_n'=&\Pi_{j=1}^n \frac{1}{1+(\alpha+1)2^\alpha z_j^\alpha}=\exp{(\sum_{j=1}^n -\log{(1+(\alpha+1)2^\alpha z_j^\alpha)})}\\
=&\exp{(\sum_{j=1}^n -(\alpha+1)2^\alpha z_j^\alpha+\sum_{j=1}^n[ -\log{(1+(\alpha+1)2^\alpha z_j^\alpha)+(\alpha+1)2^\alpha z_j^\alpha]})}\\
\leq&e\cdot (1+nz_0^\alpha \alpha 2^\alpha)^{-(\alpha+1)/\alpha}\cdot \exp{\sum_{j=1}^n[ -\log{(1+(\alpha+1)2^\alpha z_j^\alpha)+(\alpha+1)2^\alpha z_j^\alpha]})}
\end{align*}
where in the last line we use the calculation in \cite{K} following equation (5.7) which gives,
\begin{equation*}
-(\alpha+1)2^\alpha\sum_{j=1}^nz_j^\alpha\leq-\frac{\alpha+1}{\alpha}(\log{(1+nz_0^\alpha \alpha 2^\alpha)}+C
\end{equation*}
where $C$ comes from
\begin{equation*}
\sum_{j=1}^r\frac{1}{z_0^{-\alpha}+j\alpha2^\alpha}\geq \int_{1}^r \frac{z_0^\alpha}{1+tz_0^\alpha\alpha2^\alpha}dt-C.
\end{equation*}
Since the function in the integral is monotonically decreasing and $\frac{z_0^\alpha}{1+tz_0^\alpha\alpha2^\alpha}\leq1$ we can bound $C$ by $1$, which gives us the factor of $e$.
 
In the next paragraph we will use the Taylor expansion of $-\log(1+x)$, however this is only convergent for $x\in(-1,1)$, so first we choose a $j^*$ large enough that $\frac{-(\alpha+1)2^\alpha C_2}{j^*}\in(-1,1)$.
\begin{align*}
z_n'\leq &e\cdot (1+nz_0^\alpha \alpha 2^\alpha)^{-(\alpha+1)/\alpha}\cdot \exp{\sum_{j=1}^n[ -\log{(1+(\alpha+1)2^\alpha z_j^\alpha)+(\alpha+1)2^\alpha z_j^\alpha]})}\\
=&e\cdot (1+nz_0^\alpha \alpha 2^\alpha)^{-(\alpha+1)/\alpha}\cdot \exp{\sum_{j=1}^{j^*-1}[ -\log{(1+(\alpha+1)2^\alpha z_j^\alpha)+(\alpha+1)2^\alpha z_j^\alpha]})}\\
&\cdot \exp{\sum_{j=j^*}^n[ -\log{(1+(\alpha+1)2^\alpha z_j^\alpha)+(\alpha+1)2^\alpha z_j^\alpha]})}\\
\end{align*}

For $\exp{(\sum_{j=j^*}^n[ -\log{(1+(\alpha+1)2^\alpha z_j^\alpha)+(\alpha+1)2^\alpha z_j^\alpha]})}$ we use
\begin{align*}
&\exp{(\sum_{j=j^*}^n[ -\log{(1+(\alpha+1)2^\alpha z_j^\alpha)+(\alpha+1)2^\alpha z_j^\alpha]})}\\
\leq&\exp{(\sum_{j=j^*}^n[ -\log{(1+(\alpha+1)2^\alpha C_2 j^{-1})+(\alpha+1)2^\alpha  C_2 j^{-1}]})}.
\end{align*}
Substituting in the Taylor expansion of $\log(1+x)$ where $x=(\alpha+1)2^\alpha C_2j^{-1}$ gives
\begin{align*}
&\exp{(\sum_{j=j^*}^n\sum_{m=2}^\infty\frac{\left(-(\alpha+1)2^\alpha C_2\right)^m}{m}j^{-m})}\\
=&\exp{(\sum_{m=2}^\infty \sum_{j=j^*}^n\frac{\left(-(\alpha+1)2^\alpha C_2\right)^m}{m}j^{-m})}\\
=&\exp{(\sum_{m=2}^\infty\frac{\left(-(\alpha+1)2^\alpha C_2\right)^m}{m} \sum_{j=j^*}^n j^{-m})}\\
\leq&\exp{(\sum_{m=2}^\infty\frac{\left(-(\alpha+1)2^\alpha C_2\right)^m}{m}[\zeta(m)-\sum_{j=1}^{j^*}\frac{1}{j^{m}}])}\\
\leq&\exp{(\sum_{m=2}^\infty\frac{\left(-(\alpha+1)2^\alpha C_2\right)^m}{m}[\zeta(2)-\sum_{j=1}^{j^*}\frac{1}{j^{2}}])}\\
\leq&\exp{(-[\zeta(2)-\sum_{j=1}^{j^*}\frac{1}{j^2}]\cdot[\log{(1+(\alpha+1)2^\alpha C_2)}-(\alpha+1)2^\alpha C_2])}\\
=&\exp{((\alpha+1)2^\alpha C_2[\zeta(2)-\sum_{j=1}^{j^*}\frac{1}{j^2}])}\cdot(1+(\alpha+1)2^\alpha C_2)^{-[\zeta(2)-\sum_{j=1}^{j^*}\frac{1}{j^2}]}
\end{align*}

To get our final estimate we need to bound
\begin{equation*}
   \exp{\sum_{j=1}^{j^*-1}[ -\log{(1+(\alpha+1)2^\alpha z_j^\alpha)+(\alpha+1)2^\alpha z_j^\alpha]})},
\end{equation*}
Since there are a finite number of terms we can bound it from above through the use of rigorous numerical methods.

Choosing $j^*=$\datax{jstar} gives us that $\frac{-(\alpha+1)2^\alpha C_2}{j^*}=$\datax{C8jstarx}$\in (-1,1)$ and $C_8\geq$\datax{C8} is an upper bound.

This gives us that for $N^*=$\datax{Nstar}, $\sum_{\omega>N^*}\norm{g'_\omega}_B\leq$\datax{gprime_tail}.

\subsection{Bounds for lemma \ref{lem:tripleC2}}\label{a:DD0numbers}

We want a bound on $D_0=\norm{\frac{g_\omega''}{g_\omega'}}_\infty$. In \cite{K} they have bounds for $z_r$ where $g_{r+1}'=0.5z_r'$ and $g_{r+1}''=0.5z_r''$ so we may use the bound from \cite[Lemma 5.4]{K}  which gives
\begin{equation*}
\frac{z_r''}{z_r'}=\frac{\alpha(\alpha+1)2^\alpha z^{{\alpha-1}}_{r+1}z_{r+1}'}{1+(\alpha+1)2^{\alpha}z_{r+1}^\alpha}+\frac{z_{r+1}''}{z_{r+1}'}
\end{equation*}
which we can use to get a bound on $\frac{z_r''}{z_r'}-\frac{z_{r+1}''}{z_{r+1}'}$. We use lemma 5.2 and 5.3 from \cite{K} to get
\begin{equation*}
z_r'\leq C_8(\alpha 2^\alpha)^{-(\alpha+1)/\alpha}r^{-(\alpha+1)/\alpha}z_0^{-\alpha-1}
\end{equation*}
where we calculate $C_8$ in section \ref{a:gomegaprime}, and
\begin{equation*}
z_r^{\alpha-1}=(z_r^\alpha)^{(\alpha-1)/\alpha}\leq \left(\frac{2^{1-\alpha}}{\alpha(1-\alpha)}\right)^{(\alpha-1)/\alpha}r^{-(\alpha-1)/\alpha}
\end{equation*}
giving us
\begin{equation*}
z_r^{\alpha-1}z_r'\leq C_8 \frac{\alpha^{-2}2^{1-1/\alpha-2\alpha}}{(1-\alpha)^{(\alpha-1)/\alpha}}r^{-2}z_0^{-\alpha-1}.
\end{equation*}

Then
\begin{align*}
\frac{z_r''}{z_r'}-\frac{z_{r+1}''}{z_{r+1}'}\leq &\alpha(\alpha+1)2^\alpha C_8 \frac{\alpha^{-2}2^{1-1/\alpha-2\alpha}}{(1-\alpha)^{(\alpha-1)/\alpha}}r^{-2}z_0^{-\alpha-1}\\
\leq &(\alpha+1)C_8 \frac{\alpha^{-1}2^{1-1/\alpha-\alpha}}{(1-\alpha)^{(\alpha-1)/\alpha}}r^{-2}0.5^{-\alpha-1}
\end{align*}
from which follows that
\begin{equation*}
sup_r\norm{\frac{z_r''}{z_r'}}_\infty\leq \pi^2\frac{0.5^{-\alpha-1}}{6}(\alpha+1)C_8 \frac{\alpha^{-1}2^{1-1/\alpha-\alpha}}{(1-\alpha)^{(\alpha-1)/\alpha}}
\end{equation*}
which for $\alpha=\datax{alph}$ gives $D_0=\datax{D0}$.

Since
\begin{align*}
\log(\frac{g_\omega'(x)}{g_\omega'(y)})&=[\log(g_\omega'(\zeta))]'(x-y)\\
=&\frac{g_\omega''(\zeta)}{g_\omega'(\zeta)}(x-y)\leq\frac{g_\omega''(\zeta)}{g_\omega'(\zeta)},
\end{align*}
we have that $D\leq\exp(D_0)\leq$\datax{D}.

\subsubsection{Lasota-Yorke inequalitys for $\hat L_{\delta_k}$ and $\hat L$.}\label{a:Cstar}
We use some estimates from \cite{BGNN};
\begin{enumerate}
\item From proposition 7.2 we have $var(\hat Lf)\leq \lambda var(f)+B\norm{f}_{1}$ with $B:=\norm{\hat T''/(\hat T')^2}_\infty$ and $\lambda:=1/\inf_{x}|D_x\hat T|$;
\item From proposition 7.4 $\norm{\hat L^nf}_{C^1}\leq M\lambda^n\norm{f}_{C^1}+M^2\norm{f}_\infty$ with $M:=1+\frac{B}{1-\lambda}$;
\item From proposition7.6 $\norm{\hat L^nf}_{C^2}\leq M(\lambda^2)^n\norm{f}_{C^2}+D\norm{f}_{C^1}$ where $D:=\max\{3\frac{\lambda B M}{1-\lambda},3M\left(\frac{B}{1-\lambda}\right)^2+MZ\}+M\lambda+M^2$. $Z$ being 
\begin{equation*}
\frac{1}{1-\lambda^2}\left(\norm{\hat T'''/(\hat T')^3}_\infty+\frac{3\lambda}{1-\lambda}\norm{\hat T''/(\hat T')^2}_\infty\right).
\end{equation*}
\end{enumerate}
By the construction of $\hat T_{\delta_k}$ we know $1/\inf_{x}|D_x\hat T_{\delta_k}|=1/\inf_{x}|D_x\hat T|$, $\norm{\hat T_{\delta_k}''/(\hat T_{\delta_k}')^2}_\infty\leq \norm{\hat T''/(\hat T')^2}_\infty$ and $\norm{\hat T_{\delta_k}'''/(\hat T_{\delta_k}')^3}_\infty\leq \norm{\hat T'''/(\hat T')^3}_\infty$ so bounding these values for $\hat T$ gives us inequalities that are true for both.

We note that $\norm{\hat T''/(\hat T')^2}_\infty=\sup_{\omega}\norm{g_\omega''/(g_\omega')}_\infty=D_0$ which is calculated in section \ref{a:DD0numbers}. We can calculate $\norm{\hat T'''/(\hat T')^3}_\infty$ a similar way as follows,
\begin{align*}
\norm{\frac{T'''}{(T')^3}}_\infty=\norm{(\frac{g_\omega'''}{(g_\omega')^4}+3\frac{(g_\omega'')^2}{(g_\omega')^5})(g_\omega')^3}_\infty\leq \norm{\frac{g_\omega'''}{g_\omega'}}_\infty+3\norm{\left(\frac{g_\omega''}{g_\omega'}\right)^2}_\infty
\end{align*}
so we need to bound $\norm{\frac{g_\omega'''}{g_\omega'}}_\infty$ and $\norm{\left(\frac{g_\omega''}{g_\omega'}\right)^2}_\infty$, the second of which is $D_0^2$ from section \ref{a:DD0numbers}. In \cite{K} it is proven that $\norm{\frac{g_\omega'''}{g_\omega'}}_\infty$ is bounded and their method gives that it is less than
\begin{equation*}
\sum_{r=0}^\infty(\alpha-1)\alpha(\alpha+1)2^\alpha z_{r+1}^{\alpha-2}(z_{r+1}')^2+3\alpha(\alpha+1)2^\alpha z_{r+1}^{\alpha-1}z_{r+1}'\frac{z_{r+1}''}{z_{r+1}'}
\end{equation*}
where $g_\omega=z_r\circ g_1$, so $z_{r}'=\frac{g_\omega'}{2}$ and $z_r'''=\frac{g_\omega'''}{2}$. We have bounds on $\frac{z_{r+1}''}{z_{r+1}'}$ and $z_{r+1}^{\alpha-1}z_{r+1}'$ from section \ref{a:DD0numbers}, and we bound $z_{r+1}^{\alpha-2}$ using lemma 5.2 of \cite{K} to get $z_{r+1}^{\alpha-2}\leq (\frac{C_2}{r+1})^{(\alpha-2)/\alpha}$. We bound $(z_{r+1}')^2$ using lemma 5.3 of \cite{K} to get $(z_{r+1}')^2\leq C_8^2(2^\alpha \alpha)^{-2(\alpha+1)/\alpha}z_0^{-2(\alpha+1)}(r+1)^{-2(\alpha+1)/\alpha}$ so we can bound $\norm{\frac{g_\omega'''}{g_\omega'}}_\infty$ by
\begin{equation*}
\left(z_0^{-\alpha-4}\sum_{r=0}^\infty (r+1)^{-3}\right)[(\alpha-1)\alpha(\alpha+1)2^\alpha C_2^{(\alpha-2)/\alpha}C_8^2(2^\alpha \alpha)^{-2(\alpha+1)/\alpha}+3\alpha(\alpha+1)2^\alpha C]
\end{equation*}
where $C$ is the product of the values from section \ref{a:DD0numbers}. Substituting in the maximizing value of $z_0=0.5$ and note $\sum_{r=0}^\infty (r+1)^{-3}=\zeta(3)$ to get a bound of \datax{Cp01}.

This gives us a bound of $\norm{\frac{T'''}{(T')^3}}_\infty\leq$\datax{Cp} and we have
\begin{itemize}
\item $\lambda=0.5$,
\item $B\leq $\datax{TB},
\item $M\leq $\datax{TM},
\item $Z\leq $\datax{TZ},
\item $D\leq $\datax{TD}.
\end{itemize}
These values give us the explicit bounds
\begin{enumerate}
\item$\textrm{Var}(\hat L_{\delta_k}f)\leq 0.5 \textrm{Var}(f)+\datax{TB}\norm{f}_{1}$;\label{eq:LYL1}
\item $\norm{\hat L_{\delta_k}^nf}_{C^1}\leq \datax{TM}\cdot0.5^n\norm{f}_{C^1}+\datax{TM2}\norm{f}_\infty$;\label{eq:LYC1}
\item $\norm{\hat L_{\delta_k}^nf}_{C^2}\leq \datax{TM}\cdot0.25^n\norm{f}_{C^2}+\datax{TD}\norm{f}_{C^1}$.\label{eq:LYC2}
\end{enumerate}
These Lasota-Yorke inequalities give us the bounds $C^*=M\lambda+M^2=$\datax{Cstar} for lemma \ref{rem:approx1}.

For a bound on $\norm{\hat h_{\delta_k}}_{C^1}$ and $\norm{\hat h_{\delta_k}}_{C^1}$ we observe that $\norm{\hat h_{\delta_k}}_1=1$, and $\hat L_{\delta_k}\hat h_{\delta_k}=\hat h_{\delta_k}$, the inequalities above give us
\begin{enumerate}
\item $var(\hat h_{\delta_k})\leq \datax{TB}\implies \norm{\hat h_{\delta_k}}_{BV}\leq 1+\datax{TB}=$\datax{hDeltaBV}
\item $\norm{\hat h_{\delta_k}}_{C^1}\leq \datax{TM2}\norm{\hat h_{\delta_k}}_{\infty}\leq \datax{TM2}\norm{\hat h_{\delta_k}}_{BV}=$\datax{hDeltaC1}
\item $\norm{\hat h_{\delta_k}}_{C^2}\leq \datax{TD}\norm{\hat h_{\delta_k}}_{C^1}\leq $\datax{hDeltaC2}
\end{enumerate}

\subsection{Bounding $\sum_\omega \norm{\partial_\epsilon g_\omega'}_B$}\label{a:bomega}
For this we use from Lemma 5.2. \cite{BS}
\begin{align*}
\sum_\omega \norm{\partial_\epsilon g_\omega'}_B\leq &C_5\sum_{n=1}^\infty \sup_{\alpha\in U}\sup_{z\in (0,0.5]}|z^\gamma \cdot z_n'|\sum_{j=1}^n z_j^{\alpha}\\
+&C_6\sum_{n=1}^\infty \sup_{\alpha\in U}\sup_{z\in (0,0.5]}|z^\gamma \cdot z_n'|\sum_{j=1}^n z_j^{\alpha}|\log{z_j}|\\
+&C_7\sum_{n=1}^\infty \sup_{\alpha\in U}\sup_{z\in (0,0.5]}|z^\gamma \cdot z_n'|\sum_{j=1}^n z_j^{\alpha-1}|\partial_\alpha z_j|
\end{align*}
where $C_5=2^\alpha$, $C_6=(\alpha+1)2^\alpha$ and $C_7=\alpha(\alpha+1)2^\alpha$. We now use our bound from \ref{a:gomegaprime} get $C_8$ for
$z_n'\leq C_8(1+rz_0^\alpha2^\alpha)^{-\frac{\alpha+1}{\alpha}}$. We use this to bound $|z^\gamma \cdot z_n'|$ by $C_8 z^\gamma(1+nz_0^\alpha2^\alpha)^{-\frac{\alpha+1}{\alpha}}$.

We then use $z_n^\alpha<\frac{C_2}{n}$ to get $z_j^\alpha\leq C_2 j^{-1}$, $\log(z_j)\leq \frac{-\log{(1/{C_2})}+1}{\alpha}\log{j}$. Using the fact that $z_n^{\alpha-1}|\partial_\alpha z_n|=z_n^\alpha \frac{|\partial_\alpha z_n|}{z_n}$ and inequality (5.9) from \cite{K}
\begin{equation*}
z_n^{\alpha-1}|\partial_\alpha z_n|\leq  z_n^\alpha\sum_{j=1}^n 2^\alpha z_j^\alpha (-\log{(2z_j)})\leq 2^\alpha\frac{C_2^2}{n}\sum_{j=1}^nj^{-1}(-\log(2z_j))
\end{equation*}
We now use the following
\begin{equation*}
-\log(2z_j)\leq C_4 \log(j)
\end{equation*}
To get a value on $C_4$ we do the following,
\begin{align*}
\frac{-\log(2z_j)}{\log(j)}&\leq \frac{-\log(2(\frac{C_1}{j})^{1/\alpha}z_0)}{\log(j)}\leq \frac{-\log(2C_1^{\frac{1}{\alpha}}z_0)+\frac{1}{\alpha}\log(j)}{\log(j)}\\
&\leq \frac{1}{\alpha}-\frac{\log(2C_1^{\frac{1}{\alpha}}z_0)}{\log(j)}\leq \frac{1}{\alpha}-\frac{\log(2C_1^{\frac{1}{\alpha}}z_0)}{\log(2)}\\
&=C_4 \leq \text{\datax{C_4}}
\end{align*}
where we note that $\partial_\alpha z_j=0$ for $|\omega|=1$ and so $j=1$ this is still a valid bound for $z_j^{\alpha-1}|\partial_\alpha z_j|$.

which gives us $\sum_\omega \norm{b_\omega}_B\leq (I)+(II)+(III)$ where
\begin{align*}
(I)\leq &C_{10}\sum_{n=1}^\infty \sup_{\alpha\in U}\sup_{z\in (0,0.5]}z^\gamma (1+nz^\alpha \alpha 2^\alpha)^{-1/\alpha-1}\sum_{j=1}^n (j^{-1})\\
(II)\leq &C_{11}\sum_{n=1}^\infty \sup_{\alpha\in U}\sup_{z\in (0,0.5]}z^\gamma (1+nz^\alpha \alpha 2^\alpha)^{-1/\alpha-1}\sum_{j=1}^n (j^{-1}\log{j})\\
(III)\leq &C_{12}\sum_{n=1}^\infty \sup_{\alpha\in U}\sup_{z\in (0,0.5]}z^\gamma (1+nz^\alpha \alpha 2^\alpha)^{-1/\alpha-1}\sum_{j=1}^n (j^{-1}\sum_{k=1}^j k^{-1}\log{k})
\end{align*}
where $C_{10}=C_5 \cdot C_8\cdot C_2$, $C_{11}=C_6\cdot C_8\cdot C_2 \cdot \frac{-\log{(1/{C_2})}+1}{\alpha}$ and $C_{12}=2^\alpha C_4\cdot C_7\cdot C_8 \cdot  C_2^2$.

To bound $(I)$ we use that $\sum_{j=1}^nj^{-1}\leq 1+\log(n)$ to get
\begin{align*}
z^\gamma& (1+nz^\alpha \alpha 2^\alpha)^{-1/\alpha-1}\sum_{j=1}^n (j^{-1})\leq\frac{z^\gamma (1+\log{(n)})}{(1+nz^\alpha \alpha 2^\alpha)^{1/\alpha+1}}\\
\leq&\frac{z^{\gamma-\alpha(1/\alpha +1)}(\alpha 2^\alpha)^{-1/\alpha-1}(1+\log{(n)})}{(z^{-\alpha}\alpha^{-1} 2^{-\alpha}+n)^{1/\alpha+1}}\leq\frac{z^{\gamma-1-\alpha}(\alpha 2^\alpha)^{-1/\alpha-1}(1+\log{(n)})}{(z^{-\alpha}\alpha^{-1} 2^{-\alpha}+n)^{1/\alpha+1}}.
\end{align*}
We can use this to bound 
\begin{align*}
&\sum_{n=1}^\infty \sup_{\alpha\in U}\sup_{z\in (0,0.5]}z^\gamma (1+nz^\alpha \alpha 2^\alpha)^{-1/\alpha-1}(1+\log (n))\\
\leq&\sum_{n=1}^\infty \sup_{\alpha\in U}\sup_{z\in (0,0.5]}\frac{z^{\gamma-1-\alpha}(\alpha 2^\alpha)^{-1/\alpha-1}(1+\log{(n)})}{(z^{-\alpha}\alpha^{-1} 2^{-\alpha}+n)^{1/\alpha+1}}\\
=&\sum_{n=1}^\infty \sup_{\alpha\in U}\sup_{z\in (0,0.5]}\frac{z^{\gamma-1-\alpha}(\alpha 2^\alpha)^{-1/\alpha-1}}{(z^{-\alpha}\alpha^{-1} 2^{-\alpha}+n)^{1/\alpha+1}}+\frac{z^{\gamma-1-\alpha}(\alpha 2^\alpha)^{-1/\alpha-1}\log{(n)}}{(z^{-\alpha}\alpha^{-1} 2^{-\alpha}+n)^{1/\alpha+1}}
\end{align*}
In order to calculate bounds we must find the $z\in (0,0.5]$ that gives us the maximum value, which we do by finding the zero of the derivative of the part that depends on $z$,
\begin{align*}
&\partial_z \frac{z^{\gamma-1-\alpha}}{(z^{-\alpha}\alpha^{-1}2^{-\alpha}+n)^{1/\alpha+1}}\\
=&\frac{\partial_zz^{\gamma-1-\alpha}}{(z^{-\alpha}\alpha^{-1}2^{-\alpha}+n)^{1/\alpha+1}}+z^{\gamma-1-\alpha}\partial_z\frac{1}{(z^{-\alpha}\alpha^{-1}2^{-\alpha}+n)^{1/\alpha+1}}\\
=&\frac{(\gamma-1-\alpha)z^{\gamma-2-\alpha}}{(z^{-\alpha}\alpha^{-1}2^{-\alpha}+n)^{1/\alpha+1}}
+z^{\gamma-1-\alpha}\frac{-(1/\alpha+1)\cdot-\alpha\cdot z^{-\alpha-1}\cdot \alpha^{-1}2^{-\alpha}}{(z^{-\alpha}\alpha^{-1}2^{-\alpha}+n)^{1/\alpha+2}}\\
=&\frac{(\gamma-1-\alpha)z^{\gamma-2-\alpha}}{(z^{-\alpha}\alpha^{-1}2^{-\alpha}+n)^{1/\alpha+1}}
+\frac{(1+\alpha)\cdot z^{\gamma-2\alpha-2}\cdot \alpha^{-1}2^{-\alpha}}{(z^{-\alpha}\alpha^{-1}2^{-\alpha}+n)^{1/\alpha+2}}\\
=&\frac{z^{\gamma-\alpha-2}}{(z^{-\alpha}\alpha^{-1}2^{-\alpha}+n)^{1/\alpha+1}}\left((\gamma-1-\alpha)+\frac{(1+\alpha)z^{-\alpha}\alpha^{-1}2^{-\alpha}}{(z^{-\alpha}\alpha^{-1}2^{-\alpha}+n)}\right)
\end{align*}
which is zero when $-(\gamma-1-\alpha)=\frac{(1+\alpha)z^{-\alpha}\alpha^{-1}2^{-\alpha}}{(z^{-\alpha}\alpha^{-1}2^{-\alpha}+n)}$, we let $y=z^{-\alpha}\alpha^{-1}2^{-\alpha}$ which gives
\begin{align*}
&-(\gamma-1-\alpha)=(1+\alpha)\frac{y}{y+n}\\
\implies&-(\gamma-1-\alpha)n+(1+\alpha)y-\gamma y=(1+\alpha)y\\
\implies&-(\gamma-1-\alpha)n-\gamma y=0\\
\end{align*}
Therefore $y=\frac{(1+\alpha-\gamma)}{\gamma}n$ and 
\[
z=(\alpha 2^\alpha\frac{(1+\alpha-\gamma)}{\gamma}n)^{-1/\alpha}.
\]
We substitute this into the first sum
\begin{align*}
&\sum_{n=1}^\infty\sup_{\alpha\in U}\sup_{z\in (0,0.5]}\frac{z^{\gamma-1-\alpha}(\alpha 2^\alpha)^{-1/\alpha-1}}{(z^{-\alpha}\alpha^{-1} 2^{-\alpha}+n)^{1/\alpha+1}}\\
=&\sum_{n=1}^\infty\sup_{\alpha\in U}\frac{(\alpha 2^\alpha\frac{(1+\alpha-\gamma)}{\gamma}n)^{-\gamma/\alpha+1/\alpha+1}(\alpha 2^\alpha)^{-1/\alpha-1}}{(\alpha 2^\alpha\frac{(1+\alpha-\gamma)}{\gamma}n\alpha^{-1} 2^{-\alpha}+n)^{1/\alpha+1}},\\
\end{align*}
therefore
\begin{align*}
&\sum_{n=1}^\infty\sup_{\alpha\in U}\frac{(\frac{(1+\alpha-\gamma)}{\gamma})^{-\gamma/\alpha+1/\alpha+1}n^{-\gamma/\alpha}(\alpha 2^\alpha)^{-\gamma/\alpha}}{(\frac{1+\alpha-\gamma}{\gamma}+1)^{1/\alpha+1}}\\
=&\sup_{\alpha\in U}\frac{(\frac{(1+\alpha-\gamma)}{\gamma})^{-\gamma/\alpha+1/\alpha+1}(\alpha 2^\alpha)^{-\gamma/\alpha}}{(\frac{1+\alpha-\gamma}{\gamma}+1)^{1/\alpha+1}}\sum_{n=1}^\infty n^{-\gamma/\alpha}\\
=&\sup_{\alpha\in U}\frac{(\frac{(1+\alpha-\gamma)}{\gamma})^{-\gamma/\alpha+1/\alpha+1}(\alpha 2^\alpha)^{-\gamma/\alpha}}{(\frac{1+\alpha-\gamma}{\gamma}+1)^{1/\alpha+1}}\zeta (\gamma/\alpha).
\end{align*}
By the same calculation we have the second sum is bounded by
\begin{align*}
&\sum_{n=1}^\infty \sup_{\alpha\in U}\sup_{z\in (0,0.5]}\frac{z^{\gamma-1-\alpha}(\alpha 2^\alpha)^{-1/\alpha-1}\log{(n)}}{(z^{-\alpha}\alpha^{-1} 2^{-\alpha}+n)^{1/\alpha+1}}\\
\leq&\sup_{\alpha\in U}\frac{(\frac{(1+\alpha-\gamma)}{\gamma})^{-\gamma/\alpha+1/\alpha+1}(\alpha 2^\alpha)^{-\gamma/\alpha}}{(\frac{1+\alpha-\gamma}{\gamma}+1)^{1/\alpha+1}}\sum_{n=1}^\infty n^{-\gamma/\alpha}\log(n)\\
=&\sup_{\alpha\in U}\frac{(\frac{(1+\alpha-\gamma)}{\gamma})^{-\gamma/\alpha+1/\alpha+1}(\alpha 2^\alpha)^{-\gamma/\alpha}}{(\frac{1+\alpha-\gamma}{\gamma}+1)^{1/\alpha+1}}|\zeta'(\gamma/\alpha)|
\end{align*}
 which gives us
\begin{equation*}
(I) \leq C_{10}\cdot C_{sum} (\zeta(\gamma/\alpha)+|\zeta'(\gamma/\alpha)|).
\end{equation*}
where $C_{sum}=\sup_{\alpha\in U}\frac{(\frac{(1+\alpha-\gamma)}{\gamma})^{-\gamma/\alpha+1/\alpha+1}(\alpha 2^\alpha)^{-\gamma/\alpha}}{(\frac{1+\alpha-\gamma}{\gamma}+1)^{1/\alpha+1}}\leq$\datax{SummableConst}.

To bound $(II)$ we do the same, but using $\sum_{j=1}^n\log{(j)}j^{-1}\leq \log^2{(n)}$
\begin{align*}
&\sum_{n=1}^\infty \sup_{\alpha\in U}\sup_{z\in (0,0.5]}z^\gamma (1+nz^\alpha \alpha 2^\alpha)^{-1/\alpha-1}\sum_{j=1}^n (j^{-1}\log{j})\\
\leq&C_{sum}\sum_{n=1}^\infty n^{-\gamma/\alpha}\log^2{(n)}\\
\leq&C_{sum}\zeta''(\gamma/\alpha)
\end{align*}
giving
\begin{equation*}
(II) \leq C_{11}\cdot C_{sum}\zeta''(\gamma/\alpha).
\end{equation*}

For $(III)$ we use $\sum_{j=1}^n (j^{-1}\sum_{k=1}^j k^{-1}\log{k})\leq \sum_{j=1}^n (j^{-1}\log^2(j))\leq \log^3(n)$ to get
\begin{align*}
&\sum_{n=1}^\infty \sup_{\alpha\in U}\sup_{z\in (0,0.5]}z^\gamma (1+nz^\alpha \alpha 2^\alpha)^{-1/\alpha-1}\sum_{j=1}^n (j^{-1}\sum_{k=1}^j k^{-1}\log{k})\\
\leq&C_{sum}\sum_{n=1}^\infty n^{-\gamma/\alpha}\log^3{(n)}\\
\leq&C_{sum}|\zeta'''(\gamma/\alpha)|\\
\end{align*}
This implies directly that
\begin{equation*}
(III) \leq C_{12}\cdot C_{sum}\zeta'''(\gamma/\alpha).
\end{equation*}

In order to get the bound closer, we can use the tecnique of calculating the first $N$ terms of $\sum_{n=1}^\infty \norm{b_\omega}_B$ using the computer calculations from \ref{sec:A0B0} and the range estimation method from \cite{Tucker}.
\begin{align*}
\sum_\omega \norm{b_\omega}_B&\leq   C_{10}\cdot C_{sum}(\zeta(\gamma/\alpha)+|\zeta'(\gamma/\alpha)|-\sum_{j=1}^N[j^{-\gamma/\alpha}+j^{-\gamma/\alpha}\log{(j)}])\\
+&C_{11}\cdot C_{sum}(\zeta''(\gamma/\alpha)-\sum_{j=1}^N[j^{-\gamma/\alpha}\log^2{(j)}])\\
+&C_{12}\cdot C_{sum}(|\zeta'''(\gamma/\alpha)|-\sum_{j=1}^N[j^{-\gamma/\alpha}\log^3(j)])\\
+&\sum_{1\leq|\omega|\leq N}\norm{b_\omega}_B.\\
\end{align*}
We calculate upper bounds on the derivatives of $\zeta(x)$ using methods from \cite{C}.

Choosing $N=$\datax{Nstar} and $j^*=$\datax{jstar} gives us $\sum_\omega \norm{b_\omega}_B\leq$ \datax{bomega_sum}. The tail of the sum starting at $n=$\datax{Nstar} gives $\sum_{|\omega|\geq n} \norm{b_\omega}_B\leq$ \datax{bomega_tail}.

\subsection{Bounding $\sup_\omega |a_\omega|$}\label{a:aomega}
For $\sup_\omega |a_\omega|$ we use lemma 5.2 from \cite{K}. The proof of this lemma gives us
\begin{equation*}
\frac{z_0^\alpha}{n}\cdot\frac{1}{1+\alpha 2^\alpha}\leq z_n^\alpha\leq \frac{1}{z_0^{-\alpha}+n\alpha(1-\alpha)2^{\alpha-1}}
\end{equation*}
from which we get $C_1=\frac{1}{1+\alpha 2^\alpha}$ and $C_2=\frac{1}{\alpha(1-\alpha)2^{\alpha-1}}$
\begin{equation}\label{eq:Korepanov5.2}
\frac{z_0^\alpha}{n}\cdot C_1\leq z_n^\alpha\leq \frac{1}{n}\cdot C_2.
\end{equation}
Then $z_0^\alpha \frac{C_1}{n}\leq z_n^\alpha$ gives us $-\log{z_0}\leq\frac{-1}{\alpha}\log{\frac{C_1}{n}}-\log{z_0}$. To get a $C_3$ such that $\frac{-1}{\alpha}\log{\frac{C_1}{n}}\leq C_3 \logg{(n)}$ we take $C_3=\log{(C_1^{-1/\alpha})}+\frac{1}{\alpha}$. Since $C_3>1$
\begin{equation*}
-\log{z_0}\leq C_3( \logg{(n)}-\log{z_0}).
\end{equation*}

Then from the proof of lemma 5.2 from \cite{BS} we have
\begin{equation}\label{eq:Wael5.2}
\partial_\alpha z_{n+1}\leq2^\alpha\sum^{n+1}_{j=1}z_j^{\alpha+1}(-\log{2z_j})
\end{equation}
where $\sup_\omega |a_\omega|\leq \sup_{z_0\in [0,0.5]}\partial_\alpha z_{n+1}$. We use the fact that $x^{\alpha+1}(-\log{2x})$ is monotonicly increasing below $x=0.5\exp{(\frac{-1}{\alpha+1})}$ to say that if $C_2^{1/\alpha}{j^*}^{-1/\alpha}\leq0.5\exp{(\frac{-1}{\alpha+1})}$ then
\begin{align*}
\partial_\alpha z_{n+1}\leq&2^\alpha\sum^{n+1}_{j=1}z_j^{\alpha+1}(-\log{2z_j})\\
=&2^\alpha\sum^{n+1}_{j=j^*}z_j^{\alpha+1}(-\log{2z_j})+2^\alpha\sum^{j^*-1}_{j=1}z_j^{\alpha+1}(-\log{2z_j}).
\end{align*}
We may use a computer to calculate the sum up to $j^*-1$ and we bound the rest as follows,
\begin{align*}
&2^\alpha\sum^{n+1}_{j=j^*}z_j^{\alpha+1}(-\log{2z_j})\\
\leq&2^\alpha\sum^{n+1}_{j=j^*}{[C_2^{1/\alpha}j^{-1/\alpha}]}^{\alpha+1}(-\log{(2C_2^{1/\alpha}j^{-1/\alpha})})\\
=&2^\alpha\sum^{n+1}_{j=j^*}{[C_2^{1/\alpha}j^{-1/\alpha}]}^{\alpha+1}(-\log{(j^{-1/\alpha})}-\log{(2C_2^{1/\alpha})})\\
\leq&2^\alpha\sum^{n+1}_{j=j^*}{[C_2^{1/\alpha}j^{-1/\alpha}]}^{\alpha+1}(-\log{(j^{-1/\alpha})})\\
\leq&\frac{2^{\alpha} C_2^{(\alpha+1)/\alpha}}{\alpha}\sum^{n+1}_{j=j^*}j^{-1-1/\alpha}\log{j}.
\end{align*}
Noticing that $\sum^{\infty}_{j=1}j^{-1-1/\alpha}\log{j}=-\zeta'(1+1/\alpha)$ which can be calculated by methods from \cite{C} gives us
\begin{equation}
\sup_\omega |a_\omega|\leq \frac{[-\zeta'(1+1/\alpha)-\sum_{j=1}^{j^*-1}j^{-1-1/\alpha}\log{j}]2^{\alpha} }{\alpha(\alpha(1-\alpha)2^{\alpha-1})^{(\alpha+1)/\alpha}}+2^\alpha\sum^{j^*-1}_{j=1}z_j^{\alpha+1}(-\log{2z_j})
\end{equation}
which for $\alpha=$\datax{alph} and taking $j^*=$\datax{jstar} gives $\sup_\omega |a_\omega|\leq$\datax{aomega_max}

\section{Appendix: Computing derivatives}\label{a:ComputationalMethods}\label{sec:A0B0}

In order to calculate $A_0$, $B_0$, $a_\omega$ and $b_\omega$ we use an iterative formula. We start with \marginnotet{some changes here}
\begin{equation*}
g_\omega\circ T_\omega(x)=x
\end{equation*}
from which we get
\begin{align*}
(g_\omega\circ T_\omega)'(x)=&g_\omega'\circ T_\omega(x)\cdot T_\omega'(x)=1\\
\implies g_\omega'(x)=&\frac{1}{T_\omega'\circ g_\omega(x)}
\end{align*}
and
\begin{align*}
\partial_\alpha(g_\omega\circ T_\omega)(x)=&\partial_\alpha g_\omega \circ T_\omega(x)+ g_\omega'\circ T_\omega(x)\cdot \partial_\alpha T_\omega(x)=0\\
\implies \partial_\alpha g_\omega(x)=&-\frac{\partial_\alpha T_\omega\circ g_\omega(x)}{T_\omega'\circ g_\omega(x)}.
\end{align*}
We then use these to get
\begin{align*}
(g_\omega'\circ T_\omega)'(x)=&g_\omega''\circ T_\omega(x)\cdot T_\omega'(x)=-\frac{T_\omega''(x)}{(T_\omega'(x))^2}\\
\implies g_\omega''(x)=&-\frac{T_\omega''\circ g_\omega(x)}{(T_\omega'\circ g_\omega(x))^3}
\end{align*}
and
\begin{align*}
\partial_\alpha(g_\omega'\circ T_\omega)(x)=&\partial_\alpha g_\omega' \circ T_\omega(x)+ g_\omega''\circ T_\omega(x)\cdot \partial_\alpha T_\omega(x)=\partial_\alpha \frac{1}{T_\omega'(x)}\\
\implies \partial_\alpha g_\omega'(x)=&\frac{T_\omega''\circ g_\omega(x)\cdot \partial_\alpha T_\omega\circ g_\omega(x)}{(T_\omega'\circ g_\omega)^3}-\frac{\partial_\alpha T_\omega'\circ g_\omega(x)}{(T_\omega'\circ g_\omega(x))^2}.
\end{align*}

We already can calculate $g_\omega$ so we need to calculate $\partial_\alpha T_\omega$, $T_\omega'$, $\partial_\alpha T_\omega'$ and $T_\omega''$. Note that $T_\omega = T_0^{n}\circ T_1$ where $|\omega|=n$, so
\begin{align*}(T_0^{n}\circ T_1)'=&(T_0^{n})'\circ T_1\cdot T_1'\\
\partial_\alpha(T_0^{n}\circ T_1)=&\partial_\alpha (T_0^{n})\circ T_1 + (T_0^{n})'\circ T_1\cdot \partial_\alpha T_1\\
(T_0^{n}\circ T_1)''=&(T_0^{n})''\circ T_1\cdot (T_1')^2+(T_0^{n})'\circ T_1\cdot T_1''\\
\partial_\alpha(T_0^{n}\circ T_1)'=&\partial_\alpha (T_0^{n})'\circ T_1\cdot T_1' + (T_0^{n})''\circ T_1\cdot T_1'\cdot \partial_\alpha T_1+(T_0^{n})'\circ T_1\cdot \partial_\alpha T_1'\\
(T_0^{n}\circ T_1)'''=&(T_0^n)'''\circ T_1\cdot (T_1')^3+3(T_0^n)''\circ T_1\cdot T_1'\cdot T_1''+(T_0^{n})'\circ T_1\cdot T_1'''\\
\partial_\alpha(T_0^{n}\circ T_1)''=&\partial_\alpha (T_0^{n})''\circ T_1\cdot (T_1')^2 +\partial_\alpha (T_0^{n})'\circ T_1\cdot T_1'' +(T_0^{n})'''\circ T_1\cdot (T_1')^2\cdot \partial_\alpha T_1\\
&+(T_0^{n})''\circ T_1\cdot (T_1''\cdot \partial_\alpha T_1+2\cdot T_1'\cdot \partial_\alpha T_1')+(T_0^{n})'\circ T_1\cdot \partial_\alpha T_1''
\end{align*}
which we may write as a matrix
\[
\begin{pmatrix}
T_\omega'\\
\partial_\alpha T_\omega\\
T_\omega''\\
\partial_\alpha T_\omega'\\
T_\omega'''\\
\partial_\alpha T_\omega''
\end{pmatrix}=
\begin{pmatrix}
T_1' & 0 & 0 & 0 & 0 & 0\\
\partial_\alpha T_1 & 1 & 0 & 0 & 0 & 0\\
T_1''& 0 & (T_1')^2 & 0 & 0 & 0\\
\partial_\alpha T_1' & 0 & T_1'\partial_\alpha T_1 & T_1' & 0 & 0\\
T_1''' & 0 & 3T_1''\cdot T_1' & 0 & T_1'^3 & 0\\
\partial_\alpha T_1'' & 0 & T_1''\cdot \partial_\alpha T_1+2\cdot T_1'\cdot \partial_\alpha T_1' & T_1'' & (T_1')^2\cdot \partial_\alpha T_1 & (T_1')^2
\end{pmatrix}\cdot
\begin{pmatrix}
(T_0^{n})'\circ T_1\\
\partial_\alpha(T_0^{n})\circ T_1\\
(T_0^{n})''\circ T_1\\
\partial_\alpha(T_0^{n})'\circ T_1\\
(T_0^{n})'''\circ T_1\\
\partial_\alpha (T_0^{n})''\circ T_1
\end{pmatrix}.
\]
By the same logic we may write
\[
\begin{pmatrix}
T_\omega'\\
\partial_\alpha T_\omega\\
T_\omega''\\
\partial_\alpha T_\omega'\\
T_\omega'''\\
\partial_\alpha T_\omega''
\end{pmatrix}=
\begin{pmatrix}
T_1' & 0 & 0 & 0 & 0 & 0\\
\partial_\alpha T_1 & 1 & 0 & 0 & 0 & 0\\
T_1''& 0 & (T_1')^2 & 0 & 0 & 0\\
\partial_\alpha T_1' & 0 & T_1'\partial_\alpha T_1 & T_1' & 0 & 0\\
T_1''' & 0 & 3T_1''\cdot T_1' & 0 & T_1'^3 & 0\\
\partial_\alpha T_1'' & 0 & T_1''\cdot \partial_\alpha T_1+2\cdot T_1'\cdot \partial_\alpha T_1' & T_1'' & (T_1')^2\cdot \partial_\alpha T_1 & (T_1')^2
\end{pmatrix}\]
\[
\cdot
\begin{pmatrix}
T_0'\circ T_1 & 0 & 0 & 0 & 0 & 0\\
\partial_\alpha T_0\circ T_1 & 1 & 0 & 0 & 0 & 0\\
T_0''\circ T_1& 0 & (T_0'\circ T_1)^2 & 0 & 0 & 0\\
\partial_\alpha T_0'\circ T_1 & 0 & T_0'\circ T_1\partial_\alpha T_0\circ T_1 & T_0'\circ T_1 & 0 & 0\\
T_0'''\circ T_1 & 0 & 3T_0''\circ T_1\cdot T_0'\circ T_0 & 0 & (T_0'\circ T_1)^3 & 0\\
\partial_\alpha T_0''\circ T_1 & 0 & T_0''\circ T_1\cdot \partial_\alpha T_0\circ T_1+2\cdot T_0'\circ T_1\cdot \partial_\alpha T_0'\circ T_1 & T_0''\circ T_1 & (T_0'\circ T_1)^2\cdot \partial_\alpha T_0\circ T_1 & (T_0'\circ T_1)^2
\end{pmatrix}\]
\[
\cdot
\begin{pmatrix}
(T_0^{n-1})'\circ T_0\circ T_1\\
\partial_\alpha(T_0^{n-1})\circ T_0\circ T_1\\
(T_0^{n-1})''\circ T_0\circ T_1\\
\partial_\alpha(T_0^{n-1})'\circ T_0\circ T_1\\
(T_0^{n-1})'''\circ T_0\circ T_1\\
\partial_\alpha(T_0^{n-1})''\circ T_0\circ T_1
\end{pmatrix}.
\]
and use induction to give a series of matrices such that
\[
\begin{pmatrix}
T_\omega'\\
\partial_\alpha T_\omega\\
T_\omega''\\
\partial_\alpha T_\omega'\\
T_\omega'''\\
\partial_\alpha T_\omega''
\end{pmatrix}=
\begin{pmatrix}
T_1' & 0 & 0 & 0 & 0 & 0\\
\partial_\alpha T_1 & 1 & 0 & 0 & 0 & 0\\
T_1''& 0 & (T_1')^2 & 0 & 0 & 0\\
\partial_\alpha T_1' & 0 & T_1'\partial_\alpha T_1 & T_1' & 0 & 0\\
T_1''' & 0 & 3T_1''\cdot T_1' & 0 & T_1'^3 & 0\\
\partial_\alpha T_1'' & 0 & T_1''\cdot \partial_\alpha T_1+2\cdot T_1'\cdot \partial_\alpha T_1' & T_1'' & (T_1')^2\cdot \partial_\alpha T_1 & (T_1')^2
\end{pmatrix}\]
\[
\cdot
\begin{pmatrix}
T_0'\circ T_1 & 0 & 0 & 0 & 0 & 0\\
\partial_\alpha T_0\circ T_1 & 1 & 0 & 0 & 0 & 0\\
T_0''\circ T_1& 0 & (T_0'\circ T_1)^2 & 0 & 0 & 0\\
\partial_\alpha T_0'\circ T_1 & 0 & T_0'\circ T_1\partial_\alpha T_0\circ T_1 & T_0'\circ T_1 & 0 & 0\\
T_0'''\circ T_1 & 0 & 3T_0''\circ T_1\cdot T_0'\circ T_0 & 0 & (T_0'\circ T_1)^3 & 0\\
\partial_\alpha T_0''\circ T_1 & 0 & T_0''\circ T_1\cdot \partial_\alpha T_0\circ T_1+2\cdot T_0'\circ T_1\cdot \partial_\alpha T_0'\circ T_1 & T_0''\circ T_1 & (T_0'\circ T_1)^2\cdot \partial_\alpha T_0\circ T_1 & (T_0'\circ T_1)^2
\end{pmatrix}\]
\[
\dots
\begin{pmatrix}
T_0'\circ T_0^{n-1}\circ T_1\\
\partial_\alpha(T_0)\circ T_0^{n-1}\circ T_1\\
(T_0)''\circ T_0^{n-1}\circ T_1\\
\partial_\alpha(T_0)'\circ T_0^{n-1}\circ T_1\\
(T_0)'''\circ T_0^{n-1}\circ T_1\\
\partial_\alpha(T_0)''\circ T_0^{n-1}\circ T_1
\end{pmatrix}.
\]
Using
\begin{align*}
&T_0 = x(1+(2x)^\alpha)\\
&T_1 = 2x-1\\
&T_0' = 1+(1+\alpha)(2x)^\alpha\\
&T_1' = 2\\
&\partial_\alpha T_0 = (\log(x)+\log(2))2^\alpha x^{\alpha+1}\\
&\partial_\alpha T_1 = 0\\
&T_0'' = \alpha (1+\alpha) 2^\alpha x^{\alpha-1}\\
&T_1'' = 0\\
&\partial_\alpha T_0' = (2x)^\alpha ((\alpha+1)(\log(x)+\log (2))+1)\\
&\partial_\alpha T_1' = 0\\
&T_0''' =(\alpha-1) \alpha (\alpha+1) 2^\alpha x^{\alpha-2}\\
&T_1''' = 0\\
&\partial_\alpha T_0'' = 2^\alpha x^{\alpha-1}(\alpha^2\log(2)+(\alpha+1)\alpha\log(x)+\alpha(2+\log(2))+1)\\
&\partial_\alpha T_1'' = 0
\end{align*}
we are able to calculate explicitly the values $A_0$, $B_0$, $a_\omega$ and $b_\omega$. To calculate $a_\omega$ and $b_\omega$ we use $g_\omega=g_1\circ g_0^{n-1}$ and we use $T_0^{m}\circ T_1\circ g_1\circ g_0^{n-1}= g_0^{n-1-m}$ to calculate (leaving the last two rows and collumns for brevity)
\[
\begin{pmatrix}
T_\omega'\circ g_\omega\\
\partial_\alpha T_\omega\circ g_\omega\\
T_\omega''\circ g_\omega\\
\partial_\alpha T_\omega'\circ g_\omega
\end{pmatrix}=
\begin{pmatrix}
T_1'\circ g_\omega & 0 & 0 & 0\\
\partial_\alpha T_1\circ g_\omega & 1 & 0 & 0\\
T_1''\circ g_\omega& 0 & (T_1')^2\circ g_\omega & 0\\
\partial_\alpha T_1'\circ g_\omega & 0 & T_1'\circ g_\omega\partial_\alpha T_1\circ g_\omega & T_1'\circ g_\omega
\end{pmatrix}\\
\]
\[\cdot
\begin{pmatrix}
T_0'\circ g_0^{n} & 0 & 0 & 0\\
\partial_\alpha T_0\circ g_0^{n} & 1 & 0 & 0\\
T_0''\circ g_0^{n}& 0 & (T_0'\circ g_0^{n})^2 & 0\\
\partial_\alpha T_0'\circ g_0^{n} & 0 & T_0'\circ g_0^{n}\partial_\alpha T_0\circ g_0^{n} & T_0'\circ g_0^{n}
\end{pmatrix}\dots
\begin{pmatrix}
T_0'\circ  g_0\\
\partial_\alpha T_0\circ g_0\\
T_0''\circ g_0\\
\partial_\alpha T_0'\circ g_0
\end{pmatrix}
\]
where we calculate $g_0^{m}$ using the shooting method from section \ref{ShootingMethod}.

\end{document}